\tikzset{
  singlearrowreversed/.style={draw=none,postaction={decorate,decoration={markings,mark=at position 0.5 with {\arrowreversed[xshift=-2pt]{stealth}}}}},
  singlearrow/.style={draw=none,postaction={decorate,decoration={markings,mark=at position 0.5 with {\arrow[xshift=2pt]{stealth}}}}},
  doublearrow/.style={draw=none,postaction={decorate,decoration={markings,mark=at position 0.25 with {\arrow{stealth}},mark=at position 0.85 with {\arrow{stealth}}}}},
  triplearrow/.style={draw=none,postaction={decorate,decoration={markings,mark=at position 0.1 with {\arrow{stealth}},mark=at position 0.5 with {\arrow{stealth}},mark=at position 0.9 with {\arrow{stealth}}}}},
  dots/.style={draw=none,postaction={decorate,decoration={markings,mark=at position 0.25 with {\draw[fill=black] (0,0) circle (0.75pt);},mark=at position 0.5 with {\draw[fill=black] (0,0) circle (0.75pt);},mark=at position 0.75 with {\draw[fill=black] (0,0) circle (0.75pt);}}}}
}
\newcommand{\newword}[1]{\textbf{\emph{#1}}}
\DeclareMathOperator{\corank}{corank}
\DeclareMathOperator{\Ext}{Ext}
\DeclareMathOperator{\Hom}{Hom}
\newcommand{\Bdp}{\widetilde{B}_{dp}}
\newcommand{\bfc}{\mathbf{c}}
\newcommand{\bfg}{\mathbf{g}}
\newcommand{\Bpr}{\widetilde{B}_{pr}}
\newcommand{\cA}{\mathcal{A}}
\newcommand{\cC}{\mathcal{C}}
\newcommand{\cE}{\mathcal{E}}
\newcommand{\cF}{\mathcal{F}}
\newcommand{\cN}{\mathcal{N}}
\newcommand{\cP}{\mathcal{P}}
\newcommand{\cQ}{\mathcal{Q}}
\newcommand{\cR}{\mathcal{R}}
\newcommand{\cv}{\alpha}
\newcommand{\cvar}{z}
\newcommand{\dashname}[1]{\stackrel{#1}{\begin{picture}(22,3)\put(0,2.5){\line(1,0){22}}\end{picture}}}
\newcommand{\dol}[1]{\overline{\overline{#1}}}
\newcommand{\grep}{\gv}
\newcommand{\Gr}{\mathrm{Gr}}
\newcommand{\gv}{\omega}
\newcommand{\into}{\hookrightarrow}
\newcommand{\kk}{\Bbbk}
\newcommand{\KQ}{K_0(Q)}
\newcommand{\loopvar}{z}
\newcommand{\NN}{\mathbb{N}}
\newcommand{\ol}[1]{\overline{#1}}
\newcommand{\onto}{\twoheadrightarrow}
\newcommand{\Qdp}{\widetilde{Q}_{dp}}
\newcommand{\Qrep}{M}
\newcommand{\rep}{\operatorname{rep}}
\newcommand{\TT}{\mathbb{T}}
\newcommand{\Zidx}{\ell}
\newcommand{\ZZ}{\mathbb{Z}}
\renewcommand{\mod}[1]{\langle {#1} \rangle}
\newtheorem{theorem}{Theorem}[section]
\newtheorem{conjecture}[theorem]{Conjecture}
\newtheorem{definition}[theorem]{Definition}
\newtheorem{lemma}[theorem]{Lemma}
\newtheorem{proposition}[theorem]{Proposition}
\theoremstyle{remark}
\newtheorem{example}[theorem]{Example}
\newtheorem{remark}[theorem]{Remark}
\numberwithin{equation}{section}
\numberwithin{figure}{section}
\begin{document}
\title{On Generalized Minors and Quiver Representations}

\author[Dylan Rupel]{Dylan Rupel}
\address[Dylan Rupel]{University of Notre Dame}
\email{drupel@nd.edu}

\author[Salvatore Stella]{Salvatore Stella}
\address[Salvatore Stella]{Universit\`a degli studi di Roma ``La Sapienza''}
\email{stella@mat.uniroma1.it}

\author[Harold Williams]{Harold Williams}
\address[Harold Williams]{University of Texas at Austin}
\email{hwilliams@math.utexas.edu}

\begin{abstract}
  The cluster algebra of any acyclic quiver can be realized as the coordinate ring of a subvariety of a Kac-Moody group -- the quiver is an orientation of its Dynkin diagram, defining a Coxeter element and thereby a double Bruhat cell.
  We use this realization to connect representations of the quiver with those of the group.
  We show that cluster variables of preprojective (resp. postinjective) quiver representations are realized by generalized minors of highest-weight (resp. lowest-weight) group representations, generalizing results of Yang-Zelevinsky in finite type.
  In type $A_n^{\!(1)}$ and finitely many other affine types, we show that cluster variables of regular quiver representations are realized by generalized minors of group representations that are neither highest- nor lowest-weight; we conjecture this holds more generally.
\end{abstract}

\maketitle

\tableofcontents
\newpage

\section{Introduction}

Given a finite quiver $Q$ without oriented cycles, there is an associated group $G$: the Kac-Moody group whose Dynkin diagram is the underlying unoriented graph of $Q$.
The representation theory of $Q$ is connected to $G$ in a range of ways; for example, one has the classical result of \cite{BGP73} that if $Q$ is of ADE type, its indecomposable representations are classified by the positive roots of $G$.
The connection which motivates the present work is grounded in the following trichotomies in the representation theories of $G$ and $Q$.

Recall that the representation theory of $G$ is largely centered around its (dual) categories of highest- and lowest-weight representations \cite{Kac90,Kum02}.
These coincide exactly when $G$ is a semisimple algebraic group, in which case they include all irreducible weight representations of $G$. 
In general, $G$ has many other such representations \cite{Kas94}, which are only well-understood when $G$ is of affine type \cite{Cha86,CP88}.
In the affine case, an irreducible representation is highest-weight, lowest-weight, or neither if its level (the character by which $Z(G) \cong \kk^\times$ acts) is positive, negative, or zero, respectively.

On the other hand, the representation theory of $Q$ is organized by the Auslander-Reiten translation $\tau: \rep Q \to \rep Q$ \cite{ASS06}. 
A representation $M$ is said to be preprojective if $\tau^k(M)$ is projective for some $k\ge 0$, postinjective if it is of the form $\tau^k(I)$ for some injective $I$ and some $k\ge0$, and regular otherwise.
Preprojective and postinjective representations coincide exactly when $Q$ is of ADE type, in which case there are no regular representations. When $Q$ does have regular representations, the tame-wild dichotomy implies that they are essentially unclassifiable unless $Q$ is of affine type, in which case they can be explicitly described.

Our goal here is to propose and partially demonstrate a way in which the parallel nature of these classifications reflects finer structural relationships between individual representations of $Q$ and $G$. Specifically, we identify certain generating functions attached to representations of $Q$ with a priori unrelated functions attached to representations of $G$, in such a way that the resulting association between representations of $Q$ and $G$ intertwines the above classifications.

This relationship is naturally couched in the language of cluster algebras, commutative rings equipped with a (partial) canonical basis whose elements are called cluster variables \cite{FZ02}.
From the perspective of $G$, the orientation $Q$ of its Dynkin diagram is equivalent to a choice of Coxeter element $c$ in its Weyl group; the cluster algebra $\cA_Q$ (with suitable coefficients) is the coordinate ring of the double Bruhat cell
\[
  G^{c,c^{-1}} := B_+ c B_+ \cap B_- c^{-1} B_- \subset G,
\]
where $B_+$ and $B_-$ are opposite Borel subgroups \cite{BFZ05,Wil13}.
The varieties $G^{c,c^{-1}}$ generalize the space of tridiagonal matrices with determinant equal to one, which they recover when $G=SL_n$ and $c$ is the standard Coxeter element.

From the perspective of $Q$, $\cA_Q$ is a repository for information about the submodule structure of representations of $Q$.
After embedding $\cA_Q$ into the ring of Laurent polynomials in the initial cluster variables, the remaining cluster variables are in bijection with the rigid indecomposable representations of $Q$ \cite{CC06,CK06}.
This correspondence takes a representation $\Qrep$ to a generating function of the Euler characteristics of its varieties of submodules; the resulting expression is called the cluster character of $\Qrep$ \cite{Pal12}.
With this in mind we refer to a (non-initial) cluster variable as preprojective, postinjective, or regular according to the classification of its associated representation.
In summary, given the description of $\cA_Q$ in terms of $G$, representations of $Q$ naturally label functions on the subvariety $G^{c,c^{-1}}$ of $G$.

When $Q$ is of ADE type and $G$ a simple algebraic group, Yang and Zelevinsky \cite{YZ08} showed that these functions have an interpretation in terms of the representation theory of $G$: they are restrictions of generalized minors.
These are functions $\Delta_{(V,\lambda)} \in \kk[G]$ labelled by a representation $V$ and a weight $\lambda$ for which the weight space $V_\lambda$ is one-dimensional.
The value of $\Delta_{(V,\lambda)}$ on $g \in G$ is the ratio of any nonzero $v \in V_\lambda$ with the projection of $gv \in V$ to $V_\lambda$.

Our main result is that cluster variables are also realized as generalized minors on $G^{c,c^{-1}}$ when $Q$ is not necessarily of finite type, and that moreover the resulting correspondence between quiver representations and group representations intertwines the triadic classifications described above.

\begin{theorem}
  \label{thm:maintheorem}
  Let $Q$ be a quiver without oriented cycles, $\cA_{\Qdp}$ the associated cluster algebra with doubled principal coefficients, and $G^{c,c^{-1}} \subset G$ the associated Coxeter double Bruhat cell.
  \begin{enumerate}
    \item
      The preprojective cluster variables in $\cA_{\Qdp} \cong \kk[G^{c,c^{-1}}]$ are restrictions of generalized minors of highest-weight representations.
      The postinjective cluster variables are restrictions of generalized minors of lowest-weight representations.
    \item
      Suppose $Q$ is an acyclic orientation of an $n$-cycle, so that $G$ is the universal central extension $\widetilde{LSL}_n$ of the loop group of $SL_n$.
      Then the regular cluster variables in $\cA_{\Qdp}\cong \kk[\widetilde{LSL}_n^{c,c^{-1}}]$ are restrictions of generalized minors of level zero representations.
  \end{enumerate}
Moreover, in all cases the natural identification of the $\bfg$-vector lattice of $Q$ with the weight lattice of $G$ takes the $\bfg$-vector of a cluster variable to the weight of the corresponding minor.
\end{theorem}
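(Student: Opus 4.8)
The plan is to prove all three assertions by a single induction along mutation sequences, taking as input (i) the realization of $\cA_{\Qdp}$ as $\kk[G^{c,c^{-1}}]$, (ii) the Caldero--Chapoton/Palu cluster character together with its behavior under mutation, and (iii) the generalized determinantal identities for minors of a Kac--Moody group. One first fixes a distinguished initial seed coming from a reduced word for $c$ (doubled to account for the two Borels), whose frozen variables are the principal minors $\Delta_{(V(\gv_i),\gv_i)}$ together with their $c$-shifts and whose initial mutable variables are again explicit minors of the fundamental representations $V(\gv_i)$ (for the highest-weight side) and of their duals (for the lowest-weight side); under the identification of the $\mathbf{g}$-vector lattice with the weight lattice the grading statement holds tautologically on this seed. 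Since every cluster variable in $\cA_{\Qdp}$ is homogeneous for the $\mathbf{g}$-vector grading and every generalized minor is homogeneous for the residual torus action on $G^{c,c^{-1}}$, and the isomorphism $\cA_{\Qdp}\cong\kk[G^{c,c^{-1}}]$ intertwines these two gradings, the final $\mathbf{g}$-vector assertion follows automatically once the function identities of parts (1) and (2) are established; so the content is entirely in identifying cluster variables with minors.

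\textbf{Preprojective and postinjective variables.} To reach the cluster variable $X_{\tau^{-m}P_i}$ I would apply the mutation sequence that categorifies the iterated inverse Auslander--Reiten translation: repeatedly mutate at the vertices in a source-adapted order, which moves one letter at a time along the one-sided infinite reduced word $ccc\cdots$ for the preprojective component. On the cluster side, the cluster character formula and its compatibility with mutation show that this sequence produces exactly $X_{P_i},X_{\tau^{-1}P_i},X_{\tau^{-2}P_i},\dots$. On the group side, each elementary step is an instance of a generalized determinantal identity relating minors $\Delta_{(V(\gv_j),c^k\gv_j)}$, and this identity is \emph{formally} a cluster exchange relation; since the two recursions share the same initial data, an induction yields $X_{\tau^{-m}P_i}=\Delta_{(V(\gv_i),c^m\gv_i)}\big|_{G^{c,c^{-1}}}$, with the precise labelling dictated by the reduced-word bookkeeping. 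The postinjective case is then obtained by symmetry: the transpose anti-automorphism of $G$ preserves $G^{c,c^{-1}}$ while interchanging the roles of $B_+$ and $B_-$ and sending highest-weight representations to lowest-weight ones, and it is compatible, under the isomorphism $\cA_{\Qdp}\cong\kk[G^{c,c^{-1}}]$, with the vector-space duality $\rep Q\cong(\rep Q^{\mathrm{op}})^{\mathrm{op}}$ that exchanges preprojective and postinjective modules; applying part (1) to $Q^{\mathrm{op}}$ and transporting along this symmetry gives the postinjective statement for $Q$.

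\textbf{Regular variables for the $n$-cycle, and the main obstacle.} For $Q$ an acyclic orientation of the $n$-cycle one has $G=\widetilde{LSL}_n$, and the relevant level-zero representations are the graded loop analogues of the fundamental representations --- exterior powers of the standard module $\kk^n\otimes\kk[z^{\pm1}]$ and their cousins. The first task, and the point where the highest/lowest-weight formalism genuinely breaks down, is to single out for each such $V$ the weights $\lambda$ with $\dim V_\lambda=1$: these are the extremal weights, and one must check they are exactly the ones labelled by the quasi-simple regular modules populating the homogeneous and exceptional tubes of $\rep Q$. Using an explicit ``almost $z$-tridiagonal'' coordinate model of $\widetilde{LSL}_n^{c,c^{-1}}$, I would compute these generalized minors directly and match them against the cluster characters of the regular modules of the cyclic quiver, which are accessible because the submodule lattices over the path algebra of $\widetilde{A}_{n-1}$ are completely understood; the general regular variable then follows by the same mutation induction, mutation within a tube being the evident rotation. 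I expect this last part to be the main obstacle: level-zero representations sit outside the standard machinery, the one-dimensionality of the weight spaces entering the minors must be established by hand, and reconciling the loop-group computation with the tube combinatorics has no counterpart in the acyclic case --- which is also why the analogue for general affine (and, conjecturally, arbitrary) $Q$ is left open, no such explicit model being available there.
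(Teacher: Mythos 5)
Your overall architecture lines up with the paper's for part (1): the preprojective/postinjective variables are identified with minors by matching the exchange relations along the acyclic belt against the generalized determinantal identities (Proposition~\ref{prop:fundid}), and the postinjective side is obtained by a symmetry exchanging the two Borels. The paper uses the specific anti-automorphism $\iota$ with $h^\iota=h^{-1}$, $x_i(t)^\iota=x_i(t)$, $x_{\ol{\imath}}(t)^\iota=x_{\ol{\imath}}(t)$, which \emph{preserves} $B_\pm$ and maps $G^{c,c^{-1}}$ to $G^{c^{-1},c}$ while sending $\Delta_\lambda$ to $\Delta_{-\lambda}$; your ``transpose anti-automorphism that preserves $G^{c,c^{-1}}$ while swapping $B_\pm$'' does not have the highest-to-lowest-weight feature you assign it (that swap comes from inverting $H$, not from exchanging the Borels), so you should double-check which involution you actually need, but the general idea is sound. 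Your claim that the $\bfg$-vector assertion ``follows automatically'' from a grading comparison is a plausible shortcut, but it silently absorbs a computation the paper does explicitly: one still has to verify that the $H$-weight of each frozen variable $z_i$, $z_{\ol\imath}$ matches its $\bfg$-degree (this is essentially Lemma~\ref{lemma:coefficients_values} and the telescoping identity $\beta_i^+ + \sum_{j<i} a_{ji}\beta_j^+ = \alpha_i$), so you haven't actually avoided that work.

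The genuine gap is in part (2). You propose to verify the identification for quasi-simple regular modules directly and then extend to ``the general regular variable'' by ``the same mutation induction, mutation within a tube being the evident rotation.'' This step cannot run: the engine behind the mutation induction in part (1) is Proposition~\ref{prop:fundid}, and that identity requires all minors involved to be semi-invariants of the Borel subgroups and their Weyl conjugates, a property that fails for level zero weights, which lie outside $X\cup -X$. The paper flags exactly this obstruction in its introduction --- the authors do not know any Lie-theoretic interpretation of the exchange relations involving regular cluster variables --- and consequently do not argue by mutation at all in the regular case. Instead, they prove Theorem~\ref{th:cluster character equals minor} by treating \emph{every} rigid regular representation $\Qrep_{[k,\ell]}$ uniformly: they compute its cluster character as a sum over target-closed subsets of $[k,\ell]$ (Proposition~\ref{prop:regular coindices}), compute the level zero minor $\Delta_{\omega_S}$ as a weighted sum over collections of pairwise disjoint paths in the cylindrical network $\cN^{c,c^{-1}}$ via a Lindstr\"om--Gessel--Viennot argument (Proposition~\ref{prop:minorsfrompaths}), and then construct an explicit weight-preserving bijection between the two index sets. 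To close the gap in your proposal you would either need to supply a substitute for the determinantal identity that applies to level zero minors (which is open --- it is the content of Conjecture~\ref{conj:mainconjecture}), or abandon the mutation induction in the regular case and handle each rigid regular $\Qrep_{[k,\ell]}$ by a direct term-by-term match as the paper does.
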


Though above we have taken $G$ to be of symmetric type for simplicity, part (1) also holds in the symmetrizable case upon generalizing from representations of quivers to representations of valued quivers or species.
We emphasize that although the coordinate ring of any double Bruhat cell $G^{u,v}$ is an (upper) cluster algebra, its defining quiver is generally not an orientation of the Dynkin diagram of $G$.
More to the point, in this generality one expects that most cluster variables do not admit an elementary Lie-theoretic description, e.g. as restrictions of minors.

The $\bfg$-vector of a cluster variable \cite{FZ07} is an element of $\ZZ^{|Q_0|}$, referred to as the $\bfg$-vector lattice in this context, which uniquely labels it among all cluster variables. 
In terms of the quiver representation associated to a cluster variable, the $\bfg$-vector records the modules appearing in a minimal injective copresentation.
For acyclic $Q$, we identify $\ZZ^{|Q_0|}$ with the weight lattice of $G$ by taking the initial $\bfg$-vectors to the fundamental weights.
The last part of the Theorem then asserts that this identifies the $\bfg$-vector of any cluster variable with the weight of the associated minor (in finite type this is a result of \cite{YZ08}).

While the proof of (1) is mostly a straightforward generalization of the finite-type case, the proof of (2) demands different methods.
The reason (1) is true is that the preprojective and postinjective cluster variables are determined by an explicit list of relations, and these are implied by certain generalized determinantal identities that hold globally on $G$.
However, these identities depend crucially on the fact that all minors involved are semi-invariants of the standard Borel subgroups and their conjugates.
The minors arising from level zero representations do not have this property and so the relations that determine them cannot be deduced from such identities.

Instead, we study level zero minors on $\widetilde{LSL}_n$ by direct combinatorial means.
We may take the relevant representations to be of the form $\big(\!\bigwedge^{\!k}\kk^n\big)[\loopvar^{\pm 1}]$, hence they inherit natural bases from the standard basis of $\kk^n$.
These bases let us compute the restrictions to $G^{c,c^{-1}}$ of the level zero minors appearing in (2) as weighted sums over collections of pairwise disjoint paths through a directed network on the cylinder, following similar constructions in \cite{GSV12,FM14}.
When $Q$ is an acyclic orientation of an $n$-cycle, its regular representations likewise have a simple combinatorial description, and one can construct by hand a bijection between their subrepresentations and collections of paths in the relevant network.

A deficiency of proving (2) by analyzing each regular cluster variable directly is that we do not know how to interpret the exchange relations relating them to the other cluster variables in terms of the representation theory of $\widetilde{LSL}_n$.
It would be desirable to find such an interpretation, which could also provide an avenue for approaching more general Kac-Moody groups:

\begin{conjecture}
\label{conj:mainconjecture}
  Let $Q$ be a finite (valued) quiver without oriented cycles and $G^{c,c^{-1}} \subset G$ the associated Coxeter double Bruhat cell.
  Then all regular cluster variables in  $\cA_{\Qdp} \cong \kk[G^{c,c^{-1}}]$ are generalized minors of representations that are neither highest- nor lowest-weight, and the weight of this minor is the $\bfg$-vector of the cluster variable.
\end{conjecture}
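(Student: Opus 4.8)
The plan is to interpolate between two endpoints already secured by Theorem~\ref{thm:maintheorem}: the tame and finite cases on one side, and the preprojective/postinjective variables in arbitrary type on the other. I would first extend part~(2) from acyclic orientations of cycles to all affine quivers. For an affine Kac--Moody group $G$ one disposes of an explicit model of the level-zero integrable representations: the fundamental ones are built from evaluation modules $\bigl(\bigwedge^{\!k}\kk^N\bigr)[\loopvar^{\pm1}]$ and their counterparts for the other classical loop groups, with a folding step in the remaining cases. Restricting the associated generalized minors to $G^{c,c^{-1}}$ should then yield, by the network constructions of \cite{GSV12,FM14} specialized as in the proof of Theorem~\ref{thm:maintheorem}(2), a weighted sum over families of pairwise non-crossing paths in a directed network on a cylinder (or its fold). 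On the quiver side, $\rep Q$ has its regular part organized into tubes of bounded rank, so the varieties $\Gr_e(M)$ of subrepresentations of a regular $M$ admit an explicit cellular description; one constructs a weight-preserving bijection between these cells and the relevant path families, exactly as in the cyclic case, to match the cluster character of $M$ with the minor. The assertion that the $\bfg$-vector equals the weight then follows formally: both $X_M$ and the restriction of $\Delta_{(V,\lambda)}$ are homogeneous for the residual torus action on $\kk[G^{c,c^{-1}}]$ — of degree $\bfg(M)$ and $\lambda$ respectively, under the identification of the $\bfg$-vector lattice with the weight lattice — so equality of the two functions forces $\bfg(M)=\lambda$.

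For arbitrary acyclic $Q$ there is neither a classification of regular modules nor a network model, so a term-by-term argument is out of reach; instead I would try to read exchange relations representation-theoretically and induct on mutation distance from the initial seed, with Theorem~\ref{thm:maintheorem}(1) supplying the base case and all non-regular variables. The inductive step is: given an exchange relation $x\,x' = P_+ + P_-$ in which $x$ and the Laurent monomials $P_\pm$ are (products of) restrictions of generalized minors $\Delta_{(V_i,\lambda_i)}$, exhibit a representation $V'$ — neither highest- nor lowest-weight — and a weight $\lambda'$ with $\dim V'_{\lambda'}=1$ such that $\Delta_{(V',\lambda')}$ satisfies the \emph{same} relation on all of $G^{c,c^{-1}}$, with $\lambda'$ the predicted $\bfg$-vector of $x'$. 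The data $(V',\lambda')$ are essentially forced: $\lambda'=\bfg(x')$, and $V'$ should be the $G$-module whose weight-polytope/crystal combinatorics reproduce the $F$-polynomial of $x'$ (in affine type, a Kirillov--Reshetikhin-type module); a natural candidate is a sub- or subquotient module of a ``fusion'' of the ingredient modules $V_i$, mirroring the short exact sequences that govern exchange relations in monoidal categorifications.

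The crux, and the step I expect to be the main obstacle, is establishing the needed three-term identity among the $\Delta_{(V_i,\lambda_i)}$ and $\Delta_{(V',\lambda')}$ \emph{globally} on $G$: as the paper stresses, this cannot come from the classical generalized-minor identities, which rely on all minors being semi-invariants of Borel conjugates, a property that fails beyond highest/lowest weight. Two routes seem worth pursuing. One is to recognize the regular cluster variables of $\cA_{\Qdp}$ as cluster variables of a possibly larger cluster structure on $\kk[G^{c,c^{-1}}]$ whose own exchange relations \emph{are} the sought identities, reducing the problem to comparing two cluster structures together with a compatibility of initial seeds. The other is to realize each identity as a Pl\"ucker-type relation inside an infinite-dimensional partial flag variety attached to $V'$, obtained by decomposing matrix coefficients of a tensor product along the image of the coproduct. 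A subsidiary point to settle along the way is the ``neither highest- nor lowest-weight'' dichotomy for $V'$ — equivalently that $V'$ lies in neither the highest-weight nor the lowest-weight category — which in affine type is exactly the level-zero condition and in general should follow from where regular $\bfg$-vectors sit relative to the Tits cone and its negative; making this precise, and supplying the identities, are where I expect the real difficulty to lie.
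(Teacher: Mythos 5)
The statement you are addressing is Conjecture~\ref{conj:mainconjecture}, and the paper does not prove it; it only establishes partial cases, namely type $A_{n-1}^{(1)}$ in full (Theorem~\ref{th:cluster character equals minor}, via the cylinder-network bijection) together with a handful of other affine types checked by direct computation (Proposition~\ref{prop:conjholds}). Your write-up is accordingly a research program rather than a proof, and you say as much: you flag both the extension of the network argument beyond acyclic orientations of cycles and, more seriously, the existence of global three-term identities among level-zero minors as places ``where I expect the real difficulty to lie.'' Those gaps are genuine, and they are precisely the ones the paper itself acknowledges as open: the authors state explicitly that they ``do not know how to interpret the exchange relations relating them to the other cluster variables in terms of the representation theory of $\widetilde{LSL}_n$,'' and that beyond affine type the conjecture is ``much more speculative.'' Your inductive strategy --- read each exchange relation representation-theoretically, produce a fusion module $V'$ of the expected extremal weight, and establish the resulting identity globally on $G$ --- is exactly the interpretation the authors were unable to find, and you do not supply it either. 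In particular, the key obstruction the paper identifies, that Proposition~\ref{prop:fundid}-type identities hinge on minors being semi-invariants of Borel conjugates (a property that fails for level-zero minors), is named in your proposal but not overcome: neither of your two suggested routes (a larger cluster structure on $\kk[G^{c,c^{-1}}]$, or Pl\"ucker-type relations in an infinite partial flag variety) is carried out.

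Two smaller points. First, the paper's actual partial proofs sidestep the exchange-relation question entirely: in type $A_{n-1}^{(1)}$ each regular cluster variable is matched to its minor one at a time via the bijection $\pi$ between target-closed subsets and path families, and in types $B_3^{(1)}$, $C_2^{(1)}$, $D_4^{(1)}$, $G_2^{(1)}$ the finitely many regular variables are compared by explicit computation of both sides. There is therefore no inductive argument in the paper for yours to be compared against. Second, your observation that once $x_M = \Delta_{(V,\lambda)}$ is established the equality $\bfg(M)=\lambda$ follows from homogeneity under the torus is consistent with the conventions built into Theorem~\ref{thm:coordring} and Remark~\ref{rem:cgrootsweights}, but it is downstream of the real difficulty, which is producing $V$ and $\lambda$ and proving the identity of functions in the first place.
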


In addition to verifying the conjecture for all Coxeter elements when $G$ is of type $A_n^{(1)}$, we check a finite list of other affine types, see Section~\ref{sec:othertypes}.
In any affine type, the regular cluster variables are finite in number and can be explicitly computed. 
In work of the second author it is shown that their $\bfg$-vectors correspond to weights in the complement of the Tits cone and its negative \cite{RS16}.
In particular, it follows that if these cluster variables are restrictions of minors, they must come from explicitly identifiable level zero representations.
The types checked in Section~\ref{sec:othertypes} are essentially the ones where the relevant minors on $G^{c,c^{-1}}$ can be computed directly from the characters of these representations and compared with the corresponding cluster variables. 
Beyond finite and affine types, general weight representations of $G$ are not well-understood and the conjecture is much more speculative (though we verify a related result for all rank 2 Kac-Moody groups in Theorem~\ref{thm:homogeneous}).

Finally, in type $A_{n}^{\!(1)}$ we also prove the following extension of Theorem~\ref{thm:maintheorem}.
There is a one-parameter family of quiver representations whose dimension vector is the primitive imaginary root.
The cluster character of a generic element of this family is not a cluster variable, but is an element of the generic basis \cite{Dup12} (and we expect of other standard canonical bases as well \cite{BZ14,GHKK14}).
We show that this element too is a level zero generalized minor; with this in mind it is natural to further ask which canonical basis elements other than cluster variables can be realized as generalized minors.

\textsc{Acknowledgments}
We would like to thank Giovanni Cerulli Irelli, Vyjayanthi Chari, Alessandro D'Andrea, Corrado de Concini, Anna Felikson, Michael Gekhtman, Bernard Leclerc, Nathan Reading and Pavel Tumarkin for helpful discussions and comments.
D.R. is partially supported by an AMS-Simons Travel Grant.
S.S. is a Marie Curie fellow of the Istituto Nazionale di Alta Matematica.
H.W. is supported by an NSF Postdoctoral Research Fellowship DMS-1502845.

\section{Background on minors, clusters, and quivers}

We collect here some needed material on cluster algebras and representation theory, and lay out the notation we will use.
We fix throughout an algebraically closed field $\kk$ of characteristic zero.

\subsection{Kac-Moody groups and generalized minors}
\label{sec:group background}

For any symmetrizable Cartan matrix $A$ there is an associated Kac-Moody group $\widehat{G}:=\widehat{G}_A$.
It has several variants, and we consider the minimal version studied in \cite{KP83} and \cite[Section 7.4]{Kum02}.
This is an ind-algebraic group whose derived subgroup $G$ is generated by coroot subgroups $\varphi_i: SL_2 \into G$ for $1 \leq i \leq n$.
Starting from $G$, the larger group $\widehat{G}$ is obtained as a semidirect product with an algebraic torus of dimension equal to the corank of $A$.

When $A$ is of finite type, $G \cong \widehat{G}$ is a simply-connected semisimple algebraic group.
When $A$ is of (untwisted) affine type, $G$ can be identified with the universal central extension of the group $LG^\circ$ of algebraic loops (that is, regular maps from $\kk^\times$) into some simply-connected semisimple algebraic group $G^\circ$; in this case $\widehat{G}$ is obtained by adjoining a factor of $\kk^\times$ acting by loop rotations.

For $t \in \kk^\times$ and $1 \leq i \leq n$ we adopt the notation
\begin{gather}
  x_{i}(t):=\varphi_i\begin{pmatrix} 1 & t \\ 0 & 1\end{pmatrix}
  \quad
  t^{\alpha_i^\vee}:=\varphi_i\begin{pmatrix}t & 0 \\ 0 & t^{-1}\end{pmatrix}
  \quad
  x_{\ol{\imath}}(t):=\varphi_i\begin{pmatrix} 1 & 0 \\ t & 1\end{pmatrix}
  \\
  \ol{s_{i}} := \varphi_i \begin{pmatrix} 0 & -1 \\ 1 & 0 \end{pmatrix}
  \quad
  \dol{s_{i}} := \varphi_i \begin{pmatrix} 0 & 1 \\ -1 & 0 \end{pmatrix}.
\end{gather}
By extension, for any $w$ in the Weyl group $W$ of $\widehat{G}$ we set \[\ol{w} := \ol{s_{i_1}}\cdots\ol{s_{i_k}}, \quad \quad \dol{w} := \dol{s_{i_1}}\cdots\dol{s_{i_k}},\] where $s_{i_1}\cdots s_{i_k}$ is any reduced word for $w$.
We write $H$ for the ($n$-dimensional) Cartan subgroup of $G$, likewise $\widehat{H}$ is the Cartan subgroup of $\widehat{G}$ (it is isomorphic to $H \times (\kk^\times)^{\corank(A)}$).
We let $\kk[\widehat{G}]$ denote the (complete topological) algebra of regular functions on $\widehat{G}$.

We have weight lattices $\widehat{P} = \Hom(\widehat{H},\kk^\times)$ and $P = \Hom(H,\kk^\times)$, and write the value of $\lambda \in P$ on $h \in H$ as $h^\lambda$ (likewise for $\lambda \in \widehat{P}$ and $h \in \widehat{H}$).
The fundamental weights $\omega_1,\dotsc,\omega_n \in P$ are defined by $(t^{\alpha_i^\vee})^{\omega_j} = t^{\delta_{ij}}$.
The inclusion $H \into \widehat{H}$ induces a projection $\widehat{P} \onto P$, and we fix once and for all a splitting $P \into \widehat{P}$ (this is equivalent to fixing an isomorphism $\widehat{G} \cong G \rtimes (\kk^\times)^{\corank(A)}$).
Note that while $\widehat{P} \onto P$ is $W$-equivariant, the splitting will generally not be.

Generalized minors are certain functions on $\widehat{G}$, recovering the usual notion of minors when $\widehat{G} \cong SL_n$.
The general definition involves the representation theory of $\widehat{G}$.
By a \newword{weight representation} $V$ of $\widehat{G}$ we will always mean an algebraic representation that splits as a direct sum
\[
  V = \bigoplus_{\lambda \in \widehat{P}} V_\lambda
\]
of finite-dimensional weight spaces under the action of $\widehat{H}$.
Note that the weight spaces of $V$ as a $G$-representation are not necessarily finite-dimensional, as the weight decomposition of the $\widehat{H}$-action is finer than that of the $H$-action.

A weight $\lambda \in \widehat{P}$ is \newword{dominant} if $(t^{\alpha_i^\vee})^\lambda$ is a nonnegative power of $t$ for all $1 \leq i \leq n$, and we denote the set of dominant weights by $\widehat{P}^+$.
To each $\lambda \in \widehat{P}^+$ is associated an irreducible highest-weight representation $V(\lambda)$ of $\widehat{G}$.
Similarly, for each $\lambda \in -\widehat{P}^+$ there is an irreducible lowest-weight representation $V(\lambda)$ of $\widehat{G}$.
For any $w\in W$ and $\lambda\in\pm\widehat{P}^+$, the weight space $V(\lambda)_{w\lambda}$ is one-dimensional and thus we also write $V(w\lambda)$ for the representation $V(\lambda)$.
The \newword{Tits cone} $$X = \{w\lambda : w \in W, \lambda \in \widehat{P}^+\}$$ is the set of weights which can be obtained from a dominant weight by the action of the Weyl group.
Thus each weight $\lambda\in X \cup -X$ determines an irreducible (highest- or lowest-) weight representation $V(\lambda)$ of $\widehat{G}$ whose weight space $V(\lambda)_\lambda$ is one-dimensional.

More generally, when $\lambda$ is not necessarily in $X \cup -X$ we can consider representations $V$ of $\widehat{G}$ for which $\lambda$ is extremal; that is, $V_\lambda$ and all of its Weyl group conjugates are highest- or lowest-weight spaces for each coroot subgroup $\varphi_i(SL_2)$. For arbitrary $\lambda$ an example of such a representation is contructed in \cite{Kas94}, but only in affine types are a classification and explicit realization available \cite{Cha86,CP88}.

Specifically, when $A$ is of (untwisted) affine type an identification of $G$ with a central extension of $LG^\circ$ fixes a splitting $P \cong P^\circ \oplus \ZZ \kappa$, where $P^\circ$ is the weight lattice of the underlying semisimple group $G^\circ$ and $\kk \kappa = \Hom( Z(G),\kk^\times)$.
Above this we have the splitting $\widehat{P} \cong P \oplus \ZZ \delta$, where $\delta$ is a character of the $\kk^\times$ acting by loop rotations.
A weight $\lambda \in \widehat{P}$ is in the complement of $X \cup -X$ if and only if it lies in $P^\circ \oplus \ZZ \delta$.
Following the above notation, for $\lambda^\circ\in P^\circ$ we have an irreducible $G^\circ$-representation $V(\lambda^\circ)$ whose highest weight is conjugate to $\lambda^\circ$ under the Weyl group of $G^\circ$.
With this in mind when $G$ is affine and $\lambda=\lambda^\circ+r\delta\in \widehat{P} \smallsetminus(X \cup -X)$ we set $V(\lambda) := V(\lambda^\circ)[\loopvar^{\pm 1}]$, the space of algebraic loops in $V(\lambda^\circ)$.

The \newword{level} of an irreducible $\widehat{G}$-representation is the character by which $Z(G) = Z(\widehat{G}) \cong \kk^\times$ acts; the level of $V(\lambda)$ is positive if $\lambda \in X$, negative if $\lambda \in -X$, and zero if $\lambda \in \widehat{P} \smallsetminus(X \cup -X)$.

\begin{definition}
  \label{def:minors}
  Let $V$ be a weight representation of $\widehat{G}$ and $\lambda \in \widehat{P}$ a weight for which the weight space $V_\lambda$ is one-dimensional.
  The principal generalized minor $\Delta_{(V,\lambda)} \in \kk[\widehat{G}]$ is the function
  \[
    g \mapsto \pi_\lambda(gv_\lambda)/v_\lambda,
  \]
  where $\pi_\lambda:V \onto V_\lambda$ is the orthogonal projection and $v_\lambda \in V_\lambda$ is any nonzero vector.

  When $\lambda \in X \cup -X$, or when $\widehat{G}$ is of untwisted affine type and $\lambda$ is arbitrary, we simply write $\Delta_\lambda$ for $\Delta_{(V(\lambda),\lambda)}$.
  When $\lambda \in \widehat{P}^+$ and $u,v\in W$, the generalized minor $\Delta^{u\lambda}_{v\lambda}$ is the function $g \mapsto \Delta_\lambda(\ol{u}^{-1}g\ol{v})$.
  If $\lambda \in -\widehat{P}^+$ and $u,v\in W$, then $\Delta^{u\lambda}_{v\lambda}$ is the function $g \mapsto \Delta_\lambda(\dol{u}^{-1}g\dol{v})$.
\end{definition}

In other words, if we use a weight basis of $V$ to write the action of $g$ as a matrix, $\Delta_{(V,\lambda)}(g)$ is the diagonal entry corresponding to $V_\lambda$.
Similarly, $\Delta^{u\lambda}_{v\lambda}(g)$ is the matrix entry whose row corresponds to $V_{v\lambda}$ and whose column corresponds to $V_{u\lambda}$ (up to an overall scalar fixed by the chosen representatives of $u$, $v \in G$).
While we have apparently given two definitions of $\Delta^{u\lambda}_{v\lambda}$ in finite type (depending on whether we regard $u\lambda$ and $v\lambda$ as conjugate to a dominant or anti-dominant weight), the reader may check that these definitions agree in this case.

\begin{remark}
  When $\widehat{G}$ is of (untwisted) affine type, the representations $V(\lambda)$ with $\lambda \in \widehat{P}\smallsetminus(X \cup - X)$ are not the only irreducible level zero weight representations.
  The set of all such representations is parametrized by pairs $(\{\lambda_1,\dotsc,\lambda_m\},\{a_1,\dotsc,a_m\})$ given by a tuple of dominant weights of $G^\circ$ and a tuple of elements of $\kk^\times$ \cite{CP86}.
  As a vector space the associated representation is the space of Laurent polynomials valued in the tensor product of the $V(\lambda_i^\circ)$, and the scalars $a_i$ rescale the action of $G^\circ$-valued loops on each factor.
\end{remark}

A crucial feature of generalized minors is the following generalized determinantal identity:

\begin{proposition}\cite{FZ99,Wil13}
  \label{prop:fundid}
  Suppose $u,v \in W$ and $1 \leq i \leq n$ are such that $\ell(u)<\ell(us_i)$ and $\ell(v)<\ell(vs_i)$.
  Then
  \begin{equation}
    \label{eq:fundid}
    \Delta_{v\omega_i}^{u\omega_i} \Delta_{vs_i\omega_i}^{us_i\omega_i}
    =
    \prod_{\stackrel{1\leq j \leq n}{j\neq i}}\left(\Delta_{v\omega_j}^{u\omega_j}\right)^{-a_{ji}}
    +
    \Delta_{vs_i\omega_i}^{u\omega_i} \Delta_{v\omega_i}^{us_i\omega_i}
  \end{equation}
  is satisfied on $G$.
\end{proposition}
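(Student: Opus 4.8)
The plan is to strip off the Weyl‑group elements by translation, reducing to the case $u=v=e$, and then to recognize both sides of \eqref{eq:fundid} as one and the same principal generalized minor, of weight $\mu:=2\omega_i-\alpha_i$.

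\emph{Reduction to $u=v=e$.} The length hypotheses are exactly what make this work: the assumption $\ell(us_i)>\ell(u)$ means a reduced word for $u$ followed by $s_i$ is reduced for $us_i$, so $\ol{us_i}=\ol{u}\,\ol{s_i}$, and likewise $\ol{vs_i}=\ol{v}\,\ol{s_i}$. Setting $h:=\ol{u}^{-1}g\,\ol{v}$ and using $\Delta^{x\omega_j}_{y\omega_j}(g)=\Delta_{\omega_j}(\ol{x}^{-1}g\,\ol{y})$, each term of \eqref{eq:fundid} turns into the corresponding term evaluated at $h$ with $u=v=e$; for instance $\Delta^{us_i\omega_i}_{vs_i\omega_i}(g)=\Delta_{\omega_i}(\ol{s_i}^{-1}h\,\ol{s_i})=\Delta^{s_i\omega_i}_{s_i\omega_i}(h)$ and $\Delta^{us_i\omega_i}_{v\omega_i}(g)=\Delta^{s_i\omega_i}_{\omega_i}(h)$. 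Since $g\mapsto\ol{u}^{-1}g\,\ol{v}$ is a bijection of $G$, it suffices to prove, as an identity of regular functions on $G$,
\[
  \Delta^{\omega_i}_{\omega_i}\,\Delta^{s_i\omega_i}_{s_i\omega_i}-\Delta^{s_i\omega_i}_{\omega_i}\,\Delta^{\omega_i}_{s_i\omega_i}=\prod_{j\neq i}\Delta_{\omega_j}^{-a_{ji}}.
\]

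\emph{Both sides as minors of weight $\mu=2\omega_i-\alpha_i=\omega_i+s_i\omega_i$.} The weight spaces $V(\omega_i)_{\omega_i}$ and $V(\omega_i)_{s_i\omega_i}$ are one-dimensional, spanned by $v_{\omega_i}$ and by $f_iv_{\omega_i}=\ol{s_i}v_{\omega_i}$ (with $f_i$ the lowering operator of $\varphi_i$). Unwinding the $\ol{s_i}$'s, the four functions on the left are---in suitable order, possibly up to transposition of the array---the entries of the matrix through which $g$ acts between these two weight vectors inside $V(\omega_i)$; hence the displayed $2\times 2$ determinant is the coefficient of $v_{\omega_i}\wedge f_iv_{\omega_i}$ in $g\cdot(v_{\omega_i}\wedge f_iv_{\omega_i})$, i.e.\ it equals $\Delta_{(\bigwedge^2 V(\omega_i),\,\mu)}$. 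On the right, $\prod_{j\neq i}\Delta_{\omega_j}^{-a_{ji}}=\Delta_{(W,\mu)}$ where $W=\bigotimes_{j\neq i}V(\omega_j)^{\otimes(-a_{ji})}$ (a genuine weight representation, as $-a_{ji}\geq 0$) with distinguished vector $\bigotimes_{j\neq i}v_{\omega_j}^{\otimes(-a_{ji})}$ of weight $\sum_{j\neq i}(-a_{ji})\omega_j$; this equals $\mu$ because $\alpha_i=\sum_j a_{ji}\omega_j$ with $a_{ii}=2$, while $s_i\omega_i=\omega_i-\alpha_i$.

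\emph{Conclusion.} Now invoke the principle that $\Delta_{(V',\mu)}$ depends only on $\mu$ whenever $\mu$ is dominant and $\dim V'_\mu=1$: such a weight space is annihilated by every $e_k$, so its nonzero vector generates an integrable highest-weight submodule, which for dominant highest weight is irreducible, hence isomorphic to $V(\mu)$; since $g$ preserves this submodule, the $\mu$-component of $gv_\mu$ is computed inside it, giving $\Delta_{(V',\mu)}=\Delta_\mu$. It then remains to verify: $\mu=2\omega_i-\alpha_i$ is dominant, since $\langle 2\omega_i-\alpha_i,\alpha_k^\vee\rangle=2\delta_{ik}-a_{ki}\geq 0$; $\mu$ has multiplicity one in $W$ (tensor of highest-weight vectors) and in $\bigwedge^2 V(\omega_i)$ (a simple root being indecomposable in the root lattice, the only splitting of $\mu$ into two distinct weights of $V(\omega_i)$ is $\omega_i+s_i\omega_i$, each summand extremal hence one-dimensional); and $v_{\omega_i}\wedge f_iv_{\omega_i}$ is a highest-weight vector, a one-line $\mathfrak{sl}_2$ computation: $e_j(v_{\omega_i}\wedge f_iv_{\omega_i})=0$ for $j\neq i$ since $[e_j,f_i]=0$ and $e_jv_{\omega_i}=0$, while $e_i(v_{\omega_i}\wedge f_iv_{\omega_i})=v_{\omega_i}\wedge e_if_iv_{\omega_i}=v_{\omega_i}\wedge v_{\omega_i}=0$. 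Thus both sides equal $\Delta_\mu$.

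I expect the only real friction to be bookkeeping: pinning down the normalizations (in particular $f_iv_{\omega_i}=\ol{s_i}v_{\omega_i}$ \emph{exactly}, not just up to scalar) so the $2\times 2$ determinant is precisely a minor of $\bigwedge^2 V(\omega_i)$ rather than that minor times a stray constant, and citing the two standard but nontrivial Kac--Moody inputs---one-dimensionality of extremal weight spaces such as $V(\omega_i)_{s_i\omega_i}$, and irreducibility of a cyclic integrable highest-weight module of dominant highest weight. In the generality of \cite{Wil13} one should also note at the outset that $\bigwedge^2 V(\omega_i)$ and $\bigotimes_{j\neq i}V(\omega_j)^{\otimes(-a_{ji})}$ are weight representations of $\widehat{G}$ with finite-dimensional weight spaces, so that all functions in play are honest elements of $\kk[\widehat{G}]$ and equalities among them may be verified through these identifications.
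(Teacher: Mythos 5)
The paper does not prove Proposition~\ref{prop:fundid}; it is stated with a citation to \cite{FZ99,Wil13}, so there is no in-text argument to compare against. Judged on its own, your proof is correct and is essentially the standard argument from the cited references: reduce to $u=v=e$ by translation (valid because the length hypotheses give $\ol{us_i}=\ol{u}\,\ol{s_i}$ and $\ol{vs_i}=\ol{v}\,\ol{s_i}$), then recognize both sides as the generalized minor of the dominant weight $\mu=2\omega_i-\alpha_i=\omega_i+s_i\omega_i$. Your computations check out: the normalization $\ol{s_i}v_{\omega_i}=f_iv_{\omega_i}$ holds exactly since $\langle\omega_i,\alpha_i^\vee\rangle=1$; the only two-term decomposition of $\mu$ into weights of $V(\omega_i)$ is $\omega_i+s_i\omega_i$; $\mu=\sum_{j\neq i}(-a_{ji})\omega_j$ since $\alpha_i=\sum_j a_{ji}\omega_j$; and the submodule generated by a highest-weight vector of dominant weight in an integrable module is isomorphic to $V(\mu)$, so the minor may be computed there.

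One small expository caveat: the ``principle'' as you phrase it --- that $\Delta_{(V',\mu)}$ depends only on $\mu$ whenever $\mu$ is dominant and $\dim V'_\mu=1$ --- is not literally true; one additionally needs $V'_\mu$ to be a highest-weight line (annihilated by all $e_k$), which does not follow from dominance plus one-dimensionality alone. You do verify this explicitly in both cases (trivially for the tensor product of highest-weight vectors, and by the short $\mathfrak{sl}_2$ computation for $v_{\omega_i}\wedge f_iv_{\omega_i}$, or alternatively by noting $\mu+\alpha_k$ is never a weight of $\bigwedge^2 V(\omega_i)$), so the proof stands; it would just be cleaner to fold the highest-weight condition into the statement of the principle rather than presenting it as a consequence of dominance.
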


While the above identity on $G$ is the one we use directly, we note that it restricts from a similar one on $\widehat{G}$ that also involves the characters of $\widehat{G}/G$.

\subsection{Cluster algebras and $\bfg$-vectors}
\label{sec:clusteralgebras}
An $n\times n$ matrix $B$ is said to be \newword{skew-symmetrizable} if it can be transformed to a skew-symmetric matrix through multiplication by a diagonal matrix.
For $m \geq n$, the \newword{principal part} of an $m \times n$ matrix is the submatrix formed by its first $n$ rows.
An \newword{exchange matrix} $\widetilde B$ is an $m\times n$ integer matrix whose principal part $B$ is skew-symmetrizable.
We will refer to such a $\widetilde B$ as an \newword{extension} of $B$.

Two $m\times n$ exchange matrices $\widetilde B=(b_{ij})$ and $\widetilde B'=(b'_{ij})$ are related by \newword{matrix mutation} at $k \in \{1,\dotsc,n\}$ if their entries satisfy
\begin{equation}
  \label{eq:matrix mutation}
  b'_{ij} = \begin{cases}
  -b_{ij} & i = k \text{ or } j = k\\
  b_{ij} + [b_{ik}]_+ [b_{kj}]_+ - [-b_{ik}]_+ [-b_{kj}]_+ & \text{otherwise.}
  \end{cases}
\end{equation}
Here and elsewhere we write $[a]_+$ for $\max(a,0)$.
Note that mutation is an involution and defines an equivalence relation on exchange matrices.

To define the cluster algebra $\cA_{\widetilde B}$ associated to an $m \times n$ exchange matrix $\widetilde B$, we begin with an infinite $n$-ary tree $\TT$ with edges labelled by $\{1,\dotsc,n\}$ so that the edges incident to any given vertex have distinct labels.
Fix a root vertex $t_0 \in\TT$.
Assign an exchange matrix $\widetilde B^t=(b_{ij}^t)$ to each $t \in\TT$ so that:
\begin{itemize}
  \item 
    $\widetilde B^{t_0} = \widetilde B$,
  \item 
    if $t, t' \in\TT$ are joined by an edge labelled $k$, then $\widetilde B^t$ and $\widetilde B^{t'}$ are related by mutation at $k$.
\end{itemize}

Let $\cF$ denote the field of rational functions in formal variables $x_{1}\,\dotsc,x_{m}$ with coefficients in $\kk$.
The \newword{cluster variables}
\[
  \big\{ x_{i;t}: i \in [1,m], t \in\TT\big\} \subset \cF
\]
are defined recursively as follows.
The \newword{initial cluster variables} $x_{i;t_0}$ are taken to be the generators $x_i$.
If $t,t' \in \TT$ are joined by an edge labelled $k$, then $x_{i;t} = x_{i;t'}$ for $i \neq k$ and $x_{k;t}$, $x_{k;t'}$ are related by the \newword{exchange relation}
\[
  x_{k;t}x_{k;t'}
  =
  \prod_{b^t_{ik}>0}x_{i;t}^{b^t_{ik}}
  +
  \prod_{b^t_{ik}<0}x_{i;t}^{-b^t_{ik}}.
\]
The variables $x_{n+1;t},\dotsc, x_{m;t}$ do not depend on $t$ and hence are referred to as \newword{frozen variables}.
We also refer to variables $x_{i;t}$ for $i\in[1,n]$ as \newword{mutable}.

\begin{definition}[\cite{FZ02}]
  The (geometric type) \newword{cluster algebra} $\cA_{\widetilde B}$ is the $\kk$-subalgebra of $\cF$ generated by the set of all cluster variables.
\end{definition}

We say a skew-symmetrizable matrix is \newword{acyclic} if it can be conjugated by a permutation matrix so that the entries above the diagonal all become nonnegative.
In other words, $B$ is acyclic if there is a permutation $\sigma \in S_n$ such that $b_{\sigma_i \sigma_j} \geq 0$ when $i < j$.
This terminology is justified by the skew-symmetric case: the data of a skew-symmetric matrix $B$ is the same as that of a quiver $Q := Q_B$ with vertices indexed by the columns of $B$ and $[-b_{ij}]_+$ arrows from vertex $i$ to vertex $j$; the skew-symmetric matrix is acyclic exactly when this quiver has no oriented cycles.
We call an exchange matrix acyclic if its principal part is acyclic.

\begin{remark}
  Just as a skew-symmetric matrix $B$ can be encoded as a quiver $Q$, an extension $\widetilde{B}$ of $B$ can be encoded as a \newword{framed quiver} $\widetilde{Q}$.
  This means a quiver obtained from $Q$ by adjoining $m -  n$ \newword{frozen vertices}.
  These have no arrows between them and the arrows connecting them to the original vertices are determined in the evident way by the non-principal part of $\widetilde{B}$.
  Given a framed quiver $\widetilde{Q}$, we simply write $\cA_{\widetilde{Q}}$ for the cluster algebra associated to its signed adjacency matrix $\widetilde{B}$ as above.
\end{remark}

\begin{figure}
  \centering
  \begin{tikzpicture}
    \node (l) [matrix] at (9,0) {
      \coordinate (a) at (9,0);
      \coordinate (b) at (11,0);
      \coordinate (c) at (10,-1.3);
      \foreach \c in {a,b,c} {\fill (\c) circle (.06);}
      \foreach \c/\d in {a/b,c/b,c/a} {\draw[thick,-stealth',shorten <=2mm,shorten >=2mm] (\c) to (\d);}
      \node (alabel) at ($(a)+(-.3,.1)$) {1};
      \node (blabel) at ($(b)+(.3,.1)$) {2};
      \node (clabel) at ($(c)+(0,-.35)$) {3};
    \\
    };
    \node (Q) at (7.7,0) {$Q =$};
    \node (m) at (0,0) {$B = \begin{bmatrix} 0 & -1 & 1 \\ 1 & 0 & 1 \\ -1 & -1 & 0 \end{bmatrix}$};
    \node (r) at (4.5,0) {$A = \begin{bmatrix} 2 & -1 & -1 \\ -1 & 2 & -1 \\ -1 & -1 & 2 \end{bmatrix}$};
  \end{tikzpicture}
  \caption{
    An acyclic skew-symmetric matrix $B$, its Cartan companion $A$, and the corresponding quiver $Q$.
    The vertex labels of $Q$ correspond to the row/column labels of $B$ and $A$, and the Coxeter element associated to $B$ (equivalently, to $Q$) is $c = s_2 s_1 s_3$.
  }
  \label{fig:matrices}
\end{figure}

Yet another way of encoding an acyclic skew-symmetrizable matrix $B$ is a symmetrizable Cartan matrix $A$ (the \newword{Cartan companion} of $B$) together with a Coxeter element $c$ of its Weyl group.
The Cartan matrix is obtained from $B$ by making all its entries negative without changing their absolute values, then setting each diagonal entry equal to two; see Figure~\ref{fig:matrices}.
The Coxeter element is $c = s_{\sigma_1}\cdots s_{\sigma_n}$, where $\sigma$ is any permutation with $b_{\sigma_i \sigma_j} \geq 0$ when $i < j$ as above (choosing a different permutation with this property defines the same Coxeter element via a different reduced word).
Conversely, a Cartan matrix together with a Coxeter element $c$ determines a skew-symmetrizable matrix $B$ by designating signs for the off-diagonal entries according to any reduced word for $c$.

For acyclic cluster algebras we will be especially interested in the following sequence of vertices in $\TT$ and the cluster variables associated to them.
Letting $t_0 \in \TT$ denote a fixed root vertex as before, we fix a permutation $\sigma$ as above and write $\{t_\Zidx\}_{\Zidx \in \ZZ} \subset \TT$ for the vertices along the unique path with edges labelled as follows:
\begin{equation}
  \cdots
  \dashname{\sigma_n}
  t_{-n}
  \dashname{\sigma_1}
  \cdots
  \dashname{\sigma_{n-1}}
  t_{-1}
  \dashname{\sigma_n}
  t_0
  \dashname{\sigma_1}
  t_1
  \dashname{\sigma_2}
  \cdots
  \dashname{\sigma_n}
  t_n
  \dashname{\sigma_1}
  \cdots
  \label{eq:path}
\end{equation}
It is straightforward to check that this is a subpath in the \newword{acyclic belt} of $\TT$, the full subgraph whose set of vertices consists of those $t \in \TT$ for which $\widetilde{B}^t$ is acyclic.

While the vertex $t_\ell$ associated to $\Zidx \in \ZZ$ in general depends on the choice of $\sigma$, the set
\begin{equation}
  \label{eq:cluster_variables_on_path}
  \{x_{i;t_\Zidx}: i \in [1,n], \Zidx \in \ZZ\}
\end{equation}
of cluster variables appearing along the above path does not.
In fact, \eqref{eq:cluster_variables_on_path} is equal to the set of all cluster variables $x_{i;t}$ for which $\widetilde{B}^t$ is acyclic \cite[Corollary 4]{CK06} (while the proof is only written for the skew-symmetric case, the same argument can be applied in the skew-symmetrizable case).
We will refer to the non-initial cluster variables $x_{i;t_\Zidx}$ with $\Zidx>0$ (resp. $\Zidx<0$) as \newword{preprojective} (resp. \newword{postinjective}), and to a cluster variable that does not appear along the path \eqref{eq:path} as \newword{regular}; this terminology is justified by Proposition~\ref{prop:ccvsacyclicbelt}.
We note that when $n=2$ there are no regular cluster variables.

For each $t \in\TT$ and $1 \leq i \leq n$ there is an auxiliary pair of integer vectors depending only on the principal part $B$: the \newword{$\bfc$-vector} $\cv_{i;t} \in \ZZ^n$ and the \newword{$\bfg$-vector} $\gv_{i;t} \in \ZZ^n$ \cite{FZ07}.
We write $C^B_t$ and $G^B_t$ for the matrices whose $i$-th columns are $\cv_{i;t}$ and $\gv_{i;t}$, respectively.
To define these vectors, consider the exchange matrix
\[
  \Bpr := \begin{bmatrix} B \\ Id_n \end{bmatrix},
\]
which is said to have \newword{principal coefficients}.
For $t\in\TT$, if $\Bpr^t$ is the exchange matrix obtained by iterated mutation from $\Bpr^{t_0}=\Bpr$, then $C^B_t$ is defined to be the bottom $n \times n$ submatrix of $\Bpr^t$.

Given the matrix of $\bfc$-vectors at $t\in\TT$, we may define the matrix of $\bfg$-vectors at the same vertex $t$ by
\begin{equation}
  \label{eq:g-vectors_from_c-vectors}
  (G^B_t)^T = (C_t^{-B^T})^{-1},
\end{equation}
where $X^T$ denotes the transpose of a matrix $X$.
Note that this is not the standard definition, but the above characterization (due to \cite[Theorem 1.2]{NZ12} and the sign-coherence result of \cite{DWZ10,GHKK14}) is what we will use in Section~\ref{sec:acyclic_belt}.
The basic role of $\bfg$-vectors is that they provide natural labels of cluster variables: if $\widetilde{B}$ is of full rank, then two cluster variables $x_{i;t}$ and $x_{j;t'}$ coincide in $\cA_{\widetilde B}$ if and only if $\gv_{i;t}$ and $\gv_{j;t'}$ coincide in $\ZZ^n$ \cite{DWZ10,GHKK14}.
We will therefore often denote the cluster variable with $\bfg$-vector $\gv$ by $x_\gv$.

\begin{remark}
  \label{rem:cgrootsweights}
  As our notation is meant to suggest, when $B$ is acyclic and $A$ is its Cartan companion we consistently identify the $\bfg$-vector lattice with the weight lattice $P$ of $A$ by taking the initial $\bfg$-vector $\gv_{i;t_0}$ to the fundamental weight $\omega_i$.
  Similarly we identify the $\bfc$-vector lattice with the root lattice $\cQ$ by taking the initial $\bfc$-vector $\cv_{i;t_0}$ to the simple root $\alpha_i$.
  The duality between $\bfc$- and $\bfg$-vectors expressed in Equation~\eqref{eq:g-vectors_from_c-vectors} should be interpreted as the duality between weights and coroots (i.e. roots of the Langlands dual).
\end{remark}

\subsection{Quiver representations and cluster characters}
\label{sec:quiverbackground}
A \newword{quiver} $Q=(Q_0,Q_1,s,t)$ with $n$ vertices consists of a set $Q_0=\{1,2,\ldots,n\}$ of vertices, a set $Q_1$ of arrows, and maps $s,t:Q_1\to Q_0$ giving the source and target of an arrow.
A \newword{$\kk$-representation} $M=(M_i,M_a)$ of $Q$ consists of a $\kk$-vector space $M_i$ for each $i\in Q_0$ and a $\kk$-linear map $M_a:M_{s(a)}\to M_{t(a)}$ for each arrow $a\in Q_1$.
We denote by $\rep_\kk(Q)$ the hereditary abelian category of finite-dimensional representations, and refer to \cite{ASS06} for details on the material recalled below.

We will only be interested in the case when $Q$ is \newword{acyclic} (i.e. has no oriented cycles or self-loops), hence we assume this throughout.
This condition holds if and only if \newword{path algebra} $\kk Q$ (the $\kk$-vector space on the set of directed paths in $Q$) is finite-dimensional.
As a $Q$-representation $\kk Q$ is the direct sum of the indecomposable projective representations of $Q$, so in particular these are also finite-dimensional (as are the indecomposable injective representations).

Given a representation $M\in\rep_\kk(Q)$ and a projective presentation $0\to P_1\to P_0\to M\to 0$ (which may be chosen to terminate after two terms since $\rep_\kk(Q)$ is hereditary), the \newword{Auslander-Reiten translate} $\tau(M)\in\rep_\kk(Q)$ is defined via the short exact sequence
\begin{equation}
  \label{eq:AR translation}
  0\longrightarrow \tau(M)\longrightarrow D\Hom_Q(P_1,\kk Q)\longrightarrow D\Hom_Q(P_0,\kk Q)\longrightarrow 0,
\end{equation}
where $D=\Hom(-,\kk)$ is the standard $\kk$-linear duality functor.

The \newword{AR translation} $\tau$ plays a key role in the representation theory of $Q$: a representation $P$ is projective if and only if $\tau(P)=0$ while a representation $I$ is injective if and only if it is not of the form $\tau(M)$ for any $M$.
Call a representation $M$ \newword{preprojective} if $\tau^k(M)$ is projective for some $k\ge0$, \newword{postinjective} if there is an injective representation $I$ so that $M=\tau^k(I)$ for some $k\ge0$, and \newword{regular} otherwise.
When $Q$ is an orientation of a simply-laced finite-type Dynkin diagram, the injective representations are preprojective (hence projective representations are postinjective) and there are no regular representations.

We write $\KQ$ for the \newword{Grothendieck group} of $\rep_\kk(Q)$; it is the abelian group freely generated by the classes $[S_i]$ of the vertex-simple representations of $Q$.
For $M\in\rep_\kk(Q)$ we call $[M]\in\KQ$ its \newword{dimension vector}, as $[\Qrep] = \sum_i \dim(M_i)[S_i]$.
Since $\rep_\kk(Q)$ is hereditary, the \newword{Euler-Ringel form} \[\langle M,N\rangle:=\dim_\kk\Hom(M,N)-\dim_\kk\Ext^1(M,N)\] only depends on the dimension vectors of $M$ and $N$, hence defines a bilinear form on $\KQ$.
In particular, it is completely determined by the pairings of vertex-simple representations:
\[
  \langle S_i,S_j\rangle=
  \begin{cases}
    1 & \text{if $i=j$;}\\
    -\#\{\text{arrows $i \to j$ in $Q_1$}\} & \text{if $i \neq j$.}
  \end{cases}
\]

If $\widetilde{Q}$ is a framed quiver of an acyclic quiver $Q$, the non-initial cluster variables in $\cA_{\widetilde{Q}}$ are in bijection with the rigid indecomposable representations of $Q$.
Recall that a representation $M$ of $Q$ is \newword{rigid} if $\Ext^1(M,M) = 0$.
The bijection is given by associating to a representation $M$ a generating function encoding its submodule structure, referred to as its \newword{cluster character} or \newword{Caldero-Chapoton function}.

The definition of the cluster character involves the \newword{$\bfg$-vector} $\grep(\Qrep)\in\ZZ^n$ of a representation $\Qrep$.
If
\[
  0 \to \Qrep \to \bigoplus I_j^{a_j} \to \bigoplus I_j^{b_j} \to 0
\]
is an injective resolution of $M$, then $\grep(\Qrep)$ is the element of $\ZZ^n$ whose $j$-th component is $b_j - a_j$.
The negative of $\grep(\Qrep)$ is often called the coindex of $M$, but the present terminology is justified by Theorem~\ref{thm:CCbijection}.
Below we write $x^{\grep(\Qrep)}$ for the monomial $\prod x_j^{b_j - a_j}$.

\begin{remark}
  As $Q$ is acyclic, it has an associated symmetric Cartan companion $A$.
  As in Section~\ref{sec:clusteralgebras} we identify the $\bfg$-vector lattice $\ZZ^n$ with the weight lattice $P$ associated to $A$ by mapping the $j$-th standard basis vector to the fundamental weight $\omega_j$.
  Thus, for example, $\grep(\Qrep) = \sum (b_j - a_j)\omega_j$ in the above definition.
\end{remark}

Given a dimension vector $e = \sum_i e_i[S_i] \in \NN^n \subset \KQ$, the \newword{quiver Grassmannian} $\Gr_eM$ is the projective variety of $e$-dimensional submodules of $M$.
We write $\chi(\Gr_eM)$ for its Euler characteristic (formally, in \'etale cohomology with compact supports).
If $\widetilde{B}=(b_{ij})$ is the exchange matrix associated to $\widetilde{Q}$ we write $\hat{y}_j$ for the monomial $\prod\limits_{i=1}^m x_i^{b_{ij}}$, and $\hat{y}^e$ for $\prod\limits_{j=1}^n \hat{y}_j^{e_j}$.

\begin{definition}
  Let $\widetilde{Q}$ be a framed quiver and $M$ a representation of $Q$.
  The (framed) cluster character of $M$ is
  \begin{equation}
    \label{eq:cc formula}
    x_M := x^{\grep(M)} \sum_{e \in \NN^n}\chi(\Gr_e M) \hat{y}^e \in \kk[x_1^{\pm 1},\dotsc,x_m^{\pm 1}].
  \end{equation}
\end{definition}

\begin{theorem}
  \label{thm:CCbijection}
  \cite{CC06,CK06}
  Suppose that $\widetilde{Q}$ has no arrows whose source is a frozen vertex (equivalently, the non-principal part of $\widetilde{B}$ has only positive entries).
  Then the assignment $M \mapsto x_M$ defines a bijection between the set of rigid indecomposable representations of $Q$ and the set of non-initial cluster variables in $\cA_{\widetilde{Q}}$.
  Moreover, the $\bfg$-vector of the cluster variable $x_M$ is equal to $\grep(M)$.
\end{theorem}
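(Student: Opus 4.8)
The plan is to derive the statement from the categorification of $\cA_{\widetilde Q}$ by the cluster category, following \cite{CC06,CK06}. Recall that $\cC_Q := D^b(\rep_\kk Q)/\langle \tau^{-1}[1]\rangle$ is a triangulated (in fact $2$-Calabi--Yau) category in which every indecomposable object has a canonical representative that is either an indecomposable $Q$-module or one of the shifted projectives $P_1[1],\dotsc,P_n[1]$. I would first define a map $X(-)$ from the objects of $\cC_Q$ to $\cF$ by setting $X(P_i[1]) := x_i$, $X(M) := x_M$ for $M$ a module, and $X(L \oplus N) := X(L)\,X(N)$; the only compatibility needed at this stage is that the right-hand side of \eqref{eq:cc formula} is itself multiplicative on direct sums of modules, which follows from the additivity of the assignment $M \mapsto \grep(M)$ together with the multiplicativity of Euler characteristics of quiver Grassmannians under direct sums.

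The key analytic input is the cluster multiplication formula: if $L$ and $N$ are indecomposable objects of $\cC_Q$ with $\dim_\kk \Ext^1_{\cC_Q}(L,N) = 1$ and $N \to E \to L \to N[1]$, $L \to E' \to N \to L[1]$ are the two non-split triangles (with middle terms taken in $\cC_Q$), then $X(L)\,X(N) = X(E) + X(E')$. I would establish this exactly as in \cite{CC06,CK06}, by stratifying the quiver Grassmannians of $E$ and of $E'$ according to the submodules they induce on $L$ and on $N$ and computing the Euler characteristic of each stratum: over the locus where the relevant extension does not split a stratum fibers as an affine-space bundle over a product of quiver Grassmannians of $L$ and $N$, hence contributes only through its base, while the ``split'' stratum is matched against the corresponding piece of $\Gr_\bullet(L \oplus N)$; summing and cancelling the split contributions produces the identity. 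This Euler-characteristic bookkeeping --- in particular the split/non-split dichotomy and the cancellation it forces --- is where essentially all the work lies, and I expect it to be the main obstacle; everything after it is formal.

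Granting the multiplication formula, I would invoke the description of the clusters of $\cA_{\widetilde Q}$ via cluster-tilting objects of $\cC_Q$ (Buan--Marsh--Reineke--Reiten--Todorov, as used in \cite{CK06}): cluster-tilting objects correspond bijectively to clusters, the initial cluster corresponding to $P_1[1] \oplus \dotsb \oplus P_n[1]$; mutating a cluster-tilting object $T = T_k \oplus \bigoplus_{j \neq k} T_j$ at $T_k$ corresponds to seed mutation at $k$; and the two exchange triangles attaching $T_k$ to its replacement $T_k^{*}$ have middle terms built from the $T_j$, $j \neq k$. Inducting along the exchange graph starting from $P_1[1] \oplus \dotsb \oplus P_n[1]$ and applying the multiplication formula at each step then forces $X(T_{k;t}) = x_{k;t}$ for every $t \in \TT$ and every $k$. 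The base case --- matching $X$ on the cluster-tilting objects one mutation away from $P_1[1] \oplus \dotsb \oplus P_n[1]$ with the corresponding once-mutated seeds --- is a direct computation, and it is here that the hypothesis that $\widetilde Q$ has no arrows out of a frozen vertex enters, to control the monomials in those first exchange relations. Since every indecomposable rigid object of $\cC_Q$ other than the $P_i[1]$ is a direct summand of some cluster-tilting object (see \cite{CK06}) and every non-initial cluster variable lies in some cluster, it follows that $M \mapsto x_M$ maps onto the set of non-initial cluster variables; injectivity holds because the correspondence between cluster-tilting objects and clusters matches indecomposable summands with cluster variables bijectively.

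Finally, for the $\bfg$-vector claim, the coefficient-independence of $\bfg$-vectors (separation of formulas, \cite{FZ07}) reduces us to the cluster algebra with principal coefficients, i.e.\ to the extension $\Bpr$ of $B$. There the ambient Laurent ring carries a $\ZZ^n$-grading with $\deg x_i = \omega_i$ for $1 \le i \le n$ and $\deg x_{n+j} = -\sum_{i=1}^n b_{ij}\,\omega_i$, which makes each $\hat y_j$ homogeneous of degree $0$. Formula \eqref{eq:cc formula} then exhibits $x_M$ as homogeneous of degree $\grep(M)$, since every monomial $x^{\grep(M)}\hat y^e$ occurring in it has that degree. As the cluster variables in principal coefficients are homogeneous in this grading with degree equal to their $\bfg$-vector \cite{FZ07}, the $\bfg$-vector of $x_M$ is $\grep(M)$.
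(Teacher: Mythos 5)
The paper does not prove this result; it is taken as a citation from \cite{CC06,CK06}, so there is no internal proof to compare against. Your proposal is a faithful reconstruction of the standard argument of those references: establish multiplicativity of the cluster character on direct sums, prove the Caldero--Keller one-dimensional-$\Ext$ multiplication formula by stratifying quiver Grassmannians into split and non-split loci, transport the exchange graph through the cluster-tilting objects of $\cC_Q$ following Buan--Marsh--Reineke--Reiten--Todorov, and then read off the $\bfg$-vector from homogeneity with respect to the $\ZZ^n$-grading in which $\deg x_i = \omega_i$ and each $\hat y_j$ has degree zero. The one place where your sketch glosses a small point is injectivity: the fact that a rigid indecomposable $M$ always yields the same cluster variable regardless of which cluster-tilting object one reaches it through is not immediate from the per-cluster bijection, but it follows either from the inductive consistency of the exchange-graph matching or, more cleanly, from the $\bfg$-vector identification you prove last together with the fact that $\bfg$-vectors distinguish cluster variables; reordering so the grading argument comes first would tighten this. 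Otherwise this is exactly the right approach.
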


The positivity assumption on the non-principal part of $\widetilde{B}$ is only to keep the formulas from being more complicated than we need.
More generally, cluster variables and cluster characters will be related by a monomial determined by the separation of additions formula of \cite{FZ07}.

\begin{remark}
  Theorem~\ref{thm:CCbijection} is a simple instance of a number of closely related connections between cluster theory and the representation theory of algebras.
  Other connections involve, for example, preprojective algebras \cite{GLS13}, quivers with potential \cite{DWZ10}, and the theory of cluster categories \cite{BMRRT06}.
\end{remark}

With Theorem~\ref{thm:CCbijection} in hand we refer to non-initial cluster variables of $\cA_{\widetilde{Q}}$ as preprojective, postinjective, or regular if they come from a quiver representation of the corresponding type.
This is compatible with the discussion of the acyclic belt in Section~\ref{sec:clusteralgebras}:

\begin{proposition}
  \label{prop:ccvsacyclicbelt}
  A rigid indecomposable representation $\Qrep$ is preprojective (resp. postinjective) if and only if the cluster variable $x_M$ is of the form $x_{i;t_\Zidx}$ for some $\Zidx > 0$ (resp. $\Zidx < 0$) in the notation of Equation~\eqref{eq:cluster_variables_on_path}.
\end{proposition}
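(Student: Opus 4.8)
The plan is to follow the path \eqref{eq:path} step by step and keep track of which quiver representations have cluster character equal to the variables $x_{i;t_\Zidx}$, matching them against the preprojective and postinjective components of $\rep_\kk Q$. The natural setting is the cluster category $\cC_Q = D^b(\rep_\kk Q)/\langle\tau^{-1}[1]\rangle$: under the cluster character map (extended by $P_i[1]\mapsto x_i$) the initial cluster at $t_0$ is the image of the cluster-tilting object $T^{(0)} = \bigoplus_i P_i[1]$, mutation of clusters corresponds to mutation of cluster-tilting objects, and by Theorem~\ref{thm:CCbijection} the non-initial variables in the cluster at $t_\Zidx$ are exactly the cluster characters of the summands of $T^{(\Zidx)}$ that are honest modules (not $[1]$-shifts of projectives). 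So it suffices to identify these summands.

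For $\Zidx>0$: since $\sigma_1,\dotsc,\sigma_n$ is an admissible sequence of sinks, a direct computation (equivalently, the compatibility of cluster-tilting mutation with APR tilts) shows that the first $n$ mutations successively replace a shifted projective $P_{\sigma_j}[1]$ by the projective module $P_{\sigma_j}$, so $T^{(\Zidx)} = \bigoplus_{j\le\Zidx}P_{\sigma_j}\oplus\bigoplus_{j>\Zidx}P_{\sigma_j}[1]$ for $0\le\Zidx\le n$; in particular $T^{(n)} = \bigoplus_i P_i$. The composite of these $n$ mutations is the Coxeter autoequivalence of $\cC_Q$, which is inverse to the suspension because $\cC_Q$ is $2$-Calabi--Yau (so that $\tau_{\cC_Q} = [1]$); hence
\[
  T^{(\Zidx+n)} \cong T^{(\Zidx)}[-1] \quad\text{in } \cC_Q \quad (\Zidx\ge 0).
\]
Iterating and using $M[-1]\cong\tau^{-1}M$ in $\cC_Q$, one finds that the module summands occurring in $T^{(\Zidx)}$ for $\Zidx>0$ are precisely the preprojective indecomposables $\tau^{-c}P_{\sigma_j}$ ($c\ge0$, with $\tau^{-c}P_{\sigma_j}$ first appearing in $T^{(cn+j)}$), all of which are rigid. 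The postinjective case is the mirror image: $\sigma_n,\dotsc,\sigma_1$ is an admissible sequence of sources, the negative half of the path first replaces each $P_{\sigma_j}[1]$ by the injective module $I_{\sigma_j}$, and $T^{(\Zidx-n)}\cong T^{(\Zidx)}[1]$, so the module summands occurring in $T^{(\Zidx)}$ for $\Zidx<0$ are exactly the postinjective indecomposables. Combining these with Theorem~\ref{thm:CCbijection} and the already-recalled identification \cite[Corollary 4]{CK06} of $\{x_{i;t_\Zidx}\}$ with the cluster variables on the acyclic belt, a rigid indecomposable $M$ satisfies $x_M = x_{i;t_\Zidx}$ for some $\Zidx>0$ (resp.\ some $\Zidx<0$) if and only if $M$ is preprojective (resp.\ postinjective). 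In non-finite type the preprojective and postinjective modules are disjoint classes, so these two biconditionals hold separately; in finite type they coincide, every indecomposable is of this kind, and the path \eqref{eq:path} is eventually periodic and still sweeps out every non-initial cluster variable in both directions, so the statement holds there as well.

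The main obstacle is making precise the two mutation computations behind the description of $T^{(\Zidx)}$: that the first round of mutations unshifts the projectives one at a time (via the reflection-functor/APR-tilt picture, which requires $\sigma_1,\dotsc,\sigma_n$ to be an admissible sink sequence -- exactly the hypothesis built into the choice of $\sigma$), and that the composite of a round of $n$ mutations is the Coxeter autoequivalence of $\cC_Q$; the remaining shift-bookkeeping in $\cC_Q$ and the conclusion are then formal. One could alternatively bypass the cluster category and compare $\bfg$-vectors: using \eqref{eq:g-vectors_from_c-vectors} and the description of the $\bfc$-vectors along the acyclic belt as $\pm$ the dimension vectors of preprojective and postinjective modules, check that $\grep(M) = \gv_{i;t_\Zidx}$ for some $\Zidx>0$ exactly when $M$ is preprojective, and finish with the fact that $\bfg$-vectors label cluster variables uniquely; this trades the cluster-category computations for the (equally standard) analysis of belt $\bfc$-vectors.
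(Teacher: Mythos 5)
Your proof is correct and follows essentially the same route as the paper's: both identify the cluster variables along the path~\eqref{eq:path} with the preprojective/postinjective components of the AR quiver by noting that mutations at sinks/sources correspond to reflection functors whose composite realizes the AR translation, and then iterating. The only difference is one of framing: the paper works directly with cluster characters and reflection functors on modules (citing \cite{DWZ10,Ru11,BB76}), while you repackage the same computation in the cluster category $\cC_Q$, where the Coxeter autoequivalence and the 2-Calabi--Yau identity $\tau=[1]$ do the bookkeeping; this is a tidy way to make the paper's ``it is easy to see'' precise, but it is not a genuinely different argument. (The alternative $\bfg$-/$\bfc$-vector route you sketch at the end \emph{is} a distinct approach, and would sidestep the representation-theoretic input entirely, but you leave it as a sketch.)
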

\begin{proof}
  Recall that the mutation sequence in Equation~\eqref{eq:path} defining the node $t_\Zidx$ for $\Zidx>0$ (resp. $\Zidx<0$) proceeds by mutating sinks (resp. sources) of $Q$.
  By \cite{DWZ10,Ru11}, the first mutation out of a given node always produces the cluster character of a vertex simple representation and mutations of the initial cluster at a sink/source acts on cluster characters via reflection functors acting on the representation.
  Using this and the interpretation of the Auslander-Reiten translation in terms of reflection functors \cite{BB76}, it is easy to see that the variable $x_{i;t_n}$ (resp. $x_{i;t_{-n}}$) is given by the cluster character $x_{P_i}$ (resp. $x_{I_i}$) of the projective cover (resp. injective hull) of the vertex simple $S_i$.
  Moreover, the same reasoning shows that the representations corresponding to variables $x_{\langle\Zidx\rangle;t_{\Zidx}}$ for $\Zidx>0$ (resp. $\Zidx<0$) sweep out all representations in the preprojective (resp. postinjective) component of the Auslander-Reiten quiver of $Q$.
\end{proof}

Theorem~\ref{thm:CCbijection} and Proposition~\ref{prop:ccvsacyclicbelt} generalize to the case of skew-symmetrizable matrices by replacing quiver representations with representations of valued quivers or species.
We refer to \cite{Ru11,Ru15} for relevant background, but omit a detailed discussion here.
As the characterization of preprojective and postinjective cluster variables in terms of the acyclic belt is what will be used in Section~\ref{sec:preproj}, we are content to take this as a definition in the skew-symmetrizable case.

\subsection{Acyclic cluster algebras from double Bruhat cells}
\label{sec:Coxeterdbc}

We can realize the cluster algebra associated to an acyclic skew-symmetrizable matrix $B$ geometrically in terms of the Kac-Moody group $G$ associated to its Cartan companion $A$.
More precisely, through the group we naturally find the algebra associated to the exchange matrix
\[
  \Bdp := \begin{bmatrix} B \\ Id_n \\ Id_n \end{bmatrix},
\]
which we say has \newword{doubled principal coefficients}.
When working with the cluster algebra $\cA_{\Bdp}$ we will denote the frozen variables $x_{i+n}$ and $x_{i+2n}$ for $1 \leq i \leq n$ by $\cvar_i$ and $\cvar_{\ol{\imath}}$, respectively.

We recall the double Bruhat decomposition
\[
  G = \bigsqcup_{u,v \in W} G^{u,v},\quad G^{u,v} : = B_+ u B_+ \cap B_- v B_-
\]
of $G$, where $B_{\pm}$ are the standard opposite Borel subgroups of $G$.
Each $G^{u,v}$ is a smooth affine variety of dimension $n + \ell(u) + \ell(v)$, where $\ell(u)$, $\ell(v)$ are the lengths of $u$ and $v$ \cite{FZ99,Wil13}.
The coordinate ring of $G^{u,v}$ is an upper cluster algebra and a finite subset of its clusters are in correspondence with shuffles of reduced words for $u$ and $v$ \cite{BFZ05,Wil13}.

\begin{remark}
  Each $G^{u,v}$ is the intersection of $G \subset \widehat{G}$ with the double Bruhat cell $\widehat{G}^{u,v}$ of $\widehat{G}$.
  The coordinate ring of the latter is also an upper cluster algebra whose initial exchange matrix has the same principal part, but with $\corank(A)$ additional frozen variables: $G^{u,v}$ is the locus where these are equal to one.
\end{remark}

\begin{theorem}
  \label{thm:coordring}
  Let $B$ be an acyclic skew-symmetrizable matrix, $G$ the Kac-Moody group of its Cartan companion $A$, and $c$ the associated Coxeter element.
  There is an isomorphism $\cA_{\Bdp} \cong \kk[G^{c,c^{-1}}]$ that identifies the initial cluster variables with restrictions of the following monomials in generalized minors:
  \begin{equation}
    \label{eq:initialcluster}
    x_i = \Delta_{\omega_i},
    \quad
    \cvar_i = \Delta^{\omega_i}_{ c\omega_i} \prod_{j < i}(\Delta^{\omega_j}_{c \omega_j})^{a_{ji}},
    \quad
    \cvar_{\ol{\imath}} = \Delta^{c \omega_i}_{\omega_i} \prod_{j < i}(\Delta^{c \omega_j}_{\omega_j})^{a_{ji}}.
  \end{equation}
\end{theorem}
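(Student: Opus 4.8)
The plan is to construct the isomorphism $\cA_{\Bdp}\cong\kk[G^{c,c^{-1}}]$ by combining the known cluster structure on the double Bruhat cell with an explicit change of coefficients, and then to verify the stated formulas by matching initial seeds. First I would recall from \cite{BFZ05,Wil13} that $\kk[G^{c,c^{-1}}]$ is an upper cluster algebra, and choose the seed associated to a specific shuffle of reduced words for $c$ and $c^{-1}$: namely, take the reduced word $s_{\sigma_1}\cdots s_{\sigma_n}$ for $c$ and its reverse for $c^{-1}$, and form the double reduced word $\mathbf{i}=(\ol{\sigma_1},\dots,\ol{\sigma_n},\sigma_n,\dots,\sigma_1)$ (barred letters for the $c^{-1}$-part, unbarred for the $c$-part, in the convention of \cite{BFZ05}). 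The cluster variables of this seed are the generalized minors $\Delta^{u_{\geq k}\omega_{i_k}}_{v_{\leq k}\omega_{i_k}}$ attached to the word, and since $c$ and $c^{-1}$ are Coxeter elements each index $i\in[1,n]$ appears exactly once on each side, so the seed has $n$ mutable variables $x_i=\Delta_{\omega_i}$ (the ``middle'' minors) and $2n$ frozen variables coming from the first and last occurrences of each index. Computing these boundary minors from the definition $\Delta^{u\lambda}_{v\lambda}(g)=\Delta_\lambda(\ol{u}^{-1}g\ol{v})$ (using barred representatives on the $B_-$-side and the interaction with the $a_{ji}$ via the defining property $t^{\alpha^\vee_j}\cdot\omega_i$) is the routine computation that produces the monomials in \eqref{eq:initialcluster}; the products over $j<i$ with exponents $a_{ji}$ arise precisely because the reduced word for $c$ orders the simple reflections by $\sigma$, so that $s_{\sigma_i}$ acts on $\omega_{\sigma_i}$ after the earlier reflections have already been applied, contributing the off-diagonal Cartan entries.

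Next I would identify the exchange matrix of this seed. By \cite{BFZ05} the principal part of the exchange matrix attached to a double reduced word is computed from the combinatorics of the word, and for the Coxeter double word above a direct check shows the principal part equals $B$ (this is essentially the content of Figure~\ref{fig:matrices}: the off-diagonal signs of $B$ record whether $\sigma^{-1}(i)<\sigma^{-1}(j)$). The non-principal part has $2n$ rows; one verifies it is $\begin{bmatrix} Id_n\\ Id_n\end{bmatrix}$ after the identification of frozen variables with $\cvar_i,\cvar_{\ol\imath}$ as above — here one may need to absorb a monomial transformation in the frozen variables, which is harmless since it is invertible over $\kk[x^{\pm1}]$ and does not change the ambient upper cluster algebra. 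At this point $\kk[G^{c,c^{-1}}]$ is realized as an \emph{upper} cluster algebra with initial exchange matrix $\Bdp$. To upgrade to an equality with the (ordinary) cluster algebra $\cA_{\Bdp}$, I would invoke acyclicity: when the principal part $B$ is acyclic, the results of \cite{BFZ05} (see also \cite{Wil13}) give that the cluster algebra and upper cluster algebra coincide, so $\cA_{\Bdp}\cong\overline{\cA}_{\Bdp}\cong\kk[G^{c,c^{-1}}]$.

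The main obstacle I anticipate is not any single hard step but the careful bookkeeping needed to pin down the \emph{exact} coefficient pattern — i.e. showing the non-principal part is literally $\begin{bmatrix}Id_n\\ Id_n\end{bmatrix}$ and that the frozen variables are exactly the monomials written in \eqref{eq:initialcluster}, with the correct exponents $a_{ji}$ and the correct range $j<i$. This requires fixing all sign and ordering conventions (the choice of $\overline{w}$ versus $\underline{\underline{w}}$ representatives, the convention for which Borel gives rows versus columns in $\Delta^{u\lambda}_{v\lambda}$, and the orientation convention relating $Q$ to $c$) consistently with \cite{BFZ05,Wil13}, and then tracing the minor $\Delta^{u_{\geq k}\omega_i}_{v_{\leq k}\omega_i}$ for the first and last occurrences $k$ of each index through the Chamweight/Cartan-matrix identities. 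A secondary subtlety is the passage through the intermediate torus: $G^{c,c^{-1}}$ sits inside $\widehat{G}^{c,c^{-1}}$, whose coordinate ring carries $\corank(A)$ extra frozen variables (as noted in the Remark preceding the Theorem), so I must check that restricting to the locus where those are $1$ exactly kills the corresponding frozen directions without disturbing the $\Bdp$-pattern — this is where the choice of splitting $P\into\widehat{P}$ fixed in Section~\ref{sec:group background} enters, and one should confirm the construction is independent of it.
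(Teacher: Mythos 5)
Your overall strategy matches the paper's: start from the Berenstein--Fomin--Zelevinsky / Williams upper cluster algebra structure on $\kk[G^{c,c^{-1}}]$, use acyclicity and full rank to pass from the upper cluster algebra to the ordinary one, and then apply a monomial change of coefficients. However, there is a genuine error in the middle of your argument. You claim that computing the boundary minors $\Delta^{u_{\geq k}\omega_{i_k}}_{v_{\leq k}\omega_{i_k}}$ directly "produces the monomials in \eqref{eq:initialcluster}," and that "the products over $j<i$ with exponents $a_{ji}$ arise precisely because the reduced word for $c$ orders the simple reflections by $\sigma$." This is false. In the seed attached to the double word for $(c,c^{-1})$, the mutable variables are the single minors $\Delta_{\omega_i}$ and the frozen variables are the single minors $\Delta^{\omega_i}_{c\omega_i}$ and $\Delta^{c\omega_i}_{\omega_i}$; nothing about the reduced word makes them factor into products. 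Correspondingly, the non-principal part of the exchange matrix of this seed is \emph{not} $\left[\begin{smallmatrix}Id_n\\ Id_n\end{smallmatrix}\right]$ as you assert; it is two copies of $Id_n-[B]_+$.

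The monomial transformation that you then mention as "harmless bookkeeping one may need to absorb" is in fact the heart of the matter: it is a separation-of-additions rescaling (cf. \cite[Proposition 4.5]{YZ08}) applied precisely so that the non-principal block $Id_n-[B]_+$ becomes $Id_n$, and this same transformation is what turns the plain frozen minors $\Delta^{\omega_j}_{c\omega_j}$, $\Delta^{c\omega_j}_{\omega_j}$ into the specific products $\cvar_i$, $\cvar_{\ol{\imath}}$ in \eqref{eq:initialcluster}. Concretely, with $\sigma=\mathrm{id}$ one has $(Id_n-[B]_+)_{ik}=\delta_{ik}+a_{ik}\mathbbm{1}[i<k]$, which is exactly what forces the exponent $a_{ji}$ to appear for $j<i$. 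Your proposal gets the right answer but attributes it to the wrong mechanism: the $a_{ji}$ exponents are the fingerprint of the coefficient rescaling, not of the reduced-word combinatorics. Your final paragraph on the passage from $\widehat{G}$ to $G$ is a reasonable side concern, but the cited result of \cite{Wil13} already gives the cluster structure on $G^{c,c^{-1}}$ directly, so no extra argument about the splitting $P\into\widehat{P}$ is needed here.
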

\begin{proof}
  Specializing \cite[Theorem 2.10]{BFZ05} and \cite[Theorem 4.9]{Wil13} to the case at hand we get that $\kk[G^{c,c^{-1}}]$ is the upper cluster algebra with initial cluster variables $\Delta_{\omega_i}$, frozen variables $\Delta_{c\omega_i}^{\omega_i}$ and $\Delta_{\omega_i}^{c\omega_i}$, and initial exchange matrix obtained by extending $B$ with two copies of the matrix $Id_n-[B]_+$ (here we apply $[\,\cdot\,]_+$ entrywise).
  Note that \cite{BFZ05,Wil13} use transposed conventions on exchange matrices; here and everywere else in this paper we follow the convention of \cite{BFZ05}.

  This exchange matrix is acyclic by hypothesis and has full rank.
  Thus, following \cite[Corollary 1.19]{BFZ05}, $\kk[G^{c,c^{-1}}]$ is a cluster algebra.

  The cluster structure we obtain in this way is not quite the one in the statement of the theorem; the two are nonetheless strictly related.
  Indeed, since the variables $\cvar_i$ are obtained from the minors $\Delta^{\omega_j}_{ c\omega_j}$ by an invertible monomial transformation (and the variables $\cvar_{\ol{\imath}}$ are obtained similarly from the minors $\Delta^{c \omega_j}_{\omega_j}$), we can use separation of additions to rescale all the cluster variables in $\kk[G^{c,c^{-1}}]$ and establish the desired isomorphism (cf. \cite[Proposition 4.5]{YZ08}).
\end{proof}

  One can also consider the $2n$-dimensional subvariety $L^{c,c^{-1}} \subset G^{c,c^{-1}}$ where $z_{\ol{1}},\dotsc,z_{\ol{n}}$ are equal to one; this is the reduced double Bruhat cell studied in \cite{YZ08}.
  In particular, $\kk[L^{c,c^{-1}}]$ is the cluster algebra with principal coefficients associated to $B$, and our results immediately translate to statements in the principal coefficients case by specializing $\cvar_{\ol{1}},\dotsc,\cvar_{\ol{n}}$ to one.

\begin{remark}
There is another Lie-theoretic realization of $\cA_{\widetilde{B}}$ (for a certain extension of $B$) as the coordinate ring of $G^{e,c^2}$ \cite{GLS11}.
We do not know whether our main results describing cluster variables as minors also hold in some form for this realization.

\end{remark}

In computing various relations in $\kk[G^{u,v}]$ we will make extensive use of the fact that, if $u=s_{i_1}\cdots s_{i_{\ell(u)}}$ and $v=s_{j_1}\cdots s_{j_{\ell(v)}}$, then a generic $g \in G^{u,v}$ can be factored as
\begin{equation}
  \label{eq:uvfactorization}
  g = x_{\ol{\imath_1}}(t_{\ol{1}})\cdots x_{\ol{\imath_{\ell(u)}}}(t_{\ol{\ell(u)}})hx_{j_1}(t_{1}) \cdots x_{j_{\ell(v)}}(t_{\ell(v)})
\end{equation}
for some $h \in H$ and $t_j, t_{\ol{\imath}} \in \kk^\times$ \cite{FZ99,Wil13}.
For example, with respect to such a factorization it is clear that we have $\Delta_\lambda(g) = h^\lambda$ for any dominant weight $\lambda \in \widehat{P}^+$.
In particular, the generalized minors $\Delta_{\omega_i}$ that will be playing the role of initial (mutable) cluster variables evaluate to $h^{\omega_i}$.
Some similar computations are collected in Lemma~\ref{lemma:coefficients_values}.

\section{Preprojective and postinjective cluster variables as minors}
\label{sec:preproj}
In this section we prove that when $B$ is any acyclic skew-symmetrizable matrix, the identification $\cA_{\Bdp} \cong \kk[G^{c,c^{-1}}]$ realizes preprojective and postinjective cluster variables as restrictions of highest- and lowest-weight minors, respectively.
We first compute in Section~\ref{sec:acyclic_belt} the $\bfc$-vectors and $\bfg$-vectors appearing along the path \eqref{eq:path} as well as an explicit list of exchange relations that these cluster variables satisfy.
The identification with minors then follows in Section~\ref{sec:hlwminors} by comparing these relations with restrictions to $G^{c,c^{-1}}$ of the generalized determinantal identities of Proposition~\ref{prop:fundid}.

Recall that since $B$ is acyclic there exists a permutation $\sigma\in S_n$ such that $b_{\sigma_i,\sigma_j}\geq 0$ whenever $i<j$.
To simplify our notation in this section, we will assume that $\sigma$ is the identity permutation (no generality is lost, since we can always relabel the rows and columns of $B$ to achieve this).
Moreover, we specifically assume that the Cartan companion $A$ of $B$ is not of finite type, the main claims being already established in \cite{YZ08} for the case of a semisimple algebraic group.
Finally, without loss of generality, we will assume that $A$ is not block decomposable.

\subsection{Cluster variables in the acyclic belt}
\label{sec:acyclic_belt}

In view of the assumptions made at the beginning of this section, the Coxeter element associated to $B$ is $c=s_1\cdots s_n$ and the path (\ref{eq:path}) in the acyclic belt becomes
\begin{equation}
\label{eq:simplepath}
  \cdots
  \dashname{n}
  t_{-n}
  \dashname{1}
  \cdots
  \dashname{n-1}
  t_{-1}
  \dashname{n}
  t_0
  \dashname{1}
  t_1
  \dashname{2}
  \cdots
  \dashname{n}
  t_n
  \dashname{1}
  \cdots
\end{equation}
Moreover, any prefix of either of the infinite words $c^\infty =  s_1 \cdots s_n s_1 \cdots$ or $c^{-\infty} = s_n \cdots s_1 s_n \cdots$ is reduced (cf. \cite{Spe09}).
We will denote the prefix of $c^\infty$ of length $\Zidx$ by $c^\infty_{\le\Zidx}$ (so for example, $c^\infty_{\leq n}$ is just $c$ itself), similarly for $c^{-\infty}$.

For $\Zidx\in\ZZ$ we denote by $\mod{\Zidx}$ the unique integer in $[1,n]$ congruent to $\Zidx$ modulo $n$.
The edge immediately to the left of $t_\Zidx$ is then labelled by $\mod{\Zidx}$.
Observe that the nodes of $\TT$ we are considering lie along a \emph{sink/source adapted sequence}, i.e. every mutation occurs at a sink or source of the corresponding (valued) quiver $Q$.
More precisely, the $\mod{\Zidx}$-th column of $B^{t_\Zidx}$ contains only non-negative entries and the $\mod{\Zidx+1}$-st column only non-positive entries.
In particular, it follows from the definition of matrix mutation that the off-diagonal entries of the exchange matrix $B^{t_\Zidx}$ have the same absolute values as the corresponding entries of $A$.

Recall that, by definition, $\bfc$-vectors encode the coefficient part of the exchange relations in a cluster algebra with principal coefficients, and by extension also for doubled principal coefficients.
Thus to compute the exchange relations along the path \eqref{eq:simplepath} in $\cA_{\Bdp} \cong \kk[G^{c,c^{-1}}]$, we begin by describing $\bfc$-vectors.
Recall from Remark~\ref{rem:cgrootsweights} that we regard the $\bfc$-vectors $\cv_{i;t_\Zidx}$ as elements of the root lattice $\cQ$, the initial $\bfc$-vectors $\cv_{i;t_0}$ being identified with the simple roots $\alpha_i$.

\begin{lemma}
  \label{lemma:c-vectors}
  Let $\cv_{i;t_\Zidx}$ be the $i$-th $\bfc$-vector at vertex $t_\Zidx$.
  Then
  \begin{equation}
    \label{eq:c-vectors}
    \cv_{i;t_\Zidx}
    =
    \begin{cases}
      c^\infty_{\le\Zidx}\, \alpha_i & \text{if $\Zidx>0$;}\\
      \alpha_i & \text{if $-n < \Zidx \leq 0$ and  $i\leq n+\Zidx$;} \\
      -\alpha_i & \text{if $-n\leq \Zidx < 0$ and $i > n+\Zidx$;} \\
      -c^{-\infty}_{\le-\Zidx-n}\, \alpha_i \qquad & \text{if $\Zidx<-n$.}\\
    \end{cases}
  \end{equation}
  Moreover, whenever $\Zidx > 0$ or $\Zidx < -n$, the $\bfc$-vector $\cv_{\mod{\Zidx};t_\Zidx}$ is a negative root and the $\bfc$-vector $\cv_{\mod{\Zidx+1};t_\Zidx}$ is a positive root.
\end{lemma}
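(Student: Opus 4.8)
The plan is to prove \eqref{eq:c-vectors} by induction on $|\Zidx|$, treating $\Zidx \geq 0$ and $\Zidx \leq 0$ separately, and to extract the last assertion as an immediate consequence of the closed forms obtained. First I would record the local shape of the exchange matrices along the path. Since the mutation sequence defining the $t_\Zidx$ is sink/source adapted, at each node $t_\Zidx$ with $\Zidx > 0$ the mutation at $\mod{\Zidx}$ takes place at a source of the quiver $Q^{t_{\Zidx-1}}$ (and symmetrically for $\Zidx < 0$ one mutates at sinks). For principal coefficients, the $\bfc$-vector mutation rule at a vertex $k$ where column $k$ of the principal part is sign-coherent simplifies drastically: if $\cv_{k;t}$ is a nonnegative (resp. nonpositive) vector then $\cv_{k;t'} = -\cv_{k;t}$, and for $i \neq k$ one has $\cv_{i;t'} = \cv_{i;t} + [\varepsilon\, b^t_{ki}]_+\, \cv_{k;t}$ for an appropriate sign $\varepsilon$ — which is exactly the action of the simple reflection $s_{\mod{\Zidx}}$ on the root lattice when the $\bfc$-vectors at $t$ are the roots $\{\pm\alpha_i\}$ permuted by a Weyl group element and the off-diagonal entries of $B^t$ agree in absolute value with those of $A$ (as noted just before the lemma statement). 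The base cases $-n < \Zidx \leq 0$ are handled directly: at $t_0$ the $\bfc$-vectors are the simple roots $\alpha_i$ by definition, and mutating backwards at $n, n-1, \dots$ one vertex at a time flips exactly one simple root to its negative and leaves the others fixed, because at $t_{-j}$ the quiver has been partially reversed so that the vertex being mutated is a sink whose $\bfc$-vector is still $\pm\alpha_i$ and whose neighbors in the relevant direction have already been processed.

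Next I would run the forward induction for $\Zidx > 0$. Assume the $\bfc$-vectors at $t_{\Zidx-1}$ are given by applying $c^\infty_{\le \Zidx-1}$ to the simple roots (with the convention $c^\infty_{\le 0} = e$, matching the base case $\Zidx = 0$). Mutating at $\mod{\Zidx}$: by sink/source adaptedness and the inductive description, the matrix $B^{t_{\Zidx-1}}$ has its $\mod{\Zidx}$-th column with a fixed sign, and the $\bfc$-vector mutation rule there implements precisely left multiplication by $s_{\mod{\Zidx}}$ on all $\bfc$-vectors simultaneously, giving $\cv_{i;t_\Zidx} = s_{\mod{\Zidx}}\, c^\infty_{\le \Zidx-1}\, \alpha_i = c^\infty_{\le \Zidx}\, \alpha_i$. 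Here I would invoke that every prefix of $c^\infty$ is reduced (stated in the excerpt, cf.\ \cite{Spe09}), which guarantees the expression $c^\infty_{\le \Zidx}$ is the honest length-$\Zidx$ prefix and that no unexpected cancellation occurs. The case $\Zidx < -n$ is symmetric, running the backward induction past $t_{-n}$ where all $\bfc$-vectors have become $-\alpha_i$, and each subsequent backward mutation applies a simple reflection from $c^{-\infty}$, producing $\cv_{i;t_\Zidx} = -c^{-\infty}_{\le -\Zidx - n}\, \alpha_i$.

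For the final assertion: when $\Zidx > 0$, the edge to the left of $t_\Zidx$ is labelled $\mod{\Zidx}$, and $\cv_{\mod{\Zidx};t_\Zidx} = c^\infty_{\le \Zidx}\,\alpha_{\mod{\Zidx}} = s_{i_1}\cdots s_{i_{\Zidx-1}} s_{i_\Zidx}\, \alpha_{i_\Zidx}$ where $i_\Zidx = \mod{\Zidx}$; since $s_{i_\Zidx}\alpha_{i_\Zidx} = -\alpha_{i_\Zidx}$, this equals $-\, c^\infty_{\le \Zidx-1}\,\alpha_{i_\Zidx}$, and as $c^\infty_{\le \Zidx-1}$ is reduced and does not send $\alpha_{i_\Zidx}$ to a negative root (a standard fact: $w\alpha_i < 0$ iff $\ell(ws_i) < \ell(w)$, which fails since $c^\infty_{\le\Zidx}$ is reduced), we conclude $\cv_{\mod{\Zidx};t_\Zidx}$ is a negative root. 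Dually $\cv_{\mod{\Zidx+1};t_\Zidx} = c^\infty_{\le\Zidx}\,\alpha_{\mod{\Zidx+1}}$ is a positive root precisely because appending $s_{\mod{\Zidx+1}}$ to the reduced word $c^\infty_{\le\Zidx}$ keeps it reduced. The case $\Zidx < -n$ is identical after replacing $c^\infty$ by $-c^{-\infty}$ and swapping the roles of positive and negative.

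The main obstacle I anticipate is not the inductive step itself but verifying cleanly that at each node along the path the $\bfc$-vector mutation genuinely coincides with a single simple reflection acting on the whole tuple — i.e.\ controlling the signs $[b^t_{ki}]_+$ versus $[-b^t_{ki}]_+$ so that the quadratic correction term in \eqref{eq:matrix mutation}-style $\bfc$-vector mutation collapses to the linear reflection formula. This requires knowing both the sign-coherence of $\bfc$-vectors (available from \cite{DWZ10,GHKK14}) and that the off-diagonal entries of $B^{t_\Zidx}$ never grow in absolute value, which follows from the sink/source adapted structure noted in the paragraph preceding the lemma but should be spelled out: mutating at a source or sink only reverses arrows incident to that vertex and never creates new arrows, so $|b^{t_\Zidx}_{ij}| = |a_{ij}|$ throughout. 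Once this bookkeeping is in place the rest is routine.
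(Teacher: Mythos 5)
The overall strategy (induction along the sink/source adapted path, NZ12 c-vector mutation, then read off signs from reduced-word facts) does match the paper's proof, and the handling of the base cases $-n < \Zidx \le 0$ and of the final ``moreover'' assertion are fine. But the key inductive step for $\Zidx > 0$ is stated incorrectly in a way that would not survive being written out.

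You assert that the $\bfc$-vector mutation at $\mod{\Zidx}$ ``implements precisely left multiplication by $s_{\mod{\Zidx}}$ on all $\bfc$-vectors simultaneously,'' and conclude
$\cv_{i;t_\Zidx} = s_{\mod{\Zidx}}\, c^\infty_{\le \Zidx-1}\, \alpha_i = c^\infty_{\le \Zidx}\, \alpha_i$.
Both equalities here are false. Since $c^\infty_{\le\Zidx} = c^\infty_{\le\Zidx-1}\,s_{\mod{\Zidx}}$, with the new letter appended on the \emph{right}, the second equality would require $s_{\mod{\Zidx}}$ to commute with $c^\infty_{\le\Zidx-1}$, which is generically false. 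A concrete check: in type $A_2$ with $c = s_1 s_2$, your formula gives $\cv_{1;t_2} = s_2 s_1 \alpha_1 = -\alpha_1-\alpha_2$, whereas the correct value is $\cv_{1;t_2} = s_1 s_2 \alpha_1 = \alpha_2$. What actually happens is this: the sign-coherent c-vector mutation rule (in the form cited from \cite{NZ12}) gives $\cv_{i;t_\Zidx} = \cv_{i;t_{\Zidx-1}} - a_{\mod{\Zidx},i}\,\cv_{\mod{\Zidx};t_{\Zidx-1}}$, and by Weyl-invariance of the pairing this is reflection in the \emph{root} $\cv_{\mod{\Zidx};t_{\Zidx-1}} = c^\infty_{\le\Zidx-1}\alpha_{\mod{\Zidx}}$ --- i.e.\ the conjugate $c^\infty_{\le\Zidx-1}\,s_{\mod{\Zidx}}\,(c^\infty_{\le\Zidx-1})^{-1}$ of the simple reflection, \emph{not} $s_{\mod{\Zidx}}$ itself. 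Applying this reflection to $c^\infty_{\le\Zidx-1}\alpha_i$ collapses the conjugation and yields $c^\infty_{\le\Zidx-1}\,s_{\mod{\Zidx}}\,\alpha_i = c^\infty_{\le\Zidx}\alpha_i$, which is the desired answer. Equivalently: the induction appends $s_{\mod{\Zidx}}$ on the right of the Weyl group element applied to $\alpha_i$, not on the left of the previous $\bfc$-vector. The rest of your argument, including the treatment of $\Zidx < -n$ (which is cast as ``identical''), inherits the same gap; it needs the same correction before it is sound.
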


\begin{proof}
  We claim that for $\Zidx > 0$ the root $c^\infty_{\leq \Zidx}\, \alpha_{\mod{\Zidx+1}}$ is positive and the root $c^\infty_{\leq \Zidx}\, \alpha_{\mod{\Zidx}}$ is negative.
  Recall that for any reduced word $s_{i_1} \cdots s_{i_n}$, the root $s_{i_1} \cdots s_{i_{n-1}} \alpha_{i_n}$ is positive and the root $s_{i_1} \cdots s_{i_{n}} \alpha_{i_n}$ is negative \cite[VI \S 1.6, Corollary 2 of Proposition 17]{Bou02}.
  The positivity of $c^\infty_{\leq \Zidx}\, \alpha_{\mod{\Zidx+1}}$ then follows from the fact that  $c^\infty_{\leq \Zidx} s_{\mod{\Zidx+1}}$ is a reduced decomposition of $c^\infty_{\leq \Zidx+1}$, and the negativity of $c^\infty_{\leq \Zidx}\, \alpha_{\mod{\Zidx}}$ follows from the fact that $c^\infty_{\leq \Zidx-1}s_{\mod{\Zidx}}$ is a reduced decomposition of $c^\infty_{\leq \Zidx}$.
  Similarly, one checks that for $\Zidx < -n$ the root $-c^{-\infty}_{\le-\Zidx-n}\, \alpha_{\mod{\Zidx+1}}$ is positive and the root $-c^{-\infty}_{\le-\Zidx-n}\, \alpha_{\mod{\Zidx}}$ is negative.
  The second claim in the lemma will now follow once we prove the stated formulas for $\cv_{i;t_{\Zidx}}$.

  For the main claim there are three cases to consider.

  \noindent{\bf Case $-n \leq \Zidx \leq 0$:}
  We proceed by induction on $\Zidx$.
  For $\Zidx=0$ there is nothing to check.
  By construction, while the sign of $\cv_{\mod{\Zidx};t_\Zidx}$ is positive, the sign of the $\mod{\Zidx}$-th row of $B^{t_\Zidx}$ is negative.
  The $\bfc$-vectors at $t_{\Zidx-1}$ are therefore given by:
  \begin{equation*}
    \cv_{i;t_{\Zidx-1}}
    =
    \begin{cases}
      \cv_{i;t_\Zidx} & \text{if $i\neq \mod{\Zidx}$}\\
      -\cv_{\mod{\Zidx};t_\Zidx} & \text{if $i = \mod{\Zidx}$}
    \end{cases}
  \end{equation*}
  as desired.

  \noindent{\bf Case $\Zidx > 0$:}
  We again argue by induction on $\Zidx$ the base case still being $\Zidx=0$.
  Since the sign of $\cv_{\mod{\Zidx+1};t_\Zidx}$ and the sign of the $\mod{\Zidx+1}$-st row of $B^{t_\Zidx}$ are both positive, in view of \cite[Proposition 1.3]{NZ12}, the $\bfc$-vectors at $t_{\Zidx+1}$ are given by
  \begin{align*}
    \cv_{i;t_{\Zidx+1}}
    &=
    \cv_{i;t_\Zidx} - a_{\mod{\Zidx+1},i} \cv_{\mod{\Zidx+1};t_\Zidx}\\
    &=
    \cv_{i;t_\Zidx} - \frac{2(\alpha_{\mod{\Zidx+1}},\alpha_i)}{(\alpha_{\mod{\Zidx+1}},\alpha_{\mod{\Zidx+1}})} \cv_{\mod{\Zidx+1};t_\Zidx}\\
    &=
    \cv_{i;t_\Zidx} - \frac{2(\cv_{\mod{\Zidx+1};t_\Zidx},\cv_{i;t_\Zidx})}{(\cv_{\mod{\Zidx+1};t_\Zidx},\cv_{\mod{\Zidx+1};t_\Zidx})} \cv_{\mod{\Zidx+1};t_\Zidx}\\
    &=
    s_{\cv_{\mod{\Zidx+1};t_\Zidx}}\, \cv_{i;t_\Zidx},
  \end{align*}
  where the second to last equality holds because the pairing $(\cdot,\cdot)$ is invariant under the action of the Weyl group.
  Using the inductive hypotheses we can then conclude
  \[
    \cv_{i;t_{\Zidx+1}}
    =
    s_{\cv_{\mod{\Zidx+1};t_\Zidx}}\, \cv_{i;t_\Zidx}
    =
    c^\infty_{\le\Zidx} s_{\mod{\Zidx+1}} \left(c^\infty_{\le\Zidx}\right)^{-1} c^\infty_{\le\Zidx}\, \alpha_i
    =
    c^\infty_{\le\Zidx+1}\, \alpha_i.
  \]

  \noindent{\bf Case $\Zidx < -n$:}
  The argument is the same as the one used in the case $\Zidx>0$ with $\Zidx=-n$ as base for the induction.
  The only minor change is that this time we leverage the fact that both the sign of $\cv_{\mod{\Zidx};t_\Zidx}$ and the sign of the $\mod{\Zidx}$-th row of $B^{t_\Zidx}$ are negative.
\end{proof}

As a corollary of the preceding result,  we obtain the $\bfg$-vectors along the path \eqref{eq:simplepath} by applying Equation~\eqref{eq:g-vectors_from_c-vectors}.
Recall that we regard $\bfg$-vectors as elements of the weight lattice $P$, the initial $\bfg$-vectors $\gv_{i;t_0}$ being identified with the fundamental weights $\omega_i$.

\begin{lemma}
  \label{lemma:g-vectors}
  Let $\gv_{i;t_\Zidx}$ be the $i$-th $\bfg$-vector at $t_\Zidx$.
  Then
  \begin{equation}
    \label{eq:abeltgvectors}
    \gv_{i;t_\Zidx}
    =
    \begin{cases}
      c^\infty_{\le\Zidx}\, \omega_i & \text{if $0<\Zidx$;}\\
      \omega_i & \text{if $-n < \Zidx \leq 0$ and  $i\leq n+\Zidx$;} \\
      -\omega_i & \text{if $-n\leq \Zidx < 0$ and $i>n+\Zidx$;} \\
      -c^{-\infty}_{\le-\Zidx-n}\, \omega_i \qquad & \text{if $\Zidx <-n$.}\\
    \end{cases}
  \end{equation}
  In particular, the $\bfg$-vectors of the preprojective cluster variables are
  \[
    \Pi_{\textit{proj}}:=\big\{c^k\omega_i: i \in [1,n], k>0\big\},
  \]
  and the $\bfg$-vectors of the postinjective cluster variables are
  \[
  \Pi_{\textit{inj}}:=\big\{-c^{-k}\omega_i: i \in [1,n], k\geq0 \big\}.
  \]
\end{lemma}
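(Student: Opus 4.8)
The plan is to deduce the formula \eqref{eq:abeltgvectors} from Lemma~\ref{lemma:c-vectors} by feeding it into the identity $(G^B_{t})^T = (C^{-B^T}_{t})^{-1}$ of Equation~\eqref{eq:g-vectors_from_c-vectors}, and then to extract the descriptions of $\Pi_{\textit{proj}}$ and $\Pi_{\textit{inj}}$ by reindexing. The first observation is that $-B^T$ is again acyclic with respect to the same ordering of its rows and columns, so it determines the same Coxeter element $c = s_1\cdots s_n$ and the mutation sequence \eqref{eq:simplepath} is again sink/source-adapted for it; its Cartan companion is $A^T$, so in the sense of Remark~\ref{rem:cgrootsweights} the $\bfc$-vector lattice of $-B^T$ is the coroot lattice of $A$, with $\cv_{i;t_0}$ the simple coroot $\cv_i^\vee$. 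Applying Lemma~\ref{lemma:c-vectors} verbatim to $-B^T$ then identifies $C^{-B^T}_{t_\Zidx}$, in the basis of simple coroots: for $\Zidx>0$ it is the matrix of the operator ``act by $c^\infty_{\le\Zidx}$'' on the coroot lattice; for $-n\le\Zidx\le0$ it is the diagonal $\pm1$ matrix whose signs are those recorded in \eqref{eq:c-vectors}; and for $\Zidx<-n$ it is the negative of the matrix of ``act by $c^{-\infty}_{\le-\Zidx-n}$''.

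The second step is a linear-algebra translation. The fundamental weights $\omega_1,\dots,\omega_n$ (the basis in which $\bfg$-vectors are recorded) and the simple coroots $\cv_1^\vee,\dots,\cv_n^\vee$ (the basis in which the $\bfc$-vectors of $-B^T$ are recorded) are dual bases for the $W$-invariant pairing $\langle-,-\rangle$; consequently, for any $w\in W$ the matrix of $w$ acting on the coroot lattice is the inverse transpose of the matrix of $w$ acting on the weight lattice. Substituting into \eqref{eq:g-vectors_from_c-vectors} therefore turns each of the three cases above into the corresponding case of \eqref{eq:abeltgvectors}: for $\Zidx>0$ one gets that $G^B_{t_\Zidx}$ is the matrix of ``act by $c^\infty_{\le\Zidx}$'' on the weight lattice, so its $i$-th column is $c^\infty_{\le\Zidx}\,\omega_i$, and the other ranges follow because a diagonal $\pm1$ matrix is its own inverse transpose and because passing to the inverse and the transpose commutes with multiplication by $-1$.

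For the ``in particular'' statements, recall from Proposition~\ref{prop:ccvsacyclicbelt} (and its proof) that as $\Zidx$ runs over the positive integers the cluster variable $x_{\mod{\Zidx};t_\Zidx}$ --- the one that changes at the $\Zidx$-th step of \eqref{eq:simplepath} --- runs over the cluster characters of all indecomposable preprojective representations, hence by Theorem~\ref{thm:CCbijection} over exactly the preprojective, and in particular non-initial, cluster variables; a short descent argument along \eqref{eq:simplepath} shows that every preprojective cluster variable arises this way. Writing $\Zidx = kn + \mod{\Zidx}$ with $k\ge0$, and using that $s_j$ fixes $\omega_{\mod{\Zidx}}$ for $j > \mod{\Zidx}$ (so that $s_1\cdots s_{\mod{\Zidx}}\,\omega_{\mod{\Zidx}} = c\,\omega_{\mod{\Zidx}}$), the formula just proved gives $\gv_{\mod{\Zidx};t_\Zidx} = c^\infty_{\le\Zidx}\,\omega_{\mod{\Zidx}} = c^{k+1}\omega_{\mod{\Zidx}}$; as $\Zidx$ ranges over the positive integers the pair $(k+1,\mod{\Zidx})$ ranges bijectively over $\{1,2,\dots\}\times[1,n]$, so these $\bfg$-vectors form precisely $\Pi_{\textit{proj}}$. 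The postinjective statement is the mirror image, obtained in the same way from the leftward half of \eqref{eq:simplepath}: the variable changing at the $\Zidx$-th step ($\Zidx\le-1$) is $x_{\mod{\Zidx+1};t_\Zidx}$, and using $c^{-1}\omega_r = s_n s_{n-1}\cdots s_r\,\omega_r$ one finds its $\bfg$-vector to be $-\omega_{\mod{\Zidx+1}}$ for $-n\le\Zidx\le-1$ and $-c^{-k}\omega_{\mod{\Zidx+1}}$ (for the appropriate $k\ge1$) when $\Zidx<-n$, exhausting $\Pi_{\textit{inj}}$ as $\Zidx$ runs over the negative integers.

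The step I expect to need the most care is the bookkeeping in the first two paragraphs: one must keep straight that \eqref{eq:g-vectors_from_c-vectors} takes as input the $\bfc$-vectors of $-B^T$ rather than of $B$, that these live in the coroot (Langlands dual root) lattice, and that the transition between the Weyl-group actions on the weight and coroot lattices is exactly the inverse-transpose operation appearing in \eqref{eq:g-vectors_from_c-vectors} --- after which everything is a transcription of Lemma~\ref{lemma:c-vectors}. A secondary point worth an explicit line is the reindexing identity $c^\infty_{\le kn+r}\,\omega_r = c^{k+1}\omega_r$ and its leftward analogue, which reconcile the two parametrizations of $\Pi_{\textit{proj}}$ and $\Pi_{\textit{inj}}$.
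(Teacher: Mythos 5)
Your proposal is correct and follows essentially the same route as the paper's proof: apply Lemma~\ref{lemma:c-vectors} to $-B^T$, interpret the resulting $\bfc$-vectors as coroots via the Cartan companion $A^T$, and then translate through Equation~\eqref{eq:g-vectors_from_c-vectors} using the duality between the bases $\{\alpha_i^\vee\}$ and $\{\omega_i\}$ to read off the $\bfg$-vectors as the stated Weyl-translates of fundamental weights. Your treatment of the ``in particular'' clause is somewhat more explicit than the paper's one-line appeal to $s_i\omega_j=\omega_j$ (you track which variable changes at each step of \eqref{eq:simplepath} and reindex), but it is the same computation and yields the same conclusion.
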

\begin{proof}
  The result follows from Lemma~\ref{lemma:c-vectors} by leveraging the duality between the coroot and weight lattices for $A$.
  Indeed, Lemma~\ref{lemma:c-vectors} applied to $-B^T$ can be interpreted as saying that the $\bfc$-vectors for the dual cluster algebra are coroots of $A$; more precisely, at the node $t_\Zidx$, they are the coroots $\cv_{i;t_\Zidx}^\vee$.
  By Equation~\eqref{eq:g-vectors_from_c-vectors} the $\bfg$-vectors at $t_\Zidx$ are the basis of weights $\gv_{i;t_\Zidx}$ dual to $\cv_{i;t_\Zidx}^\vee$, hence the $\bfg$-vectors in Equation~\eqref{eq:abeltgvectors} are dual to the $\bfc$-vectors (for $-B^T$) in Equation~\eqref{eq:c-vectors}.

  The final claim now follows immediately from the fact that $s_i \omega_j = \omega_j$ unless $i =j$: for example, if $\Zidx > 0$ and $k$ is the smallest integer such that $kn \geq \Zidx$, we have $c^\infty_{\leq \Zidx} \omega_i =  c^k\omega_i$ if $\mod{\Zidx} \geq i$ and $c^\infty_{\leq \Zidx} \omega_i = c^{k-1}\omega_i$ if $\mod{\Zidx} < i$.
\end{proof}

With the preceding results in hand we are now ready to give recursive formulas for preprojective and postinjective cluster variables.
Recall that we denote by $x_\omega$ the cluster variable whose $\bfg$-vector is $\omega$.
We will use the shorthands $\beta_i^+:=s_1\cdots s_{i-1}\alpha_i$ and $\beta_i^-:=s_n\cdots s_{i+1}\alpha_i$, and denote by $[\beta:\alpha_i]$ the $i$-th coefficient in the expansion of a root $\beta$ in the basis of simple roots.
That is, we have
\[
  \beta = \sum_{i=1}^n [\beta:\alpha_i]\alpha_i
\]

\begin{theorem}
  \label{thm:vars_and_rels_in_bipartite_belt}
  The following exchange relations are satisfied in $\cA_{\Bdp}$:
  \begin{align}
    x_{c^k\omega_i}x_{c^{k+1}\omega_i}
    &=
    \prod_{j<i} x_{c^{k+1}\omega_j}^{-a_{ji}}
    \prod_{j>i} x_{c^k\omega_j}^{-a_{ji}}
    +
    \prod_{j=1}^n (z_j z_{\ol{\jmath}})^{[c^k\beta_i^+:\alpha_j]}
    \label{eq:preprojective}
    \\
    x_{-c^{-k}\omega_i}x_{-c^{-k-1}\omega_i}
    &=
    \prod_{j<i} x_{-c^{-k}\omega_j}^{-a_{ji}}
    \prod_{j>i} x_{-c^{-k-1}\omega_j}^{-a_{ji}}
    +
    \prod_{j=1}^n (z_j z_{\ol{\jmath}})^{[c^{-k}\beta_i^-:\alpha_j]}
    \label{eq:postinjective}
    \\
    x_{\omega_i}x_{-\omega_i}
    &=
    z_iz_{\ol{\imath}}
    \prod_{j<i} x_{\omega_j}^{-a_{ji}}
    \prod_{j>i} x_{-\omega_j}^{-a_{ji}}
    +
    1\label{eq:plusminus}
  \end{align}
  These relations determine all preprojective and postinjective cluster variables in terms of the initial ones.
\end{theorem}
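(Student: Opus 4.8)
The plan is to recognize \eqref{eq:preprojective}, \eqref{eq:postinjective}, and \eqref{eq:plusminus} as the exchange relations of $\cA_{\Bdp}$ along the path \eqref{eq:simplepath}, rewritten in the $\bfg$-vector labeling supplied by Lemma~\ref{lemma:g-vectors}. The edge joining $t_\Zidx$ to $t_{\Zidx+1}$ carries the mutation in direction $\mod{\Zidx+1}$, so the associated exchange relation is controlled entirely by the $\mod{\Zidx+1}$-st column of $\Bdp^{t_\Zidx}$, which we analyze by splitting into its mutable and frozen parts. Because the whole path is sink/source adapted --- as recalled after \eqref{eq:simplepath}, the off-diagonal entries of $B^{t_\Zidx}$ have the same absolute values as those of $A$, and the $\mod\Zidx$-th and $\mod{\Zidx+1}$-st columns are respectively non-negative and non-positive --- the mutable part of the relevant column coincides with a column of $A$ up to an overall sign, and therefore contributes a monomial $\prod_{j\ne i}x_{j;t_\Zidx}^{-a_{ji}}$ (with $i$ the mutation direction) to exactly one side of the relation. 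The frozen part is two stacked copies of the matrix $C^B_{t_\Zidx}$ of $\bfc$-vectors: both coefficient blocks of $\Bdp^{t_0}$ equal $Id_n$ and they mutate identically, since the mutation rule for a frozen row involves only the mutable rows. Hence the frozen contribution is $\prod_j(z_j z_{\ol{\jmath}})^{[\pm\cv_{i;t_\Zidx}:\alpha_j]_+}$, and by Lemma~\ref{lemma:c-vectors} the vector $\cv_{i;t_\Zidx}$ is plus or minus a prefix of $c^\infty$ or $c^{-\infty}$ applied to a simple root, hence a positive or a negative root; so this monomial lands entirely on one side of the relation and the operation $[\,\cdot\,]_+$ can be dropped.

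Granting these two observations, writing down the exchange relation for a given edge is a direct bookkeeping computation. For $\Zidx = kn + i - 1 \ge 0$ the edge from $t_\Zidx$ to $t_{\Zidx+1}$ is a mutation at the source $i = \mod{\Zidx+1}$; using $s_j\omega_m = \omega_m$ for $m\ne j$ one checks that $c^\infty_{\le\Zidx}\omega_j = c^{k+1}\omega_j$ for $j<i$ and $= c^k\omega_j$ for $j>i$, while $\gv_{i;t_\Zidx} = c^k\omega_i$ and $\gv_{i;t_{\Zidx+1}} = c^{k+1}\omega_i$, and Lemma~\ref{lemma:c-vectors} gives $\cv_{i;t_\Zidx} = c^k\beta_i^+$; substituting these into the exchange relation yields exactly \eqref{eq:preprojective} (the $x$-monomial and the $z$-monomial simply appear in the opposite order from the convention fixed in Section~\ref{sec:clusteralgebras}). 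The mirror computation, with $c^{-\infty}$ in place of $c^\infty$ and run on the edges from $t_\Zidx$ to $t_{\Zidx-1}$ for $\Zidx < -n$, yields \eqref{eq:postinjective} for $k \ge 1$. The two boundary relations require the $-n \le \Zidx \le 0$ branches of Lemmas~\ref{lemma:c-vectors} and~\ref{lemma:g-vectors}: the edge from $t_{i-n}$ to $t_{i-n-1}$ is a mutation at the sink $i$, has $\cv_{i;t_{i-n}} = \alpha_i$ (so the frozen contribution is $z_i z_{\ol{\imath}}$, on the same side as the $x$-monomial, while the other side is empty), and $x_{i;t_{i-n}} = x_{\omega_i}$, $x_{i;t_{i-n-1}} = x_{-\omega_i}$, giving \eqref{eq:plusminus}; the edge producing \eqref{eq:postinjective} at $k=0$ is the one from $t_{i-2n}$ to $t_{i-2n-1}$, where $\cv_{i;t_{i-2n}}$ equals $-\beta_i^-$ for $i<n$ and $-\alpha_n$ for $i=n$, matching the stated coefficient.

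For the final assertion that these relations determine all the preprojective and postinjective cluster variables in terms of the initial ones: by the description of the acyclic belt in Section~\ref{sec:clusteralgebras} (using \cite[Corollary 4]{CK06}) together with Lemma~\ref{lemma:g-vectors}, every such variable is one of the $x_{c^k\omega_i}$ with $k\ne 0$ or one of the $x_{-\omega_i}$. These are produced one at a time as we march along \eqref{eq:simplepath}: \eqref{eq:plusminus} expresses $x_{-\omega_i}$ in terms of the initial cluster, \eqref{eq:preprojective} at $k=0$ expresses $x_{c\omega_i}$ in terms of initial data and previously produced variables, and each higher instance of \eqref{eq:preprojective} or \eqref{eq:postinjective} expresses its unique new factor $x_{c^{k+1}\omega_i}$ or $x_{-c^{-k-1}\omega_i}$ through variables already obtained; since $\cA_{\Bdp}$ is an integral domain and all variables involved are nonzero, this recursion is well defined and has a unique solution.

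I expect the main obstacle to be organizational rather than conceptual: keeping the index bookkeeping under control --- matching each pair $(i,k)$ in \eqref{eq:preprojective}--\eqref{eq:postinjective} to the correct node $t_\Zidx$ and mutation direction, translating each raw label $x_{j;t_\Zidx}$ into the right $c^{\bullet}\omega_j$-labelled variable (with the split occurring at $j=i$), and tracking on which side of each exchange relation the $x$-monomial versus the $z$-monomial lands. The boundary cases $-n\le\Zidx\le 0$, where the formulas in Lemmas~\ref{lemma:c-vectors} and~\ref{lemma:g-vectors} change shape, must be treated separately by hand.
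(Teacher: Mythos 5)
Your argument matches the paper's proof in approach and content: both derive the relations directly from Lemmas~\ref{lemma:c-vectors} and~\ref{lemma:g-vectors} together with the sink/source-adapted column signs of $B^{t_\Zidx}$ and the signs of the relevant $\bfc$-vectors, which together determine on which side of each exchange relation the coefficient monomial lands. Two small terminological slips that do not affect your computation: under the quiver convention of Section~\ref{sec:clusteralgebras}, the mutation from $t_\Zidx$ to $t_{\Zidx+1}$ for $\Zidx \ge 0$ is at a \emph{sink} and the mutation from $t_\Zidx$ to $t_{\Zidx-1}$ is at a \emph{source} (cf.\ Proposition~\ref{prop:ccvsacyclicbelt}), and the postinjective variables are $x_{-c^{-k}\omega_i}$ with $k\ge 0$, not ``$x_{c^k\omega_i}$ with $k<0$'' --- though your subsequent description of the recursion handles them correctly.
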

\begin{proof}
  The relations are a direct consequence of Lemmas~\ref{lemma:c-vectors} and~\ref{lemma:g-vectors} once we observe that mutations happen along a sink/source adapted sequence, so that the $\mod{\Zidx}$-th column of $B^{t_\Zidx}$ contains only non-negative entries and the $\mod{\Zidx+1}$-th column only non-positive entries.
  The claim about the signs of the $\bfc$-vectors $\cv_{\mod{\Zidx};t_\Zidx}$ and $\cv_{\mod{\Zidx+1};t_\Zidx}$ in Lemma~\ref{lemma:c-vectors} pinpoints which monomial on the right hand side of each of these exchange relations contains coefficients.
\end{proof}
\begin{remark}
  As noted before, to obtain the exchange relations in the cluster algebra $\cA_{\Bpr}$ with principal coefficients it suffices to set the coefficients $\cvar_{\ol{1}},\dotsc,\cvar_{\ol{n}}$ to $1$ in the relations of Theorem~\ref{thm:vars_and_rels_in_bipartite_belt}.
\end{remark}

\subsection{Highest- and lowest-weight minors}
\label{sec:hlwminors}

Before proving the main result of the section, Theorem~\ref{thm:mainirregular}, we collect some lemmas concerning restrictions of minors to $G^{c,c^{-1}}$.
All of the arguments involved are straightforward generalizations of those in \cite{YZ08} to the infinite-type case.

The first lemma computes the evaluations of various minors of interest in terms of the factorization of a generic element of $G^{c,c^{-1}}$ into one-parameter subgroups.
Recall that the initial cluster variables in $\kk[G^{c,c^{-1}}]$ are restrictions of
\[
  x_i = \Delta_{\omega_i},
  \quad
  \cvar_i = \Delta^{\omega_i}_{ c\omega_i} \prod_{j < i}(\Delta^{\omega_j}_{c \omega_j})^{a_{ji}},
  \quad
  \cvar_{\ol{\imath}} = \Delta^{c \omega_i}_{\omega_i} \prod_{j < i}(\Delta^{c \omega_j}_{\omega_j})^{a_{ji}},
\]
for $1 \leq j \leq n$.
Recall also that we write $\beta_i^+:=s_1\cdots s_{i-1}\alpha_i$ and $\beta_i^-:=s_n\cdots s_{i+1}\alpha_i$, and denote by $[\beta:\alpha_i]$ the $i$-th coefficient in the expansion of a root $\beta$ in the basis of simple roots.

\begin{lemma}
  \label{lemma:coefficients_values}
  If $g \in G^{c,c^{-1}}$ factors as
  \begin{equation}
    \label{eq:generic_factorization}
    g = x_{\ol{1}}(t_{\ol{1}}) \cdots x_{\ol{n}}(t_{\ol{n}}) h x_n(t_n) \cdots x_1(t_1)
  \end{equation}
  for $h \in H$ and $t_i \in \kk^\times$, then
  \begin{gather}
    \Delta^{c^k\omega_j}_{c^{k+1}\omega_j}(g) = h^{c^k\omega_j}\prod_{i=1}^n t_{i}^{[c^k\beta_j^+:\alpha_i]},
    \quad
    \Delta^{c^{k+1}\omega_j}_{c^k\omega_j}(g) = h^{c^k\omega_j}\prod_{i=1}^n t_{\ol{\imath}}^{[c^k\beta_j^+:\alpha_i]},
    \label{eq:positive_cminor_values}\\
    \Delta^{-c^{-k-1}\omega_j}_{-c^{-k}\omega_j}(g) = h^{-c^{-k-1}\omega_j}\prod_{i=1}^n t_{i}^{[c^{-k}\beta_j^-:\alpha_i]},
    \quad
    \Delta^{-c^{-k}\omega_j}_{-c^{-k-1}\omega_j}(g) = h^{-c^{-k-1}\omega_j}\prod_{i=1}^n t_{\ol{\imath}}^{[c^{-k}\beta_j^-:\alpha_i]},
    \label{eq:negative_cminor_values}\\
    \cvar_i(g) = t_i h^{\omega_i}\prod_{ j <i}h^{a_{ j  i}\omega_ j },
    \quad
    \cvar_{\ol{\imath}}(g) = t_{\ol{\imath}} h^{\omega_i}\prod_{ j <i}h^{a_{ j  i}\omega_ j }.
    \label{eq:frozen_values}
  \end{gather}
\end{lemma}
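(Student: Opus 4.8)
The plan is to compute each minor by directly applying the factorization \eqref{eq:generic_factorization} and exploiting how the one-parameter subgroups $x_i(t)$ and $x_{\ol{\imath}}(t)$ act on the relevant extremal weight vectors. The key observation is that for a dominant weight $\lambda \in \widehat{P}^+$ one has $\Delta_\lambda(g) = h^\lambda$ on $g$ factored as in \eqref{eq:uvfactorization}, since each $x_i(t)$ (resp.\ $x_{\ol{\imath}}(t)$) fixes the highest-weight line of $V(\lambda)$ modulo lower (resp.\ higher) weight spaces and thus projects trivially onto $V(\lambda)_\lambda$. The minors $\Delta^{u\lambda}_{v\lambda}$ are then obtained by the defining translation $g \mapsto \Delta_\lambda(\ol{u}^{-1}g\ol{v})$, so the first step is to absorb the Weyl group representatives into the factorization and track which $x_i(t)$ or $x_{\ol{\imath}}(t)$ now contribute nontrivially.

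First I would establish \eqref{eq:frozen_values}. By Theorem~\ref{thm:coordring}, $\cvar_i = \Delta^{\omega_i}_{c\omega_i}\prod_{j<i}(\Delta^{\omega_j}_{c\omega_j})^{a_{ji}}$, so it suffices to compute $\Delta^{\omega_j}_{c\omega_j}(g) = \Delta_{\omega_j}(g\,\ol{c})$ (the representative $\ol{u}^{-1}$ for $u=e$ being trivial). Writing $\ol{c} = \ol{s_1}\cdots\ol{s_n}$ and using that $x_j(t_j)$ is the only factor among $x_n(t_n)\cdots x_1(t_1)$ whose interaction with $\ol{s_1}\cdots\ol{s_j}$ moves weight $\omega_j$ back to the $\omega_j$ line, a short $SL_2$ computation inside $\varphi_j$ gives $\Delta^{\omega_j}_{c\omega_j}(g) = t_j h^{\omega_j}$ (the $x_{\ol{\imath}}(t_{\ol{\imath}})$ factors act trivially on the highest weight). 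Taking the monomial combination over $j \le i$ yields the stated formula for $\cvar_i(g)$; the formula for $\cvar_{\ol{\imath}}(g)$ follows by the transpose/lowest-weight version using $\dol{c}$, replacing $t_j$ by $t_{\ol{\jmath}}$.

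Next I would handle \eqref{eq:positive_cminor_values}: here $\Delta^{c^k\omega_j}_{c^{k+1}\omega_j}(g) = \Delta_{\omega_j}(\dol{c^k}^{\,-1} g\, \ol{c^{k+1}})$ — wait, more carefully, for $\omega_j \in \widehat{P}^+$ the minor $\Delta^{u\omega_j}_{v\omega_j}$ is $g\mapsto \Delta_{\omega_j}(\ol{u}^{-1} g \ol{v})$, so we need $\ol{c^k}^{\,-1} g\, \ol{c^{k+1}}$. The extra $\ol{c^k}^{\,-1}$ on the left rearranges the $x_{\ol{\imath}}(t_{\ol{\imath}})$ factors via conjugation into $x_{\ol{\imath}}$'s twisted by powers of $c$, which act trivially on the \emph{highest}-weight vector $v_{\omega_j}$; thus only the right side $h\, x_n(t_n)\cdots x_1(t_1)\,\ol{c^{k+1}}$ contributes. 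Expanding $\ol{c^{k+1}}$ as a reduced word and commuting the $x_i(t_i)$ through, the coefficient of $v_{\omega_j}$ in the image of $v_{\omega_j}$ is $h^{c^k\omega_j}$ times a monomial in the $t_i$; identifying the exponent requires recognizing that $\omega_j - c^k\omega_j = c^k\beta_j^+ \cdot(\text{something})$ — precisely, the product of the simple-root subgroups traversed is governed by the factorization $c^{k+1}\omega_j \mapsto c^k\omega_j$, whose "defect" is the root $c^k\beta_j^+$ with $\beta_j^+ = s_1\cdots s_{j-1}\alpha_j$, giving the exponent $[c^k\beta_j^+:\alpha_i]$ on $t_i$. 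The companion identity for $\Delta^{c^{k+1}\omega_j}_{c^k\omega_j}$ comes from the opposite factorization (now the $x_{\ol{\imath}}$ factors contribute and the $x_i$ factors act trivially), producing the same monomial in the barred variables $t_{\ol{\imath}}$. Finally \eqref{eq:negative_cminor_values} is the exact mirror image, using that $-c^{-k}\omega_j \in -\widehat{P}^+$ conjugates, the lowest-weight realization $\Delta_\lambda(\dol{u}^{-1} g \dol{v})$, and $\beta_j^- = s_n\cdots s_{j+1}\alpha_j$ playing the role of $\beta_j^+$ for the opposite Coxeter word $c^{-\infty}$.

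The main obstacle will be the bookkeeping in the middle step: correctly tracking, after inserting the Weyl group representatives $\ol{c^k}$ (resp.\ $\dol{c^{-k}}$), which one-parameter subgroups survive projection onto the one-dimensional weight space $V(\omega_j)_{c^k\omega_j}$, and matching the resulting product of $t_i$'s with the combinatorial exponent $[c^k\beta_j^+:\alpha_i]$. The cleanest way to organize this is to induct on $k$ using the reduced-word structure of $c^\infty$ (so that passing from $c^k$ to $c^{k+1}$ adjoins one more block $s_1\cdots s_n$ and shifts $\beta_j^+$ by $c$), reducing every step to a rank-one computation in a single $\varphi_i(SL_2)$; this is exactly the infinite-type analogue of the argument in \cite[Section 4]{YZ08}, and no genuinely new phenomenon arises beyond verifying that the relevant prefixes of $c^{\pm\infty}$ remain reduced, which was already recorded above.
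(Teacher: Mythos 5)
Your overall strategy — tracking the action of one-parameter subgroups on extremal weight vectors, following the argument of \cite[Proof of Eq.\ 3.20]{YZ08} — is the same as the paper's, and your outline for \eqref{eq:positive_cminor_values} and \eqref{eq:negative_cminor_values} is in the right spirit (modulo being careful that both positive \emph{and} negative $c^{\pm k}\alpha_i$ can occur; the claim that the conjugated $x_{\ol{\imath}}$ factors preserve the highest-weight component still goes through because $v_{\omega_j}$ is a highest-weight vector for all of $G$, not just one $SL_2$).

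The genuine error is in your derivation of \eqref{eq:frozen_values}. You claim $\Delta^{\omega_j}_{c\omega_j}(g) = t_j h^{\omega_j}$ on the grounds that $x_j(t_j)$ is the only factor that can raise $c\omega_j$ back to $\omega_j$. This is false in general: $\ol{c}v_{\omega_j}$ has weight $c\omega_j = \omega_j - \beta_j^+$ with $\beta_j^+ = s_1\cdots s_{j-1}\alpha_j$, and climbing back to the highest-weight line requires raising by the \emph{full} root $\beta_j^+$, which typically involves several simple roots. Concretely, if $n=3$, $j=2$, and $a_{12}=-1$, then $\beta_2^+ = \alpha_1+\alpha_2$, and both $x_1(t_1)$ and $x_2(t_2)$ contribute: $\Delta^{\omega_2}_{c\omega_2}(g) = t_1 t_2 h^{\omega_2}$, not $t_2 h^{\omega_2}$. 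The correct intermediate identity is the $k=0$ case of \eqref{eq:positive_cminor_values}, namely $\Delta^{\omega_j}_{c\omega_j}(g) = h^{\omega_j}\prod_\ell t_\ell^{[\beta_j^+:\alpha_\ell]}$.

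This matters downstream: with your shortcut, the monomial defining $\cvar_i(g)$ acquires an extraneous factor $\prod_{j<i}t_j^{a_{ji}}$ and the lemma's formula would not hold. To actually obtain $\cvar_i(g) = t_i h^{\omega_i}\prod_{j<i}h^{a_{ji}\omega_j}$ from the correct form of the minors, one must collapse the exponents of the $t_\ell$'s using the telescoping identity
\[
  \beta_i^+ + \sum_{j<i}a_{ji}\beta_j^+ = \alpha_i,
\]
which is precisely the nontrivial combinatorial content of this step and the reason the paper proves \eqref{eq:positive_cminor_values} \emph{before} deducing \eqref{eq:frozen_values}. Your reversal of the order hides exactly where the work lies; once the intermediate minor is computed correctly, \eqref{eq:frozen_values} is not a one-liner but depends on this telescoping cancellation.
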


\begin{proof}
  The first four identities follow easily from considering root strings in $V(\omega_j)$ as in \cite[Proof of Equation 3.20]{YZ08}, and we omit an explicit discussion.
  For example, if $v_{\omega_j}$ is a highest-weight vector of $V(\omega_j)$, then $\Delta^{c\omega_j}_{\omega_j}(g)$ measures the component of $gv_{\omega_j}$ in the weight space $V(\omega_j)_{c \omega_j}$; sequentially considering how each one-parameter subgroup in the factorization can affect this component leads to the given formula.

  We derive the first identity of Equation~\eqref{eq:frozen_values}, the second being similar.
  Plugging Equation~\eqref{eq:positive_cminor_values} into the definition of $\cvar_i(g)$ we obtain
  \[
    \cvar_i(g)
    =
    \left(h^{\omega_i}\prod_{\ell =1}^n t_{\ell}^{[\beta_i^+:\alpha_\ell]}\right)
    \prod_{j<i}\left(h^{\omega_j}\prod_{\ell=1}^n t_{\ell}^{[\beta_j^+:\alpha_\ell]}\right)^{a_{ji}}.
  \]
  To establish the desired identity we need to verify that
  \[
    [\beta_i^+:\alpha_\ell]+\sum_{j<i}a_{ji}[\beta_j^+:\alpha_\ell]
    =
    \begin{cases}
      1 & \text{ if $i=\ell$}\\
      0 & \text{ otherwise.}
    \end{cases}
  \]
  This in turn follows from
  \begin{equation}
    \label{eq:alpha_to_beta}
    \beta_i^++\sum_{j<i}a_{ji}\beta_j^+ = s_1\cdots s_{i-1}\alpha_i + \sum_{j<i}s_1\cdots s_{j-1}(\alpha_i - s_j\alpha_i)
    =
    \alpha_i,
  \end{equation}
  with the last equality given by collapsing the telescoping sum.
\end{proof}

\begin{lemma}
  \label{lemma:coefficient_identity_powers}
  For $g \in G^{c,c^{-1}}$, $k \geq 0$, and $1 \leq i \leq n$ we have
  \begin{equation*}
    \Delta_{c^{k+1} \omega_i}^{c^k \omega_i}(g)
    =
    \prod_{j=1}^nz_j(g)^{[c^k\beta_i^+:\alpha_j]},\quad
    \Delta_{c^k \omega_i}^{c^{k+1} \omega_i}(g)
    =
    \prod_{j=1}^nz_{\ol{\jmath}}(g)^{[c^k\beta_i^+:\alpha_j]}
  \end{equation*}
\end{lemma}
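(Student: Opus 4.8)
The plan is to express the restricted minors $\Delta_{c^{k+1}\omega_i}^{c^k\omega_i}$ and $\Delta_{c^k\omega_i}^{c^{k+1}\omega_i}$ in terms of the one-parameter factorization and then identify the result with the stated monomials in the frozen variables using Lemma~\ref{lemma:coefficients_values}. First I would recall from Equation~\eqref{eq:positive_cminor_values} that for a generic $g \in G^{c,c^{-1}}$ factored as in Equation~\eqref{eq:generic_factorization} we have
\[
  \Delta^{c^k\omega_i}_{c^{k+1}\omega_i}(g) = h^{c^k\omega_i}\prod_{\ell=1}^n t_{\ell}^{[c^k\beta_i^+:\alpha_\ell]},
  \qquad
  \Delta^{c^{k+1}\omega_i}_{c^k\omega_i}(g) = h^{c^k\omega_i}\prod_{\ell=1}^n t_{\ol{\ell}}^{[c^k\beta_i^+:\alpha_\ell]}.
\]
The claimed identity is then equivalent to the purely monomial assertion
\[
  h^{c^k\omega_i}\prod_{\ell=1}^n t_{\ol{\ell}}^{[c^k\beta_i^+:\alpha_\ell]}
  =
  \prod_{j=1}^n \cvar_j(g)^{[c^k\beta_i^+:\alpha_j]}
\]
(and the mirror statement with $t_\ell$ and $\cvar_{\ol{\jmath}}$), where $\cvar_j(g) = t_j h^{\omega_j}\prod_{\ell<j}h^{a_{\ell j}\omega_\ell}$ by Equation~\eqref{eq:frozen_values}. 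So the crux is to check that the product of the $h$-contributions on the right reassembles exactly to $h^{c^k\omega_i}$.

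The key computation is therefore to verify that
\[
  \sum_{j=1}^n [c^k\beta_i^+:\alpha_j]\Bigl(\omega_j + \sum_{\ell<j}a_{\ell j}\omega_\ell\Bigr) = c^k\omega_i
\]
as weights. I would approach this by unwinding the definition $\beta_i^+ = s_1\cdots s_{i-1}\alpha_i$ and using the telescoping identity already established in Equation~\eqref{eq:alpha_to_beta}, namely $\beta_i^+ + \sum_{\ell<i}a_{\ell i}\beta_\ell^+ = \alpha_i$. The point is that the linear map sending $\alpha_j \mapsto \omega_j + \sum_{\ell<j}a_{\ell j}\omega_\ell$ is, up to the identification of lattices, precisely the map realizing $\beta_j^+ \mapsto \omega_j$ composed with re-expressing things in the simple-root basis; more concretely, one checks that this map sends $\alpha_i$ to $\sum_{j\le i}(\text{something})\omega_j$ in a way compatible with $c^k$ acting by $s_1\cdots s_n$ repeatedly, since $c^k$ permutes the relevant root strings. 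Equivalently, I would argue that both sides, viewed as functions of $i$ and $k$, satisfy the same recursion under applying one factor of $c$: applying $s_1\cdots s_n$ to $c^k\omega_i$ gives $c^{k+1}\omega_i$, and on the $\bfc$-vector side the coefficients $[c^k\beta_i^+:\alpha_j]$ transform in the corresponding fashion because $c^{k}\beta_i^+ = c^k s_1\cdots s_{i-1}\alpha_i$ is exactly the $\bfc$-vector datum that Lemma~\ref{lemma:c-vectors} tracks. So the identity reduces to the $k=0$ case, which is Equation~\eqref{eq:alpha_to_beta} together with the defining relation $(t^{\alpha_i^\vee})^{\omega_j} = t^{\delta_{ij}}$ pairing weights against coroots.

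The main obstacle I anticipate is purely bookkeeping: making the reduction to $k=0$ precise requires carefully matching how $c^k$ acts on the weight $\omega_i$ with how the prefixes $c^\infty_{\le \ell}$ act on $\alpha_i$ (as in the proof of Lemma~\ref{lemma:c-vectors}), and in particular keeping straight whether $\mod{\Zidx}\ge i$ or $\mod{\Zidx}<i$ so that one knows whether the relevant power of $c$ is $k$ or $k-1$. Once that indexing is pinned down the algebra is routine. An alternative, cleaner route that avoids the recursion entirely is to observe that the assignment $\alpha_j \mapsto \omega_j + \sum_{\ell<j}a_{\ell j}\omega_\ell$ extends the isomorphism $\beta_j^+\mapsto \omega_j$ to a $W$-equivariant isomorphism after we note $\beta_j^+$ runs over the positive roots ``created'' by the Coxeter element, and then the claim is simply $W$-equivariance applied to Equation~\eqref{eq:alpha_to_beta}; I would check equivariance on simple reflections directly. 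Either way the heart of the argument is the telescoping sum \eqref{eq:alpha_to_beta}, which is already in hand, so no genuinely new input is needed beyond Lemmas~\ref{lemma:c-vectors} and \ref{lemma:coefficients_values}.
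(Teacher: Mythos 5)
Your reduction to the weight-lattice identity
\[
  c^k\omega_i
  =
  \sum_{j=1}^n [c^k\beta_i^+:\alpha_j]\Big(\omega_j + \sum_{\ell<j}a_{\ell j}\omega_\ell\Big)
\]
via Lemma~\ref{lemma:coefficients_values} is exactly what the paper does, and you correctly identify the telescoping identity \eqref{eq:alpha_to_beta} and the resulting fact that the linear map $\Phi:\cQ\to P$ determined by $\alpha_j \mapsto \omega_j + \sum_{\ell<j}a_{\ell j}\omega_\ell$ is the same as the map $\beta_j^+\mapsto\omega_j$. But the heart of the argument --- why $\Phi$ intertwines the action of $c$ on $\cQ$ with that on $P$ --- is left as an assertion in your main route (``the coefficients $[c^k\beta_i^+:\alpha_j]$ transform in the corresponding fashion''); this is precisely what needs to be proved and is not a consequence of Lemma~\ref{lemma:c-vectors}. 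Worse, your proposed ``alternative cleaner route'' via $W$-equivariance of $\Phi$ is false: already in type $A_2$ with $c=s_1s_2$ one has $\Phi(s_1\alpha_1)=\Phi(-\alpha_1)=-\omega_1$ while $s_1\Phi(\alpha_1)=s_1\omega_1=-\omega_1+\omega_2$, so $\Phi$ fails to be $s_1$-equivariant. If you had actually carried out the check on simple reflections that you mention, you would have found this.

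The missing ingredient, which the paper supplies, is the identification $\Phi=(1-c)^{-1}$. One computes $(1-c)\omega_i=\omega_i - s_1\cdots s_i\omega_i = s_1\cdots s_{i-1}\alpha_i = \beta_i^+$, so $1-c$ takes $\omega_i$ to $\beta_i^+$; since the $\beta_i^+$ are a unitriangular change of basis from the $\alpha_i$, $1-c$ restricts to an isomorphism $P\xrightarrow{\sim}\cQ$, and its inverse is $\Phi$. Now $(1-c)^{-1}$ commutes with $c^k$ for the trivial reason that both are rational functions of $c$ --- this is the $c$-equivariance you need, and it is genuinely weaker than the $W$-equivariance you claimed. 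Applying $(1-c)^{-1}$ to the expansion $c^k\beta_i^+ = \sum_j [c^k\beta_i^+:\alpha_j]\big(\beta_j^+ + \sum_{\ell<j}a_{\ell j}\beta_\ell^+\big)$ then immediately yields the identity. There is also a subtlety you omit entirely: $1-c$ kills the $W$-fixed part of $\widehat{P}$ and so factors through $\widehat{P}\onto P$, which is why the final identity holds in $P$ but would generally fail if interpreted in $\widehat{P}$ (the splitting $P\into\widehat{P}$ is not $W$-equivariant). This point matters because the minors are naturally functions on $\widehat{G}$, and one must know one is free to work modulo the kernel of $\widehat{P}\onto P$ when restricting to $G$.
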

\begin{proof}
  We derive the first identity, the second being obtained in a similar way.
  It suffices to consider the case where $g$ factors as in Equation~\eqref{eq:generic_factorization}, since such elements are generic in $G^{c,c^{-1}}$.
  In the parameters of the factorization, Lemma~\ref{lemma:coefficients_values} lets us rewrite the desired identity as
  \[
    h^{c^k\omega_i}\prod_{j=1}^n t_{j}^{[c^k\beta_i^+:\alpha_j]}
    =
    \prod_{j=1}^n\Big( t_j h^{\omega_j}\prod_{ \ell <j}h^{a_{\ell j}\omega_ \ell }\Big)^{[c^k\beta_i^+:\alpha_j]}.
  \]
  Establishing this is in turn equivalent to showing that in $P$ we have
  \begin{equation}
    \label{eq:coxeter_identity_omega}
    c^k\omega_i
    =
    \sum_{j=1}^n
    [c^k\beta_i^+:\alpha_j]
    \Big(
    \omega_j+
    \sum_{\ell<j} a_{\ell j}\omega_\ell
    \Big).
  \end{equation}

  On the other hand, we have
  \begin{equation}
    \label{eq:coxeter_identity_beta}
    c^k\beta_i^+
    =
    \sum_{j=1}^n [c^k\beta_i^+:\alpha_j] \alpha_j
    =
    \sum_{j=1}^n [c^k\beta_i^+:\alpha_j] \Big(\beta_j^+ + \sum_{\ell<j} a_{\ell j}\beta_\ell^+\Big),
  \end{equation}
  where the second equality follows from Equation~\eqref{eq:alpha_to_beta}.
  Observe that the endomorphism $1-c$ of $\widehat{P}$, in addition to taking $\omega_i$ to $\beta_i^+$, factors through the natural projection $\widehat{P} \onto P$.
  Since the $\beta_i^+$ are linearly independent and generate the root lattice $\cQ$ (they are obtained from the $\alpha_i$ by a unitriangular transformation), $1-c$ restricts to an isomorphism of $P$ onto $\cQ$; with some abuse we denote its inverse as $(1-c)^{-1}: \cQ \to P$.
  Since $1-c$ commutes with the action of $c^{k}$ so does $(1-c)^{-1}$, hence applying $(1-c)^{-1}$ to Equation~\eqref{eq:coxeter_identity_beta} yields Equation~\eqref{eq:coxeter_identity_omega}.
  Note that Equation~\eqref{eq:coxeter_identity_omega} does not necessarily hold if interpreted in $\widehat{P}$, since in general a splitting $P \into \widehat{P}$ is not $W$-equivariant.
\end{proof}

\begin{lemma}
  \label{lemma:coefficients_identities}
  For $g \in G^{c,c^{-1}}$ and $1 \leq j \leq n$ we have
  \begin{equation*}
    \cvar_j(g)
    =
    \Delta_{-\omega_j}^{-c^{-1}\omega_j}(g)
    \prod_{\ell>j}\Big(\Delta_{-\omega_\ell}^{-c^{-1}\omega_\ell}(g)\Big)^{a_{\ell j}},
    \quad
    \cvar_{\ol{\jmath}}(g)
    =
    \Delta_{-c^{-1}\omega_j}^{-\omega_j}(g)
    \prod_{\ell>j}\Big(\Delta_{-c^{-1}\omega_\ell}^{-\omega_\ell}(g)\Big)^{a_{\ell j}}.
  \end{equation*}
\end{lemma}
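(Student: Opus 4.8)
The plan is to verify each of the two identities by evaluating both sides on a generic element $g\in G^{c,c^{-1}}$ factored as in Equation~\eqref{eq:generic_factorization}; since such elements are dense in the irreducible variety $G^{c,c^{-1}}$ and both sides are rational functions, this suffices. First I would write out the identity for $\cvar_j$ in detail, the one for $\cvar_{\ol{\jmath}}$ being entirely analogous with the parameters $t_{\ol{\imath}}$ in place of $t_i$ (using the second formulas of Equations~\eqref{eq:negative_cminor_values} and~\eqref{eq:frozen_values} and the minor $\Delta_{-c^{-1}\omega_\ell}^{-\omega_\ell}$ in place of $\Delta_{-\omega_\ell}^{-c^{-1}\omega_\ell}$).

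Specializing the $k=0$ case of Equation~\eqref{eq:negative_cminor_values} gives $\Delta_{-\omega_\ell}^{-c^{-1}\omega_\ell}(g)=h^{-c^{-1}\omega_\ell}\prod_{i=1}^n t_i^{[\beta_\ell^-:\alpha_i]}$, while Equation~\eqref{eq:frozen_values} gives $\cvar_j(g)=t_j\,h^{\omega_j+\sum_{m<j}a_{mj}\omega_m}$. Substituting the former into the right-hand side of the claimed identity, it collapses to
\begin{equation*}
  h^{-c^{-1}(\omega_j+\sum_{\ell>j}a_{\ell j}\omega_\ell)}\ \prod_{i=1}^n t_i^{[\,\beta_j^-+\sum_{\ell>j}a_{\ell j}\beta_\ell^-\,:\,\alpha_i\,]}.
\end{equation*}
Comparing with $\cvar_j(g)$, the lemma reduces to showing that $\beta_j^-+\sum_{\ell>j}a_{\ell j}\beta_\ell^-=\alpha_j$ in the root lattice $\cQ$ and that $-c^{-1}\big(\omega_j+\sum_{\ell>j}a_{\ell j}\omega_\ell\big)=\omega_j+\sum_{m<j}a_{mj}\omega_m$ in the weight lattice $P$.

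The first of these is the mirror image of Equation~\eqref{eq:alpha_to_beta}: writing $a_{\ell j}\alpha_\ell=\alpha_j-s_\ell\alpha_j$ and pulling $s_n\cdots s_{\ell+1}$ out of each summand makes the sum telescope to $\alpha_j-s_n\cdots s_{j+1}\alpha_j=\alpha_j-\beta_j^-$. For the second, I would first note that the computation behind $(1-c)\omega_i=\beta_i^+$ (used in the proof of Lemma~\ref{lemma:coefficient_identity_powers}), applied instead to the reduced word $s_n\cdots s_1$ of $c^{-1}$, yields $(1-c^{-1})\omega_i=\beta_i^-$. Combining this with the first identity (and with Equation~\eqref{eq:alpha_to_beta}) gives
\begin{equation*}
  (1-c^{-1})\Big(\omega_j+\sum_{\ell>j}a_{\ell j}\omega_\ell\Big)=\alpha_j=(1-c)\Big(\omega_j+\sum_{m<j}a_{mj}\omega_m\Big).
\end{equation*}
As in the proof of Lemma~\ref{lemma:coefficient_identity_powers}, $1-c$ restricts to an isomorphism of $P$ onto $\cQ$, hence is injective on $P$; since the formal operator identity $(1-c)(-c^{-1})=1-c^{-1}$ shows that $1-c$ sends both $-c^{-1}\big(\omega_j+\sum_{\ell>j}a_{\ell j}\omega_\ell\big)$ and $\omega_j+\sum_{m<j}a_{mj}\omega_m$ to $\alpha_j$, the two must be equal in $P$. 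As with Equation~\eqref{eq:coxeter_identity_omega}, this last identity need not hold if read in $\widehat P$, which is exactly why one must work in $P$.

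I expect this final weight-lattice manipulation to be the only delicate step, precisely because the splitting $P\into\widehat P$ is not $W$-equivariant; the remaining ingredients are a routine specialization of Lemma~\ref{lemma:coefficients_values} and a telescoping sum, parallel to the computations in \cite{YZ08}.
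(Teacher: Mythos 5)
Your proof is correct, and most of it mirrors the paper's: both arguments evaluate on a generic factored $g\in G^{c,c^{-1}}$, invoke Lemma~\ref{lemma:coefficients_values}, and reduce the claim to the two identities $\beta_j^-+\sum_{\ell>j}a_{\ell j}\beta_\ell^-=\alpha_j$ in $\cQ$ and Equation~\eqref{eq:c_inverse_equality} in $P$. Where you part ways with the paper is in proving Equation~\eqref{eq:c_inverse_equality}. The paper multiplies both sides on the left by $s_{j+1}\cdots s_n$, uses $s_i\omega_j=\omega_j$ ($i\neq j$) to collapse the two sums, and lands on the tautology $s_j\omega_j=\omega_j-\alpha_j$; it is a short, self-contained manipulation. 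You instead observe that $1-c^{-1}$ sends the left side to $\alpha_j$ (via the telescoping $\beta^-$ identity), $1-c$ sends the right side to $\alpha_j$ (via Equation~\eqref{eq:alpha_to_beta}), note the formal operator identity $(1-c)(-c^{-1})=1-c^{-1}$, and conclude by the injectivity of $1-c$ on $P$, which the paper has already established in the proof of Lemma~\ref{lemma:coefficient_identity_powers}. Your route is a bit more machinery-heavy but has the virtue of reusing a fact already in hand and of making transparent the symmetry between the $\beta^+$ and $\beta^-$ telescoping identities; the paper's route is shorter and avoids invoking the isomorphism. You are also right to flag that everything must be read in $P$, not $\widehat P$, since the splitting $P\into\widehat P$ is not $W$-equivariant -- that is the same caveat the paper records at the end of its argument.
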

\begin{proof}
  Again, we derive the first identity, the second being obtained in a similar fashion.
  If $g$ is generic and factored as in Equation~\eqref{eq:generic_factorization}, then in terms of factorization parameters the desired identity is
  \begin{equation}
    t_j  h^{\omega_j} \prod_{\ell<j}h^{a_{\ell j}\omega_\ell}
    =
    t_j  h^{-c^{-1}\omega_j} \prod_{\ell>j}h^{-a_{\ell j}c^{-1}\omega_\ell}.
  \end{equation}
  In rewriting the left-hand side we have used Equation~\eqref{eq:frozen_values}, while rewriting the right-hand side uses a similar computation involving the identity
  \[
    \beta_j^-+\sum_{\ell>j}a_{\ell j}\beta_\ell^-
    =
    \alpha_j.
  \]

  This equality of monomials in factorization parameters is then equivalent to the equality
  \begin{equation}
    \label{eq:c_inverse_equality}
    \omega_j + \sum_{\ell<j} a_{\ell j}\omega_\ell
    =
    -c^{-1}\left(\omega_j + \sum_{\ell>j} a_{\ell j}\omega_\ell\right)
  \end{equation}
  of weights in $P$.

  Multiplying both sides by $s_{j+1}\cdots s_n$ transforms Equation~\eqref{eq:c_inverse_equality} into
  \[
    s_{j+1}\cdots s_n\left(\omega_j + \sum_{\ell<j} a_{\ell j}\omega_\ell\right)
    =
    -s_js_{j-1}\cdots s_1\left(\omega_j + \sum_{\ell>j} a_{\ell j}\omega_\ell\right).
  \]
  Simplifying using the fact that $s_i\omega_j=\omega_j$ for $i\neq j$, then moving both summations to the right-hand side turns this into
  \begin{equation}
    s_j\omega_j
    =
    -\omega_j-\sum_{\ell\neq j}a_{\ell j}\omega_\ell
    =
    \omega_j-\alpha_j,
  \end{equation}
  which is the definition of $s_j\omega_j \in P$.
  Note that for this equality to hold when interpreted in $\widehat{P}$, additional terms in the kernel of $\widehat{P} \onto P$ must in general be added to the right-hand side.
\end{proof}

In proving the following theorem we make use of the involutive anti-automorphism $g \mapsto g^\iota$ of $G$ defined by
\begin{equation}
  h^\iota := h^{-1}\,\, (h\in H),
  \quad
  \quad
  x_i(t)^\iota := x_i(t),
  \quad
  \quad
  x_{\ol{\imath}}(t)^\iota := x_{\ol{\imath}}(t).
\end{equation}

Since $\iota$ preserves $B_+$ and $B_-$ but takes any lift of $w \in W$ to a lift of $w^{-1}$, it restricts to a biregular isomorphism between $G^{u,v}$ and $G^{u^{-1},v^{-1}}$.
Since a generic element of $G$ factors as $n_- h n_+$, with $n_- \in N_-$, $h \in H$, $n_+ \in N_+$, we have $\Delta_{\omega_j}(g) = \Delta_{-\omega_j}(g^\iota)$ for all $g \in G$.

\begin{theorem}
  \label{thm:mainirregular}
  The isomorphism $\cA_{\Bdp} \cong \kk[G^{c,c^{-1}}]$ of Theorem~\ref{thm:coordring} identifies the preprojective cluster variable $x_{c^k\omega_i}$ with the restriction of the highest-weight minor $\Delta_{c^k\omega_i}$ and identifies the postinjective cluster variable $x_{-c^k \omega_i}$ with the restriction of the lowest-weight minor $\Delta_{-c^k \omega_i}$.
\end{theorem}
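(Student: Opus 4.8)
The plan is to derive the theorem from the recursive description of the preprojective and postinjective cluster variables in Theorem~\ref{thm:vars_and_rels_in_bipartite_belt} together with the generalized determinantal identity of Proposition~\ref{prop:fundid}. Since $\Bdp$ has full rank a cluster variable is determined by its $\bfg$-vector, so by Lemma~\ref{lemma:g-vectors} it is enough to show that the restrictions to $G^{c,c^{-1}}$ of the highest-weight minors $\Delta_{c^k\omega_i}$ and the lowest-weight minors $\Delta_{-c^{-k}\omega_i}$ satisfy the exchange relations \eqref{eq:preprojective}, \eqref{eq:postinjective}, \eqref{eq:plusminus}, with the base data $x_{\omega_i}=\Delta_{\omega_i}$, $\cvar_i=\Delta^{\omega_i}_{c\omega_i}\prod_{j<i}(\Delta^{\omega_j}_{c\omega_j})^{a_{ji}}$, $\cvar_{\ol\imath}=\Delta^{c\omega_i}_{\omega_i}\prod_{j<i}(\Delta^{c\omega_j}_{\omega_j})^{a_{ji}}$ supplied by Theorem~\ref{thm:coordring}. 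Granting this, the identification follows by induction along the path \eqref{eq:simplepath}: each exchange relation expresses the newly produced variable in terms of variables appearing earlier along the path (outer induction on $|\Zidx|$, inner induction on the position within each cluster), and since each $\Delta_{c^k\omega_i}$ is a nonzero element of the domain $\kk[G^{c,c^{-1}}]$ the relation can be solved for it.

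For the preprojective relation \eqref{eq:preprojective} I would apply Proposition~\ref{prop:fundid} with $u=v=w_{k,i}:=c^ks_1\cdots s_{i-1}$ and twist index $i$; here $w_{k,i}$ is reduced, being a prefix of $c^\infty$, and $\ell(w_{k,i})<\ell(w_{k,i}s_i)$ for the same reason. Using $w_{k,i}\omega_i=c^k\omega_i$, $w_{k,i}s_i\omega_i=c^{k+1}\omega_i$, and $w_{k,i}\omega_j=c^{k+1}\omega_j$ for $j<i$ while $w_{k,i}\omega_j=c^k\omega_j$ for $j>i$ — together with the observation that $\bar s_1\cdots\bar s_{i-1}$ fixes every highest-weight vector $v_{\omega_j}$ it acts on, so that each diagonal minor $\Delta^{w\omega_j}_{w\omega_j}$ really does equal $\Delta_{w\omega_j}$ — the left side of \eqref{eq:fundid} becomes $\Delta_{c^k\omega_i}\Delta_{c^{k+1}\omega_i}$ and the first term on the right becomes $\prod_{j<i}\Delta_{c^{k+1}\omega_j}^{-a_{ji}}\prod_{j>i}\Delta_{c^k\omega_j}^{-a_{ji}}$. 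The final term is $\Delta^{c^k\omega_i}_{c^{k+1}\omega_i}\Delta^{c^{k+1}\omega_i}_{c^k\omega_i}$, which by Lemma~\ref{lemma:coefficient_identity_powers} restricts on $G^{c,c^{-1}}$ to $\prod_j z_j^{[c^k\beta_i^+:\alpha_j]}\prod_j z_{\ol\jmath}^{[c^k\beta_i^+:\alpha_j]}=\prod_j(z_jz_{\ol\jmath})^{[c^k\beta_i^+:\alpha_j]}$. Thus the restriction of \eqref{eq:fundid} to $G^{c,c^{-1}}$ is exactly \eqref{eq:preprojective}, which completes the inductive step (the base case $k=0$ uses the initial identifications of Theorem~\ref{thm:coordring}).

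For the postinjective side I would pass to lowest-weight minors through the anti-automorphism $\iota$. Since $f\mapsto f\circ\iota$ is an algebra automorphism of $\kk[G]$, applying it to the identity of Proposition~\ref{prop:fundid} — using $\Delta_{\omega_j}\circ\iota=\Delta_{-\omega_j}$ and the fact that each $\bar s_i$ and $\dol s_i$ is fixed by $\iota$, so $\bar w^\iota=\overline{w^{-1}}$ — yields the analogous determinantal identity among the lowest-weight minors $\Delta^{u(-\omega_j)}_{v(-\omega_j)}$, valid on all of $G$ and hence on $G^{c,c^{-1}}$. Restricting the $\iota$-transforms of the choices $u=v=c^{-k}s_n\cdots s_{i+1}$ and $u=v=s_n\cdots s_{i+1}$ and identifying the coefficient monomials using \eqref{eq:negative_cminor_values}, \eqref{eq:frozen_values} and Lemma~\ref{lemma:coefficients_identities} exactly as in the preprojective case produces \eqref{eq:postinjective} and \eqref{eq:plusminus}. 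Equivalently one can argue more conceptually: $\iota$ restricts to a biregular isomorphism $G^{c,c^{-1}}\xrightarrow{\sim}G^{c^{-1},c}$, the latter is the Coxeter double Bruhat cell of the opposite quiver $Q^{\mathrm{op}}$ (with Coxeter element $c^{-1}$), and $g\mapsto g^\iota$ intertwines the Berenstein--Fomin--Zelevinsky cluster structures on these two cells in the manner matching $t_{-n}$ of $\cA_{\Bdp}$ with the initial seed for $Q^{\mathrm{op}}$; hence the postinjective statement for $Q$ is precisely the already-established preprojective statement for $Q^{\mathrm{op}}$ transported through $\iota$ (using $\Delta_{(c^{-1})^k\omega_i}\circ\iota=\Delta_{-c^{-k}\omega_i}$). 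Finally the $\bfg$-vector assertion is immediate, as $x_{c^k\omega_i}$ has $\bfg$-vector $c^k\omega_i$ by Lemma~\ref{lemma:g-vectors}, which is by definition the weight of $\Delta_{c^k\omega_i}$, and similarly for $\Delta_{-c^{-k}\omega_i}$.

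The main difficulty throughout is the bookkeeping of Weyl group lifts and the scalars they introduce into minors: because the isomorphism of Theorem~\ref{thm:coordring} is fixed (sending $x_{\omega_i}$ to $\Delta_{\omega_i}$ on the nose, $\cvar_i$ to the stated monomial in minors, and so on), the identities produced by Proposition~\ref{prop:fundid} and its $\iota$-transform must reproduce \eqref{eq:preprojective}, \eqref{eq:postinjective} and \eqref{eq:plusminus} exactly, with no stray signs or torus factors. Verifying this — that $\Delta^{w\omega_j}_{w\omega_j}=\Delta_{w\omega_j}$ for the relevant $w$, that the coefficient products collapse as in Lemmas~\ref{lemma:coefficient_identity_powers} and~\ref{lemma:coefficients_identities}, and in particular isolating the precise choice of $u,v$ whose $\iota$-transform yields the boundary relation \eqref{eq:plusminus} — is where the care lies; the underlying Coxeter-combinatorial facts (such as $(1-c)\omega_i=\beta_i^+$ and the telescoping identity \eqref{eq:alpha_to_beta}) are routine once the framework is set up, exactly as in \cite{YZ08}.
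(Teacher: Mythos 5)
Your treatment of the preprojective relation is exactly the paper's: apply Proposition~\ref{prop:fundid} with $u=v=c^ks_1\cdots s_{i-1}$ and rewrite the error term via Lemma~\ref{lemma:coefficient_identity_powers}. Your treatment of the postinjective relation \eqref{eq:postinjective} is also essentially the paper's, just with the order of ``restrict to a Coxeter cell'' and ``apply $\iota$'' swapped; this is harmless.

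The gap is in the boundary relation \eqref{eq:plusminus}, and it is genuine. You claim that the $\iota$-transform of Proposition~\ref{prop:fundid} with $u=v=s_n\cdots s_{i+1}$ produces \eqref{eq:plusminus}. But that is the $k=0$ instance of the family $u=v=c^{-k}s_n\cdots s_{i+1}$ you just used, and so it produces the $k=0$ case of \eqref{eq:postinjective}, i.e.\ the relation with left-hand side $x_{-\omega_i}x_{-c^{-1}\omega_i}$, not the relation $x_{\omega_i}x_{-\omega_i}=z_iz_{\ol\imath}\prod_{j<i}x_{\omega_j}^{-a_{ji}}\prod_{j>i}x_{-\omega_j}^{-a_{ji}}+1$. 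More structurally, \eqref{eq:plusminus} cannot be an instance of Proposition~\ref{prop:fundid} (or its $\iota$-transform) at all: that identity is entirely among minors of the form $\Delta^{u\omega_j}_{v\omega_j}$, which are all semi-invariants with respect to a single pair of Borel subgroups, whereas \eqref{eq:plusminus} mixes the highest-weight minors $\Delta_{\omega_j}$ ($j<i$) with the lowest-weight minors $\Delta_{-\omega_j}$ ($j>i$), which are semi-invariants of opposite Borels. Without \eqref{eq:plusminus} the postinjective recursion has no base, so this is not a cosmetic omission.

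Your suggested ``conceptual'' alternative also has a gap. You want to say that $\iota$ intertwines the BFZ cluster structure on $G^{c,c^{-1}}$ with that on $G^{c^{-1},c}$, matching the initial seed of the latter with the seed at $t_{-n}$ of the former. But $\iota$ sends the BFZ frozen variables $\Delta^{\omega_i}_{c\omega_i},\Delta^{c\omega_i}_{\omega_i}$ on $G^{c,c^{-1}}$ to the lowest-weight minors $\Delta^{-c\omega_i}_{-\omega_i},\Delta^{-\omega_i}_{-c\omega_i}$ on $G^{c^{-1},c}$, which are not the BFZ frozen variables $\Delta^{\omega_i}_{c^{-1}\omega_i},\Delta^{c^{-1}\omega_i}_{\omega_i}$ of that cell. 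Reconciling these two sets of frozen variables is nontrivial and amounts to proving \eqref{eq:plusminus} in another guise. The paper handles this head-on: after reducing \eqref{eq:plusminus} to the recursion $h^{\omega_i}\Delta_{\omega_i}(g^\iota)=t_it_{\ol\imath}h^{\alpha_i}\prod_{j>i}\bigl(h^{\omega_j}\Delta_{\omega_j}(g^\iota)\bigr)^{-a_{ji}}+1$, it straightens the factorization of $g^\iota$ using the Berenstein--Zelevinsky commutation $x_i(t_i)hx_{\ol\imath}(t_{\ol\imath})=x_{\ol\imath}(t'_{\ol\imath})h(1+t_it_{\ol\imath}h^{-\alpha_i})^{\alpha_i^\vee}x_i(t'_i)$ and reads off $\Delta_{\omega_i}(g^\iota)$ directly from the resulting Cartan part. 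You will need some argument of this kind for the base of the postinjective recursion.

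Finally, a small precision point: the identification of a cluster variable by its $\bfg$-vector uses that $\Bdp$ has full rank, which you assert; that is indeed established in the proof of Theorem~\ref{thm:coordring}, so it is available to you, but it is worth flagging explicitly as the reason ``satisfies the right exchange relation'' pins down the cluster variable.
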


\begin{proof}
  In Theorem~\ref{thm:vars_and_rels_in_bipartite_belt} we explicitly computed exchange relations which allow one to compute these cluster variables by iterated mutation from the initial variables, so it suffices to check that the indicated restrictions of minors satisfy the same relations.

  If we apply Proposition~\ref{prop:fundid} with $u = v = c^k s_1 \cdots s_{i-1}$ and substitute using the equalities from Lemma~\ref{lemma:coefficient_identity_powers}, we find that
  \begin{equation}
    \Delta_{c^k\omega_i}\Delta_{c^{k+1}\omega_i}
    =
    \prod_{j<i}\Delta_{c^{k+1}\omega_j}^{-a_{ji}}
    \prod_{j>i}\Delta_{c^k\omega_j}^{-a_{ji}}
    +
    \prod_{j=1}^n(z_j z_{\ol{\jmath}})^{[c^k\beta_i^+:\alpha_j]}
  \end{equation}
  is satisfied on $G^{c,c^{-1}}$.
  Comparing with Equation~\eqref{eq:preprojective} establishes that $x_{c^k\omega_i} = \Delta_{c^k \omega_i}$.

  On the other hand, applying the above argument with $c^{-1}$ in place of $c$ shows that
  \begin{equation*} \Delta_{c^{-k}\omega_i} \Delta_{c^{-k-1}\omega_i} =
    \prod_{j<i}\Delta_{c^{-k}\omega_j}^{-a_{ji}}
    \prod_{j>i}\Delta_{c^{-k-1}\omega_j}^{-a_{ji}}
    +
    \prod_{j=1}^n\left(
      \Delta^{c^{-1} \omega_j}_{\omega_j}
      \Delta^{\omega_j}_{ c^{-1}\omega_j}
      \prod_{\ell>j}\big(
        \Delta^{c^{-1} \omega_\ell}_{\omega_\ell}
        \Delta^{\omega_\ell}_{c^{-1} \omega_\ell}
      \big)^{a_{\ell j}}
    \right)^{[c^{-k}\beta_i^-:\alpha_j]}
  \end{equation*}
  is satisfied on $G^{c^{-1},c}$.
  Given that $g \mapsto g^\iota$ exchanges $G^{c,c^{-1}}$ with $G^{c^{-1},c}$ and that $\Delta_\lambda^\mu(g) = \Delta_{-\mu}^{-\lambda}(g^\iota)$, we obtain another identity on $G^{c,c^{-1}}$.
  Using the substitution of Lemma~\ref{lemma:coefficients_identities}, this can be rewritten as
  \begin{equation}
  \Delta_{-c^{-k}\omega_i}\Delta_{-c^{-k-1}\omega_i} =
    \prod_{j<i}\Delta_{-c^{-k}\omega_j}^{-a_{ji}}
    \prod_{j>i}\Delta_{-c^{-k-1}\omega_j}^{-a_{ji}}
    +
    \prod_{j=1}^n (z_j z_{\ol{\jmath}})^{[c^{-k}\beta_i^-:\alpha_j]}.
  \end{equation}
  In particular, the lowest-weight minors $\Delta_{-c^{-k}\omega_i}$ satisfy Equation~\eqref{eq:postinjective}.

  To establish that the indicated minors satisfy Equation~\eqref{eq:plusminus} and thereby conclude the proof, we must show that
  \begin{equation*}
    \Delta_{\omega_i}\Delta_{-\omega_i}
    =
    \cvar_i \cvar_{\ol{\imath}}
    \prod_{j<i}\Delta_{\omega_j}^{-a_{ji}}
    \prod_{j>i}\Delta_{-\omega_j}^{-a_{ji}}
    +
    1
  \end{equation*}
  is satisfied on $G^{c,c^{-1}}$.
  Evaluating both sides on a generic element factored as in Equation~\eqref{eq:generic_factorization}, we apply Lemma~\ref{lemma:coefficients_values} to obtain
  \begin{align*}
    h^{\omega_i}\Delta_{-\omega_i}(g)
    &=
    t_i
    t_{\ol{\imath}}
    h^{2\omega_i}
    \prod_{j<i}h^{a_{j i}\omega_j}
    \prod_{j>i}\Delta_{-\omega_j}^{-a_{ji}}(g)
    +
    1
    =
    t_i
    t_{\ol{\imath}}
    h^{\alpha_i}
    \prod_{j>i}\Big(h^{\omega_j}\Delta_{-\omega_j}(g)\Big)^{-a_{ji}}
    +
    1.
  \end{align*}
  Applying $\iota$, this is equivalent to
  \begin{equation*}
    h^{\omega_i}\Delta_{\omega_i}(g^\iota) =
    t_i
    t_{\ol{\imath}}
    h^{\alpha_i}
    \prod_{j>i}\Big(h^{\omega_j}\Delta_{\omega_j}(g^\iota)\Big)^{-a_{ji}}
    +
    1.
  \end{equation*}

  To compute $\Delta_{\omega_i}(g^\iota)$ we need to straighten up the expression of $g^\iota$ using the fact that
  \begin{equation*}
    x_i(t_i) h x_{\ol{\imath}}(t_{\ol{\imath}})
    =
    x_{\ol{\imath}}(t'_{\ol{\imath}})
    h
    (1+t_it_{\ol{\imath}}h^{-\alpha_i})^{\alpha_i^\vee}
    x_{i}(t'_i),
  \end{equation*}
  for some $t'_i$, $t'_{\ol{\imath}} \in \kk^\times$ \cite[Proposition 7.2]{BZ01} (this imposes an irrelevant genericity condition on $t'_i$ and $t'_{\ol{\imath}}$).
  Using this and the fact that $x_i(t_i)$ and $x_{\ol{\jmath}}(t_{\ol{\jmath}})$ commute for $i \neq j$, a straightforward computation yields
  \[
    g^\iota
    =
    x_{\ol{n}}(t'_{\ol{n}})
    \cdots
    x_{\ol{1}}(t'_{\ol{1}})
    h^{-1}
    v_1^{\alpha^\vee_1}
    \dots
    v_n^{\alpha^\vee_n}
    x_1(t'_1)
    \dots
    x_n(t'_n).
  \]
  Here the $v_i$ are defined by the recurrence relation
  \[
    v_i
    =
    t_i t_{\ol{\imath}} h^{\alpha_i}
    \prod_{j > i} v_j^{-a_{j,i}}+1,
  \]
  and are non-zero elements of $\kk$ for generic values of the parameters $t_i$ and $t_{\ol{\imath}}$.
  From this one observes that $h^{\omega_i}\Delta_{\omega_i}(g^\iota) = v_i$, and the desired identity follows.
\end{proof}

\section{Regular cluster variables as minors}

In this section we prove that the identification $\cA_{\Bdp} \cong \kk[G^{c,c^{-1}}]$ realizes all regular cluster variables as restrictions of level zero minors in the case that the Cartan companion of $B$ is of type $A_{n-1}^{\!(1)}$.
The proof exploits the fact that, when $G$ is the central extension $\widetilde{LSL}_n$ of the loop group of $SL_n$, the level zero minors can be expressed combinatorially as weighted sums over collections of paths in a directed network on a cylinder.
We show that the collections of paths relevant to a given minor are in bijection with submodules of a corresponding regular quiver representation, and from this that the network computation directly recovers the Caldero-Chapoton formula.
A similar computation shows that the generic basis element associated to the base of any homogeneous tube in type $A_{n-1}^{\!(1)}$ is also a level zero minor.
We expect that all regular cluster variables are minors of level zero representations for any affine type, and we verify this for the finite list of types where it can be checked using only the characters of those representations.

\subsection{Combinatorics of regular representations}
\label{sec:rigidregular}
Let $Q$ be an acyclic orientation of an $n$-cycle, and label the vertices of $Q$ by $\ZZ_n$ so that neighboring vertices have cyclically adjacent labels.
We have the following explicit classification of the indecomposable regular representations $\Qrep$ which are rigid (i.e. $\Ext^1(\Qrep,\Qrep)=0$).
For $k,\ell\in\ZZ_n$ we write $[k,\ell]:=\{k,k+1,\ldots,\ell\}$ and let $\Qrep_{[k,\ell]}$ be the representation with
\[
  \Qrep_i \cong \begin{cases} \kk & i \in [k,\ell] \\ 0 & \text{otherwise,} \end{cases}
\]
and all arrows acting by invertible maps except possibly the arrow between $k-1$ and $k$ and the arrow between $\ell$ and $\ell + 1$.

\begin{proposition}
  If $[k,\ell] \subsetneq \ZZ_n$ is a proper subset containing the same number of sinks and sources of $Q$, then $\Qrep_{[k,\ell]}$ is an indecomposable rigid regular representation of $Q$.
  Every such representation is isomorphic to one of this form.
\end{proposition}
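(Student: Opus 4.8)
The plan is to prove the two assertions in turn, treating the construction of $\Qrep_{[k,\ell]}$ and then the classification. For the first assertion, note that since $[k,\ell]\subsetneq\ZZ_n$ the full subquiver of $Q$ on $[k,\ell]$ is a linear $A$-type quiver, and the two boundary arrows have a zero space at one end, hence act by zero; rescaling bases vertex by vertex normalizes the remaining (invertible) arrows to the identity, so $\Qrep_{[k,\ell]}$ is the interval module of that linear subquiver, in particular indecomposable and well defined up to isomorphism. Any endomorphism acts by the same scalar at every vertex of $[k,\ell]$ (the internal maps being invertible), so $\Qrep_{[k,\ell]}$ is a brick; writing $m=|[k,\ell]|$ and using that $Q$ has exactly $m-1$ arrows inside $[k,\ell]$, the Euler--Ringel form gives $\langle[\Qrep_{[k,\ell]}],[\Qrep_{[k,\ell]}]\rangle=m-(m-1)=1$, so $\Ext^1(\Qrep_{[k,\ell]},\Qrep_{[k,\ell]})=\dim_\kk\Hom(\Qrep_{[k,\ell]},\Qrep_{[k,\ell]})-1=0$ and $\Qrep_{[k,\ell]}$ is rigid. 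Finally $\langle\delta,[\Qrep_{[k,\ell]}]\rangle=\sum_{j\in[k,\ell]}\bigl(1-\#\{\text{arrows into }j\}\bigr)=\#\{\text{sources in }[k,\ell]\}-\#\{\text{sinks in }[k,\ell]\}$, which vanishes exactly under our hypothesis; since an indecomposable representation of the affine quiver $Q$ is regular precisely when this defect is zero, $\Qrep_{[k,\ell]}$ is regular.

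For the converse, let $\Qrep$ be an indecomposable rigid regular representation. Over a hereditary algebra a rigid indecomposable module is a brick, so $\langle[\Qrep],[\Qrep]\rangle=1$. Writing $d_i=\dim_\kk\Qrep_i$ and using that each edge of the $n$-cycle contributes a single product $d_id_{i+1}$, one finds $\langle[\Qrep],[\Qrep]\rangle=\tfrac12\sum_{i\in\ZZ_n}(d_i-d_{i+1})^2$; setting this equal to $1$ forces exactly two summands to equal $1$ and the rest to vanish, and since the indices run over a cycle the two unit jumps have opposite sign. Hence there are an integer $a\ge0$ and a nonempty proper arc $A\subsetneq\ZZ_n$ with $d_i=a+1$ on $A$ and $d_i=a$ off $A$, i.e.\ $[\Qrep]=a\delta+\sum_{i\in A}[S_i]$. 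Regularity gives $0=\langle\delta,[\Qrep]\rangle=\#\{\text{sources in }A\}-\#\{\text{sinks in }A\}$, so by the first assertion there is a rigid regular indecomposable $\Qrep_A$ with $[\Qrep_A]=\sum_{i\in A}[S_i]$. If $a\ge1$, then $\Qrep_A$ lies in a non-homogeneous tube while the generic representation of dimension vector $\delta$ is a quasi-simple of a homogeneous tube; since there are no nonzero morphisms or extensions between distinct tubes and $\langle\delta,[\Qrep_A]\rangle=\langle[\Qrep_A],\delta\rangle=0$, the decomposition $[\Qrep_A]\oplus\delta^{\oplus a}$ is the canonical decomposition of $[\Qrep]$; but a rigid module has a dense orbit, so $\Qrep$ would be the general representation of dimension $[\Qrep]$ and hence decomposable, a contradiction. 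Therefore $a=0$, $[\Qrep]=\sum_{i\in[k,\ell]}[S_i]$ for the proper arc $[k,\ell]:=A$ with equally many sinks and sources, and by Kac's theorem there is a unique indecomposable of a given positive real root, so $\Qrep\cong\Qrep_{[k,\ell]}$.

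The step I expect to be the main obstacle is the elimination of $a\ge1$ in the converse: this is the only place the fine structure of the regular component of $\rep_\kk(Q)$ in type $A_{n-1}^{\!(1)}$ is used, namely that every rigid regular indecomposable lies in one of the (at most two) non-homogeneous tubes, that the homogeneous tubes form a one-parameter family, and that morphisms and extensions vanish between distinct tubes. An equivalent route is to bring in the tube structure explicitly: a module of quasi-length $m$ in a tube of rank $r$ has endomorphism algebra $\kk[x]/(x^{\lceil m/r\rceil})$, so with $\langle[\Qrep],[\Qrep]\rangle=1$ it is rigid exactly when $m<r$, in which case its dimension vector is a sum of $m$ of the $r$ quasi-simple dimension vectors; identifying the latter with the indicator vectors of a cyclic tiling of $\ZZ_n$ by arcs then yields $[\Qrep]=\sum_{i\in[k,\ell]}[S_i]$ directly. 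Apart from these structural inputs and the routine Euler-form and defect computations, the argument is formal.
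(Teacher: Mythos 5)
Your proof is correct, but it takes a genuinely different route from the paper's. For the converse direction, the paper recalls the decomposition of the regular category $\cR \cong \cR_{nil}^p \oplus \cR_{nil}^q \oplus \cR_{inv}$ (nilpotent representations of the oriented $p$-cycle and $q$-cycle together with invertible Jordan quiver representations), then shows that no sincere object in any piece is rigid by exhibiting explicit self-extensions — a block upper-triangular matrix for the invertible Jordan case, and a lift-and-shift construction via the $A_\infty$ covering of the oriented cycle for the nilpotent cases; it then concludes that rigid regular indecomposables are non-sincere and so supported on a proper $A_r$-subquiver. You instead combine the Happel--Ringel brick lemma with the direct computation $\langle d,d\rangle = \tfrac12\sum_{i\in\ZZ_n}(d_i - d_{i+1})^2$ to pin down the dimension vector to the form $a\delta + \sum_{i\in A}[S_i]$ for a proper arc $A$, then eliminate $a\geq 1$ via the canonical decomposition (the generic representation of that dimension vector is $\Qrep_A$ plus $a$ quasi-simples from distinct homogeneous tubes, hence decomposable, while a rigid module occupies the dense orbit). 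Your route is more arithmetic and less dependent on the explicit category decomposition, but it leans on heavier standard machinery (canonical decomposition, tube orthogonality, Kac's uniqueness theorem for real roots) where the paper's is essentially self-contained given the structure of $\cR$. Both arguments are sound; yours also spells out the forward direction, which the paper leaves implicit. Your alternative route via quasi-length and tube rank is likewise valid and arguably closest in spirit to the paper's reliance on the tube structure of $\cR_{nil}^p$ and $\cR_{nil}^q$.
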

\begin{proof}\mbox{}
  We first recall the structure of the full subcategory $\cR\subset\rep_\kk(Q)$ of regular representations \cite[Chapter 12, Section 2]{ASS06}.
  Say an arrow $a$ of $Q$ is \newword{clockwise} if $s(a) = t(a) - 1$ and \newword{counterclockwise} if $s(a) = t(a) + 1$.
  Let $p$ and $q$ be the numbers of clockwise and counterclockwise arrows, respectively.
  Then we have a decomposition $$\cR \cong \cR_{nil}^p \oplus \cR_{nil}^q \oplus \cR_{inv},$$ where $\cR_{nil}^p$ (resp. $\cR_{nil}^q$) is equivalent to the category of nilpotent representations of an oriented $p$-cycle (resp. $q$-cycle), and $\cR_{inv}$ is equivalent to the category of invertible representations of the Jordan quiver (this is the quiver with a single vertex and a single arrow from that vertex to itself, and an invertible representation is one for which this arrow acts by an invertible linear map).

  Any invertible representation of the Jordan quiver has nontrivial self-extensions: if $X$ is an invertible matrix defining such a representation $\Qrep$, then the block upper triangular matrix
  \[
    \begin{bmatrix} X & Id \\ 0 & X \end{bmatrix}
  \]
  defines a self-extension of $\Qrep$.
  In particular, $\cR_{inv}$ does not contain any nontrivial rigid representations.

  Now consider the category $\cC^p$ of nilpotent representations of an oriented $p$-cycle.
  Write $\tilde\cC$ for the category of finite-dimensional representations of the equi-oriented $A_{\infty}$ quiver (i.e. the quiver with vertex set $\ZZ$, and having one arrow from $i$ to $i+1$ for all $i \in \ZZ$).
  For any choice of base point, there is an essentially surjective exact functor $\pi:\tilde\cC\to\cC^p$ given by ``wrapping around'' the $p$-cycle (nilpotency guarantees the essential surjectivity).
  Now consider an indecomposable object $\Qrep$ of $\cC^p$ which is sincere (i.e. $\Qrep_i \neq 0$ for every vertex $i$).
  Given any lift $\tilde \Qrep\in\tilde\cC$ of $\Qrep$, let $\tilde \Qrep'$ be the lift of $\Qrep$ obtained by shifting $\tilde \Qrep$ in the direction of the arrows by $p$ vertices.
  Since $\Qrep$ was sincere, $\tilde\cC$ contains a nontrivial extension of $\tilde \Qrep$ by $\tilde \Qrep'$.
  Since $\pi$ is exact, the image of this extension is a nontrivial self-extension of $\Qrep$ in $\cC^p$.
  As all counterclockwise arrows act by invertible linear maps on any $Q$-representation in $\cR_{nil}^p$, the equivalence $\cC^p \cong \cR_{nil}^p$ takes sincere representations to sincere representations.
  In particular, we conclude that no sincere representation in $\cR_{nil}^p$ is rigid, and likewise for $\cR_{nil}^q$ by the same argument.

  From the discussion so far we see that any indecomposable rigid regular representation of $Q$ is non-sincere and in either $\cR_{nil}^p$ or $\cR_{nil}^q$.
  Any proper subquiver $Q'$ of $Q$ defines an exact inclusion $\rep_\kk(Q') \into \rep_\kk(Q)$.
  Any non-sincere representation of $Q$ is the image of a representation of a proper subquiver, which will necessarily be of type $A_r$ for some $r < n$.
  Thus any indecomposable rigid regular representation of $Q$ is the image of an indecomposable representation of such a subquiver (these will all be rigid) -- in particular, it is isomorphic to $\Qrep_{[k,\ell]}$ for a proper subset $[k,\ell]\subsetneq\ZZ_n$.

  It remains to derive the condition on sinks and sources in $[k,\ell]$.
  We identify the Grothendieck group $\KQ$ with the root lattice $\cQ$ of type $A_{n-1}^{\!(1)}$, and write $\delta\in\cQ$ for the primitive positive imaginary root.
  Recall that $\Qrep \in \rep_\kk(Q)$ is regular if and only if the Euler-Ringel pairing $\langle\delta,\Qrep\rangle$ vanishes \cite[Section 7]{CB92}.
  Writing $I \subset \ZZ_n$ (resp. $O \subset \ZZ_n$) for the subset of sinks (resp. sources) in $Q$, we have
  \[
    \langle\delta,\Qrep\rangle
    =
    \sum_{j\in Q_0}\dim \Qrep_j-\sum_{a\in Q_1}\dim \Qrep_{t(a)}
    =
    \sum_{o\in O}\dim \Qrep_o-\sum_{i\in I}\dim \Qrep_i.
  \]
  Thus $\Qrep$ is regular if and only if $\sum\limits_{o\in O}\dim \Qrep_o=\sum\limits_{i\in I}\dim \Qrep_i$.
  In particular, this holds for $\Qrep = \Qrep_{[k,\ell]}$ exactly when the number of sinks in $[k,\ell]$ is equal to the number of sources in $[k,\ell]$.
\end{proof}

With this classification in hand, we now describe the cluster characters of the rigid regular representations of $Q$ combinatorially in terms of target-closed subsets.
Given $[k,\ell]\subsetneq\ZZ_n$, we say a subset $E$ of $[k,\ell]$ is \newword{target-closed} if whenever $a$ is an arrow of $Q$ with $s(a) \in E$ and $t(a) \in [k,\ell]$, then in fact $t(a) \in E$.
We let $\cE_{[k,\ell]}$ denote the collection of target-closed subsets of $[k,\ell]$, and otherwise follow the notation of Section~\ref{sec:quiverbackground}.

\begin{proposition}
  \label{prop:regular coindices}
  Let $[k,\ell] \subsetneq \ZZ_n$ be a proper subset containing the same number of sinks and sources of $Q$, so $\Qrep_{[k,\ell]}$ is rigid and regular.
  If $i_1 < \cdots < i_m$ (resp. $o_1 < \cdots < o_m$) are the sinks (resp. sources) of $Q$ contained in $[k,\ell]$ (ordered by the restriction of the cyclic order on $\ZZ_n$ to a linear order on $[k,\ell]$), then
  \[
    \grep(\Qrep_{[k,\ell]})
    =
    \begin{cases}
      \omega_{k-1}-\omega_\ell+\sum_{j=1}^m (\omega_{o_j}-\omega_{i_j}) & \text{$i_1<o_1$ in $[k,\ell]$}\\
      \omega_{\ell+1}-\omega_k+\sum_{j=1}^m (\omega_{o_j}-\omega_{i_j}) & \text{$o_1 < i_1$ in $[k,\ell]$,}
    \end{cases}
  \]
  and the (framed) cluster character of $\Qrep_{[k,\ell]}$ is
  \begin{equation}
    \label{eq:regular cluster characters}
    x_{\Qrep_{[k,\ell]}}
    =
    x^{\grep(\Qrep_{[k,\ell]})} \sum_{E \in \cE_{[k,\ell]}} \prod_{j \in E} \hat{y}_j.
  \end{equation}
\end{proposition}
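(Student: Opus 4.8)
Proof proposal for Proposition~\ref{prop:regular coindices}.

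The plan is to establish the two assertions separately; the formula for $x_{\Qrep_{[k,\ell]}}$ will follow quickly once $\grep(\Qrep_{[k,\ell]})$ is known and the submodule lattice of $\Qrep_{[k,\ell]}$ is understood, so the real content is the $\bfg$-vector. For the latter I would start from the standard fact that in a hereditary category the coindex depends only on the class in the Grothendieck group: applying the additive Euler--Ringel form to a minimal injective copresentation $0 \to \Qrep \to \bigoplus_j I_j^{a_j} \to \bigoplus_j I_j^{b_j} \to 0$ and using $\langle S_i, I_j\rangle = \delta_{ij}$ (as $\operatorname{soc} I_j = S_j$ and $\Ext^1(S_i,I_j)=0$) gives $\grep(\Qrep)_i = b_i - a_i = -\langle S_i,\Qrep\rangle$. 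Since the dimension vector of $\Qrep_{[k,\ell]}$ is $\sum_{j\in[k,\ell]}[S_j]$, bilinearity together with $\langle S_i,S_j\rangle = \delta_{ij} - \#\{\text{arrows }i\to j\}$ yields the closed form
\begin{equation*}
  \grep(\Qrep_{[k,\ell]})_i \;=\; \#\{\text{arrows } i\to j \text{ in } Q_1 : j\in[k,\ell]\}\;-\;\mathbbm{1}\big[\,i\in[k,\ell]\,\big].
\end{equation*}

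It then remains to match this with the stated expression. Reading the right-hand side vertex by vertex, it vanishes outside $[k,\ell]\cup\{k-1,\ell+1\}$; at a vertex interior to $[k,\ell]$ it equals $+1$ at a source of $Q$, $-1$ at a sink, and $0$ otherwise; and at each of $k-1,k,\ell,\ell+1$ it is determined by the orientations of the two boundary edges $\{k-1,k\}$ and $\{\ell,\ell+1\}$. The hypothesis that $[k,\ell]$ contains equally many sinks and sources of $Q$ enters through the elementary identity
\begin{equation*}
  \#\{\text{sinks of }Q\text{ in }[k,\ell]\} - \#\{\text{sources of }Q\text{ in }[k,\ell]\} \;=\; \mathbbm{1}[\,k-1\to k\,] - \mathbbm{1}[\,\ell\to\ell+1\,],
\end{equation*}
obtained by tracking how the edge orientations change along $[k,\ell]$; under the regularity hypothesis this forces $k-1\to k \Leftrightarrow \ell\to\ell+1$, and these two mutually exclusive configurations are exactly the alternatives ``$i_1<o_1$'' and ``$o_1<i_1$'' in the statement (equivalently, whether the first sink or the first source of $Q$ is met when traversing $[k,\ell]$ from $k$; when $[k,\ell]$ contains no sinks or sources at all, the relevant case is simply the one selected by the boundary orientation). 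A short case check then shows the four boundary contributions assemble to $\omega_{k-1}-\omega_\ell$ in the first case and $\omega_{\ell+1}-\omega_k$ in the second, with the $\pm\omega_k,\pm\omega_\ell$ terms cancelling against the interior sum $\sum_j(\omega_{o_j}-\omega_{i_j})$ precisely when $k$ or $\ell$ is itself a sink or source of $Q$; this gives the stated formula.

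For the cluster character, the point is that $\Qrep_{[k,\ell]}$ is a thin module (each $(\Qrep_{[k,\ell]})_i$ is $0$ or one-dimensional) in which every arrow with both endpoints in $[k,\ell]$ acts invertibly. Hence any submodule $N$ has $N_i\in\{0,(\Qrep_{[k,\ell]})_i\}$ for all $i$, and invertibility of an arrow $a$ with $s(a)\in\operatorname{supp}N$ and $t(a)\in[k,\ell]$ forces $t(a)\in\operatorname{supp}N$, while the boundary arrows (whose targets lie outside $[k,\ell]$) impose nothing; thus $\operatorname{supp}N\in\cE_{[k,\ell]}$. Conversely each $E\in\cE_{[k,\ell]}$ manifestly defines a submodule, so $N\mapsto\operatorname{supp}N$ is a bijection from submodules of $\Qrep_{[k,\ell]}$ onto $\cE_{[k,\ell]}$. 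Therefore every $\Gr_e\Qrep_{[k,\ell]}$ is empty or a single point, so $\sum_{e\in\NN^n}\chi(\Gr_e\Qrep_{[k,\ell]})\hat{y}^e = \sum_{E\in\cE_{[k,\ell]}}\prod_{j\in E}\hat{y}_j$, and combining this with the $\bfg$-vector above and the defining formula \eqref{eq:cc formula} yields \eqref{eq:regular cluster characters}.

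The main obstacle is the matching step for the $\bfg$-vector: while $-\langle S_i,\Qrep_{[k,\ell]}\rangle$ is computed at once, bookkeeping the four boundary-adjacent vertices $k-1,k,\ell,\ell+1$, and checking that the regularity hypothesis genuinely picks out which of the two cases occurs and makes the terms telescope, takes some care. Everything else — the submodule analysis and the passage to the cluster character — is essentially formal.
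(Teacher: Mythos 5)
Your proof is correct, but for the $\bfg$-vector it takes a genuinely different route from the paper's. The paper simply exhibits the minimal injective copresentation
$0 \to \Qrep_{[k,\ell]} \to I_{i_1}\oplus\cdots\oplus I_{i_m}\oplus I_\ell \to I_{k-1}\oplus I_{o_1}\oplus\cdots\oplus I_{o_m} \to 0$ (in the case $i_1<o_1$)
and reads off the $\bfg$-vector directly, leaving the verification that this copresentation is correct and minimal to the reader. You instead bypass the copresentation entirely by using the hereditary-category identity $\grep(M)_i = -\langle S_i,M\rangle$, which reduces everything to the dimension vector of $\Qrep_{[k,\ell]}$; the trade-off is that you then need the telescoping identity $\#\text{sinks}-\#\text{sources}=\mathbbm{1}[k-1\to k]-\mathbbm{1}[\ell\to\ell+1]$ and a boundary case check to re-assemble $-\langle S_i,\Qrep_{[k,\ell]}\rangle$ into the stated form. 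Both routes require comparable casework; yours has the advantage of not needing to guess or verify minimality, while the paper's gives the structural datum of the actual copresentation. For the cluster-character half, you and the paper are in substance the same: the paper cites the coefficient-quiver framework of Cerulli Irelli and Ringel, whereas you spell out the (short, elementary) argument that a thin module with invertible internal arrows has its submodules in bijection with target-closed subsets, each Grassmannian being a point or empty. No gap in either half.
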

\begin{proof}
  If $i_1 < o_1$ in $[k,\ell]$, then it is straightforward to see that the minimal injective resolution of $\Qrep_{[k,\ell]}$ is of the form
  \[
    0
    \to
    \Qrep_{[k,\ell]}
    \to
    I_{i_1} \oplus \cdots \oplus I_{i_m}\oplus I_\ell
    \to
    I_{k-1} \oplus I_{o_1} \oplus \cdots \oplus I_{o_m},
  \]
  from which the given formula for $\grep(\Qrep_{[k,\ell]})$ follows.
  The $o_1 < i_1$ case is similar.
  The description of submodules of $\Qrep_{[k,\ell]}$ in terms of target-closed subsets of $[k,\ell]$ is an instance of a more general description in terms of the coefficient quiver of a representation \cite{CI11,Rin98}; in our case the coefficient quiver of $\Qrep_{[k,\ell]}$ is just the full subquiver of $Q$ with vertex set $[k,\ell]$.
\end{proof}

In Section~\ref{sec:networks} it will be useful to rewrite regular $\bfg$-vectors using the following lemma, which follows from Proposition~\ref{prop:regular coindices} by telescoping the sum below.

\begin{lemma}
  \label{lem:regulargvectors}
  If $\Qrep_{[k,\ell]}$ is rigid and regular, we can write its $\bfg$-vector as
  \[
    \grep(\Qrep_{[k,\ell]})=\sum_{j\in S_{[k,\ell]}} \omega_{j+1} - \omega_j,
  \]
  where
  \[
    S_{[k,\ell]}
    =
    \begin{cases}
      [\ell,k-2] \cup [i_1, o_1 -1] \cup \cdots \cup [i_m, o_m - 1] & \text{$i_1<o_1$ in $[k,\ell]$}\\
      [k,o_1-1] \cup [i_1,o_2-1] \cup \cdots \cup [i_m,\ell] & \text{$o_1 < i_1$ in $[k,\ell]$.}
    \end{cases}
  \]
\end{lemma}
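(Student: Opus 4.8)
The plan is to deduce the lemma directly from the explicit formula for $\grep(\Qrep_{[k,\ell]})$ in Proposition~\ref{prop:regular coindices} by recognizing the claimed sum over $S_{[k,\ell]}$ as a concatenation of telescoping blocks. The only tool needed is the elementary identity
\[
  \sum_{j=a}^{b}(\omega_{j+1}-\omega_j)=\omega_{b+1}-\omega_a,
\]
valid for any interval $[a,b]\subseteq\ZZ_n$ (with an empty interval contributing $0$), which follows by cancelling consecutive terms. Applying it to each constituent interval of $S_{[k,\ell]}$ collapses $\sum_{j\in S_{[k,\ell]}}(\omega_{j+1}-\omega_j)$ into a sum of differences of fundamental weights indexed by the interval endpoints, which I will then match term-by-term with the formula from Proposition~\ref{prop:regular coindices}.

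First I would treat the case $i_1<o_1$ in $[k,\ell]$, where $S_{[k,\ell]}=[\ell,k-2]\cup[i_1,o_1-1]\cup\cdots\cup[i_m,o_m-1]$. Telescoping the blocks gives $\omega_{k-1}-\omega_\ell$ from $[\ell,k-2]$ and $\omega_{o_j}-\omega_{i_j}$ from each $[i_j,o_j-1]$, so the total is $\omega_{k-1}-\omega_\ell+\sum_{j=1}^m(\omega_{o_j}-\omega_{i_j})$, exactly the first case of Proposition~\ref{prop:regular coindices}. Then I would handle the case $o_1<i_1$ similarly: telescoping $[k,o_1-1]$, $[i_1,o_2-1],\dots,[i_{m-1},o_m-1]$, and $[i_m,\ell]$ produces $\omega_{o_1}-\omega_k+\sum_{j=2}^m(\omega_{o_j}-\omega_{i_{j-1}})+\omega_{\ell+1}-\omega_{i_m}$; reindexing the middle sum and combining the boundary terms yields $\omega_{\ell+1}-\omega_k+\sum_{j=1}^m(\omega_{o_j}-\omega_{i_j})$, matching the second case.

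The only point that warrants a sentence of justification is that the intervals listed in the definition of $S_{[k,\ell]}$ are genuinely disjoint and that their endpoints interlock so the telescoped boundary terms recombine as above. This follows from the fact that sinks and sources alternate around the $n$-cycle $Q$: reading $[k,\ell]$ in its induced linear order, the sinks $i_1<\cdots<i_m$ and sources $o_1<\cdots<o_m$ strictly interleave, so each block $[i_j,o_j-1]$ (resp. $[i_j,o_{j+1}-1]$) is a well-defined interval disjoint from the others and from the wrap-around block. I do not expect any real obstacle here; the entire content of the lemma is this telescoping bookkeeping, and the proof reduces to the two short case computations indicated above.
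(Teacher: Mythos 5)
Your proposal is correct and matches the paper's own (one-line) justification: the lemma is stated in the paper as following "from Proposition~\ref{prop:regular coindices} by telescoping the sum," which is precisely the computation you carry out in both cases. The telescoping identity, the case split on $i_1<o_1$ versus $o_1<i_1$, and the reindexing to recombine boundary terms are all as intended, and your remark about disjointness of the blocks (which holds because sinks and sources strictly interleave in $[k,\ell]$, forcing $i_1<o_1<i_2<\cdots$ or $o_1<i_1<o_2<\cdots$) is the only bookkeeping point worth spelling out.
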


\subsection{Level zero minors via networks}
\label{sec:networks}
Recall that the generic element $g$ of any double Bruhat cell $G^{u,v}$ can be factorized as in Equation~\eqref{eq:uvfactorization}.
When $G = \widetilde{LSL}_n$ is the central extension of $LSL_n$, these factorizations (and more general ones associated to shuffles of reduced words) admit a combinatorial description in terms of directed networks on a cylinder \cite{GSV12,FM14}.
This generalizes a similar relation between $SL_n$ and directed networks on a disk \cite{FZ99}.
By a directed network on a surface we will mean an embedded directed graph whose edges are assigned weights in some fixed ring.
As in Section~\ref{sec:rigidregular} we fix a cyclic $\ZZ_n$-labeling of the vertices of the $A_{n-1}^{\!(1)}$ Dynkin diagram, chosen so that $0 \equiv n$ is the label of the affine root $\delta - \theta$.

\begin{figure}
	\centering
  \label{fig:network}
  \begin{tikzpicture}
    \node[below] at (0,-2) {$x_{\ol{\imath}}(t)$};
    \draw (0.75,0) ellipse (0.5 and 2);
    \draw (-0.75,2) arc (90:270:0.5 and 2);
    \draw[dashed] (-0.75,2) arc (90:-90:0.5 and 2);
    \draw[-] (0.75,2) -- (-0.75,2);
    \draw[-] (0.75,-2) -- (-0.75,-2);
    \draw[-] (1.25,0) -- (0.25,0);
    \draw[dashed] (-0.25,0) -- (0.21,0);
    \draw[singlearrow] (1.25,0) -- (-0.25,0);
    \node[right] at (1.25,0) {$\scriptstyle 1$};
    \draw[-] (1.2,-0.85) -- (0.3,-0.85);
    \draw[dashed] (-0.3,-0.85) -- (0.3,-0.85);
    \draw[singlearrow] (1.2,-0.85) -- (-0.3,-0.85);
    \node[right] at (1.2,-0.85) {$\scriptstyle n$};
    \draw[-] (1.015,-1.7) -- (0.49,-1.7);
    \draw[dashed] (-0.49,-1.7) -- (0.45,-1.7);
    \draw[singlearrow] (1.015,-1.7) -- (-0.49,-1.7);
    \node[right] at (1.015,-1.7) {$\scriptstyle n-1$};
    \draw[-] (-1.225,-0.65) -- (0.275,-0.65);
    \draw[doublearrow,draw] (0.275,-0.65) -- (-1.225,-0.65);
    \node[right] at (0.275,-0.65) {$\scriptstyle i$};
    \draw[-] (-1.245,0.2) -- (0.255,0.2);
    \draw[doublearrow,draw] (0.255,0.2) -- (-1.245,0.2);
    \node[right] at (0.255,0.2) {$\scriptstyle i-1$};
    \draw[singlearrow,draw] (-0.495,0.2) arc (174.2:199:0.5 and 2);
    \node at (-0.8,-0.225) {$\scriptstyle t$};
    \draw[-] (1.015,1.7) -- (0.49,1.7);
    \draw[dashed] (-0.49,1.7) -- (0.45,1.7);
    \draw[singlearrow] (1.015,1.7) -- (-0.49,1.7);
    \node[right] at (1.015,1.7) {$\scriptstyle 3$};
    \draw[-] (1.2,0.85) -- (0.3,0.85);
    \draw[dashed] (-0.3,0.85) -- (0.3,0.85);
    \draw[singlearrow] (1.2,0.85) -- (-0.3,0.85);
    \node[right] at (1.2,0.85) {$\scriptstyle 2$};
  \end{tikzpicture}
  \quad\quad
  \begin{tikzpicture}
    \node[below,color=white] at (0,-2) {$()$};
    \node[below] at (0,-2) {$h$};
    \draw (0.75,0) ellipse (0.5 and 2);
    \draw (-0.75,2) arc (90:270:0.5 and 2);
    \draw[dashed] (-0.75,2) arc (90:-90:0.5 and 2);
    \draw[-] (0.75,2) -- (-0.75,2);
    \draw[-] (0.75,-2) -- (-0.75,-2);
    \draw[-] (1.25,0) -- (0.25,0);
    \draw[dashed] (-0.25,0) -- (0.21,0);
    \draw[singlearrow] (1.25,0) -- (-0.25,0);
    \node[right] at (1.25,0) {$\scriptstyle 1$};
    \draw[-] (1.2,-0.85) -- (0.3,-0.85);
    \draw[dashed] (-0.3,-0.85) -- (0.3,-0.85);
    \draw[singlearrow] (1.2,-0.85) -- (-0.3,-0.85);
    \node[right] at (1.2,-0.85) {$\scriptstyle n$};
    \draw[-] (1.015,-1.7) -- (0.49,-1.7);
    \draw[dashed] (-0.49,-1.7) -- (0.45,-1.7);
    \draw[singlearrow] (1.015,-1.7) -- (-0.49,-1.7);
    \node[right] at (1.015,-1.7) {$\scriptstyle n-1$};
    \draw[-] (-1.225,-0.65) -- (0.275,-0.65);
    \draw[doublearrow] (0.275,-0.65) -- (-1.225,-0.65);
    \node[right] at (0.275,-0.65) {$\scriptstyle i$};
    \draw[thick,color=white] (-0.255,-0.28) arc (-8:-18.5:0.5 and 2);
    \node at (-0.5,-0.435) {$\scriptstyle h_{i+1}h_i^{-1}$};
    \draw[-] (-1.245,0.2) -- (0.255,0.2);
    \draw[doublearrow] (0.255,0.2) -- (-1.245,0.2);
    \node[right] at (0.255,0.2) {$\scriptstyle i-1$};
    \draw[thick,color=white] (-0.255,0.22) arc (6.25:18:0.5 and 2);
    \node at (-0.5,0.405) {$\scriptstyle h_ih_{i-1}^{-1}$};
    \draw[-] (1.015,1.7) -- (0.49,1.7);
    \draw[dashed] (-0.49,1.7) -- (0.45,1.7);
    \draw[singlearrow] (1.015,1.7) -- (-0.49,1.7);
    \node[right] at (1.015,1.7) {$\scriptstyle 3$};
    \draw[-] (1.2,0.85) -- (0.3,0.85);
    \draw[dashed] (-0.3,0.85) -- (0.3,0.85);
    \draw[singlearrow] (1.2,0.85) -- (-0.3,0.85);
    \node[right] at (1.2,0.85) {$\scriptstyle 2$};
  \end{tikzpicture}
  \quad\quad
  \begin{tikzpicture}
    \node[below] at (0,-2) {$x_i(t)$};
    \draw (0.75,0) ellipse (0.5 and 2);
    \draw (-0.75,2) arc (90:270:0.5 and 2);
    \draw[dashed] (-0.75,2) arc (90:-90:0.5 and 2);
    \draw[-] (0.75,2) -- (-0.75,2);
    \draw[-] (0.75,-2) -- (-0.75,-2);
    \draw[-] (1.25,0) -- (0.25,0);
    \draw[dashed] (-0.25,0) -- (0.21,0);
    \draw[singlearrow] (1.25,0) -- (-0.25,0);
    \node[right] at (1.25,0) {$\scriptstyle 1$};
    \draw[-] (1.2,-0.85) -- (0.3,-0.85);
    \draw[dashed] (-0.3,-0.85) -- (0.3,-0.85);
    \draw[singlearrow] (1.2,-0.85) -- (-0.3,-0.85);
    \node[right] at (1.2,-0.85) {$\scriptstyle n$};
    \draw[-] (1.015,-1.7) -- (0.49,-1.7);
    \draw[dashed] (-0.49,-1.7) -- (0.45,-1.7);
    \draw[singlearrow] (1.015,-1.7) -- (-0.49,-1.7);
    \node[right] at (1.015,-1.7) {$\scriptstyle n-1$};
    \draw[-] (-1.225,-0.65) -- (0.275,-0.65);
    \draw[doublearrow,draw] (0.275,-0.65) -- (-1.225,-0.65);
    \node[right] at (0.275,-0.65) {$\scriptstyle i$};
    \draw[singlearrow,draw] (-0.475,-0.65) arc (199:174.2:0.5 and 2);
    \node at (-0.7,-0.225) {$\scriptstyle t$};
    \draw[-] (-1.245,0.2) -- (0.255,0.2);
    \draw[doublearrow,draw] (0.255,0.2) -- (-1.245,0.2);
    \node[right] at (0.255,0.2) {$\scriptstyle i-1$};
    \draw[-] (1.015,1.7) -- (0.49,1.7);
    \draw[dashed] (-0.49,1.7) -- (0.45,1.7);
    \draw[singlearrow] (1.015,1.7) -- (-0.49,1.7);
    \node[right] at (1.015,1.7) {$\scriptstyle 3$};
    \draw[-] (1.2,0.85) -- (0.3,0.85);
    \draw[dashed] (-0.3,0.85) -- (0.3,0.85);
    \draw[singlearrow] (1.2,0.85) -- (-0.3,0.85);
    \node[right] at (1.2,0.85) {$\scriptstyle 2$};
  \end{tikzpicture}
  \caption{The constituent networks used to define $\cN^{u,v}$.}
  \label{fig:networks on cylinders}
\end{figure}
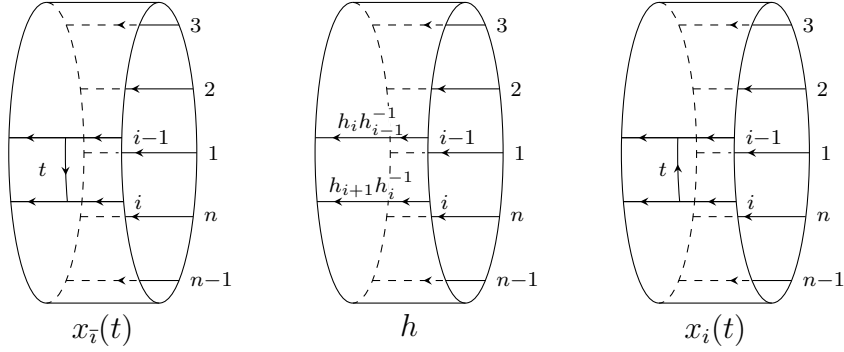

\begin{definition}
  \label{def:network}
  Let $u$, $v$ be elements of the Weyl group of $\widetilde{LSL}_n$ together with a choice of reduced words $u=s_{i_1}\cdots s_{i_{\ell(u)}}$, $v=s_{j_1}\cdots s_{j_{\ell(v)}}$.
  We associate to this data a weighted directed network $\cN^{u,v}$ on a cylinder, by abuse omitting the choice of reduced words from the notation.
  The edge weights of the network will take values in the Laurent polynomial ring
  \[
    \kk[t_{1}^{\pm1},\dotsc,t_{\ell(v)}^{\pm 1},h_1^{\pm 1},\dotsc,h_{n}^{\pm 1},t_{\ol{1}}^{\pm 1},\dotsc,t_{\ol{\ell(u)}}^{\pm 1}].
  \]
  We declare one boundary component of the cylinder to be incoming and the other outgoing, and begin by associating a smaller cylindrical network to each factor in Equation~\eqref{eq:uvfactorization} (see Figure~\ref{fig:networks on cylinders}).

  For $1 \leq i \leq n$, the networks associated to $x_i(t)$ and $x_{\ol{\imath}}(t)$ contain $n$ horizontal \newword{levels} running from the incoming boundary to the outgoing boundary.
  These are labelled by $\ZZ_n$ compatibly with their cyclic ordering and all have weight $1$.
  There is also a vertical \newword{bridge} having weight $t$ running between the $(i-1)$-st level and the $i$-th level; for $x_i(t)$ this bridge is oriented from the $i$-th level to the $(i-1)$-st level, while for $x_{\ol{\imath}}(t)$ this bridge is oriented from the $(i-1)$-st level to the $i$-th.

  The network associated to the Cartan factor $h$ again has $n$ horizontal levels running from the incoming boundary to the outgoing boundary and labelled by $\ZZ_n$.
  Here the $i$-th level is of weight $h_{i+1}h_{i}^{-1}$.

  The network $\cN^{u,v}$ is now defined by gluing these constituent networks together in the order they appear in Equation~\eqref{eq:uvfactorization} (see Figure~\ref{fig:totalnetwork}).
  Given two adjacent factors, the incoming boundary of the left factor is glued to the outgoing boundary of the right factor so that the horizontal levels of each are aligned compatibly with their labeling by $\ZZ_n$.
\end{definition}

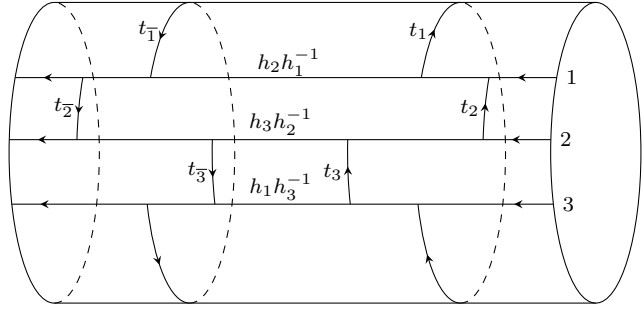
\begin{figure}
	\centering
  \begin{tikzpicture}
    \pgfmathsetmacro{\ehgt}{2} 
    \pgfmathsetmacro{\edpt}{0.6} 
    \pgfmathsetmacro{\ewdt}{0.9} 
    \pgfmathsetmacro{\fang}{150} 
    \pgfmathsetmacro{\ang}{25} 
    \pgfmathsetmacro{\ask}{-7} 
    \pgfmathsetmacro{\hsk}{-0.2} 

    \draw ([shift={(90:{\edpt} and \ehgt)}]0,0) arc (90:270:{\edpt} and \ehgt);
    \draw [dashed] ([shift={(-90:{\edpt} and \ehgt)}]0,0) arc (-90:90:{\edpt} and \ehgt);

    \draw [singlearrow,draw] ([shift={(\fang:{\edpt} and \ehgt)}]\ewdt,0) arc (\fang:\fang+\ang:{\edpt} and \ehgt);
    \node at ([shift={(\fang+0.5*\ang:{\edpt} and \ehgt)}]\ewdt+\hsk,0) {$\scriptstyle t_{\ol{2}}$};

    \draw [singlearrow,draw] ([shift={(90:{\edpt} and \ehgt)}]2*\ewdt,0) arc (90:\fang:{\edpt} and \ehgt);
    \draw [dashed] ([shift={(-90:{\edpt} and \ehgt)}]2*\ewdt,0) arc (-90:90:{\edpt} and \ehgt);
    \draw [singlearrow,draw] ([shift={(\fang+2*\ang:{\edpt} and \ehgt)}]2*\ewdt,0) arc (\fang+2*\ang:270:{\edpt} and \ehgt);
    \node at ([shift={({0.6*(\fang-90)+90}:{\edpt} and \ehgt)}]2*\ewdt+\hsk,0) {$\scriptstyle t_{\ol{1}}$};

    \draw [singlearrow,draw] ([shift={(\fang+\ang:{\edpt} and \ehgt)}]3*\ewdt,0) arc (\fang+\ang:\fang+2*\ang:{\edpt} and \ehgt);
    \node at ([shift={(\fang+1.5*\ang:{\edpt} and \ehgt)}]3*\ewdt+\hsk,0) {$\scriptstyle t_{\ol{3}}$};

    \draw ([shift={(\fang:{\edpt} and \ehgt)}]0,0) -- ([shift={(150:{\edpt} and \ehgt)}]8*\ewdt,0);
    \draw [singlearrowreversed] ([shift={(\fang:{\edpt} and \ehgt)}]0,0) -- ([shift={(150:{\edpt} and \ehgt)}]\ewdt,0);
    \draw [singlearrowreversed] ([shift={(\fang:{\edpt} and \ehgt)}]7*\ewdt,0) -- ([shift={(150:{\edpt} and \ehgt)}]8*\ewdt,0);
    \node at ([shift={(\fang+\ask:{\edpt} and \ehgt)}]4*\ewdt,0) {$\scriptstyle h_2h_1^{-1}$};

    \draw ([shift={(\fang+\ang:{\edpt} and \ehgt)}]0,0) -- ([shift={(\fang+\ang:{\edpt} and \ehgt)}]8*\ewdt,0);
    \draw [singlearrowreversed] ([shift={(\fang+\ang:{\edpt} and \ehgt)}]0,0) -- ([shift={(\fang+\ang:{\edpt} and \ehgt)}]\ewdt,0);
    \draw [singlearrowreversed] ([shift={(\fang+\ang:{\edpt} and \ehgt)}]7*\ewdt,0) -- ([shift={(\fang+\ang:{\edpt} and \ehgt)}]8*\ewdt,0);
    \node at ([shift={(\fang+\ask+\ang:{\edpt} and \ehgt)}]4*\ewdt,0) {$\scriptstyle h_3h_2^{-1}$};

    \draw ([shift={(\fang+2*\ang:{\edpt} and \ehgt)}]0,0) -- ([shift={(\fang+2*\ang:{\edpt} and \ehgt)}]8*\ewdt,0);
    \draw [singlearrowreversed] ([shift={(\fang+2*\ang:{\edpt} and \ehgt)}]0,0) -- ([shift={(\fang+2*\ang:{\edpt} and \ehgt)}]\ewdt,0);
    \draw [singlearrowreversed] ([shift={(\fang+2*\ang:{\edpt} and \ehgt)}]7*\ewdt,0) -- ([shift={(\fang+2*\ang:{\edpt} and \ehgt)}]8*\ewdt,0);
    \node at ([shift={(\fang+\ask+2*\ang:{\edpt} and \ehgt)}]4*\ewdt,0) {$\scriptstyle h_1h_3^{-1}$};

    \draw [singlearrowreversed,draw] ([shift={(\fang+\ang:{\edpt} and \ehgt)}]5*\ewdt,0) arc (\fang+\ang:\fang+2*\ang:{\edpt} and \ehgt);
    \node at ([shift={(\fang+1.5*\ang:{\edpt} and \ehgt)}]5*\ewdt+\hsk,0) {$\scriptstyle t_{3}$};

    \draw [singlearrowreversed,draw] ([shift={(90:{\edpt} and \ehgt)}]6*\ewdt,0) arc (90:\fang:{\edpt} and \ehgt);
    \draw [dashed] ([shift={(-90:{\edpt} and \ehgt)}]6*\ewdt,0) arc (-90:90:{\edpt} and \ehgt);
    \draw [singlearrowreversed,draw] ([shift={(\fang+2*\ang:{\edpt} and \ehgt)}]6*\ewdt,0) arc (\fang+2*\ang:270:{\edpt} and \ehgt);
    \node at ([shift={({0.6*(\fang-90)+90}:{\edpt} and \ehgt)}]6*\ewdt+\hsk,0) {$\scriptstyle t_{1}$};

    \draw [singlearrowreversed,draw] ([shift={(\fang:{\edpt} and \ehgt)}]7*\ewdt,0) arc (\fang:\fang+\ang:{\edpt} and \ehgt);
    \node at ([shift={(\fang+0.5*\ang:{\edpt} and \ehgt)}]7*\ewdt+\hsk,0) {$\scriptstyle t_{2}$};

    \draw (8*\ewdt,0) ellipse ({\edpt} and \ehgt);
    \node at ([shift={(\fang:{\edpt} and \ehgt)}]8*\ewdt-\hsk,0) {$\scriptstyle 1$};
    \node at ([shift={(\fang+\ang:{\edpt} and \ehgt)}]8*\ewdt-\hsk,0) {$\scriptstyle 2$};
    \node at ([shift={(\fang+2*\ang:{\edpt} and \ehgt)}]8*\ewdt-\hsk,0) {$\scriptstyle 3$};

    \draw ([shift={(90:{\edpt} and \ehgt)}]0,0) -- ([shift={(90:{\edpt} and \ehgt)}]8*\ewdt,0);
    \draw ([shift={(-90:{\edpt} and \ehgt)}]0,0) -- ([shift={(-90:{\edpt} and \ehgt)}]8*\ewdt,0);
  \end{tikzpicture}
  \caption{
    The network $\cN^{c,c^{-1}}$ for $c = s_2 s_1 s_3$.
    It encodes the factorization of a generic element of $\widetilde{LSL}_3^{c,c^{-1}}$ as $g = x_{\ol{2}}(t_{\ol{2}})x_{\ol{1}}(t_{\ol{1}})x_{\ol{3}}(t_{\ol{3}})hx_{3}(t_{3})x_{1}(t_{1})x_{2}(t_{2})$.
  }
  \label{fig:totalnetwork}
\end{figure}

By a \newword{path} in $\cN^{u,v}$ we always mean a path which begins at an incoming boundary vertex, is directed compatibly with the edges of $\cN^{u,v}$, and ends at an outgoing boundary vertex.
If $P = \{P_i\}$ is a collection of paths, we write $\partial_{in}P \subset \ZZ_n$ for the set of incoming boundary vertices at which paths in $P$ begin; $\partial_{out}P \subset \ZZ_n$ denotes the corresponding set of outgoing boundary vertices.
The \newword{weight} of a path is the product of the weights of the edges it traverses, and the weight $wt(P)$ of a collection of paths is the product of the weights of its constituent paths.

Given $S \subsetneq \ZZ_n$, let
\[
  \omega_S
  =
  \sum_{i \in S} \omega_{i+1} - \omega_i \in P^\circ \subset \widehat{P}.
\]
Recall that $P^\circ$, the weight lattice of $SL_n$, is contained in the complement of both the Tits cone and its negative in $\widehat{P}$.
The level zero representation $V(\omega_S)$ is $\big(\bigwedge^{|S|}\kk^n\big)[\loopvar^{\pm 1}]$ with its natural $\widetilde{LSL}_n$ action.
Definition~\ref{def:network} is motivated by the following observation:

\begin{proposition}
  \label{prop:minorsfrompaths}
  Suppose $g\in G^{u,v}$ factors as
  \[
  g = x_{\ol{\imath_1}}(t_{\ol{1}})\cdots x_{\ol{\imath_{\ell(u)}}}(t_{\ol{\ell(u)}})hx_{j_1}(t_{1}) \cdots x_{j_{\ell(v)}}(t_{\ell(v)})
  \]
  and let $h_i:= h^{\omega_i}$.
  Then
  \[
    \Delta_{\omega_S}(g) = \sum_{P: S \to S} wt(P),
  \]
  where the sum is over collections $P$ of pairwise disjoint paths in $\cN^{u,v}$ with $\partial_{in}P = \partial_{out}P = S$.
\end{proposition}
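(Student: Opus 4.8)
The plan is to read off $\Delta_{\omega_S}(g)$ as an honest matrix coefficient of $g$ acting on $V(\omega_S)$ in its standard monomial basis, and then to recognize $\cN^{u,v}$ as the wiring diagram that evaluates that coefficient. First I would identify the relevant weight vector. Since $\omega_S=\sum_{i\in S}(\omega_{i+1}-\omega_i)$ lies in $P^\circ$ and so has trivial $\delta$-component, the weight space $V(\omega_S)_{\omega_S}\subset\big(\!\bigwedge^{|S|}\kk^n\big)[\loopvar^{\pm1}]$ is one-dimensional, spanned by the constant loop $v_{\omega_S}:=\bigwedge_{j\in S'}e_j$, where $e_1,\dotsc,e_n$ is the standard basis of $\kk^n$ and $S':=\{\,i+1\bmod n:i\in S\,\}$; this uses only the bookkeeping identity $\omega_{i+1}-\omega_i\leftrightarrow\epsilon_{i+1}$ between weights of $\widetilde{LSL}_n$ and of $\kk^n$ and the $\ZZ_n$-labeling fixed before Definition~\ref{def:network}. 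Because $g\in\widetilde{LSL}_n$ has trivial loop-rotation component and $V(\omega_S)$ has level zero, $g$ acts on $V(\omega_S)$ through its image $\gamma(\loopvar)$ in the loop group $LSL_n=SL_n\big(\kk[\loopvar^{\pm1}]\big)$, by applying $\bigwedge^{|S|}\gamma(\loopvar)$ pointwise in $\loopvar$. Consequently $\Delta_{\omega_S}(g)$ is the coefficient of $\loopvar^0$ in the principal $S'{\times}S'$ minor of $\gamma(\loopvar)$.

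Next I would expand this minor by Cauchy--Binet along the factorization $g=x_{\ol{\imath_1}}(t_{\ol1})\cdots h\cdots x_{j_{\ell(v)}}(t_{\ell(v)})$. Each factor maps to an elementary loop: a constant elementary matrix $I+tE_{p,p+1}$ or $I+tE_{p+1,p}$ for a finite node $p$, its $\loopvar^{\pm1}$-twisted analogue $I+t\loopvar^{\pm1}E_{n,1}$ or $I+t\loopvar^{\mp1}E_{1,n}$ for the affine node $0\equiv n$, and a constant diagonal matrix recording the action of $h$, whose $j$-th diagonal entry is the weight attached by Definition~\ref{def:network} to level $j-1$. Taking $|S|$-th exterior powers, in the product each factor either preserves the current $|S|$-subset of levels --- contributing the product of its level-weights over that subset --- or, when available, performs the single transposition associated to its bridge --- contributing that bridge's weight $t$ (up to sign). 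So Cauchy--Binet presents the $S'{\times}S'$ minor of $\gamma(\loopvar)$ as a sum over chains $S'=T_0,T_1,\dotsc,T_N=S'$ of $|S|$-subsets, one step per factor, of the associated monomials.

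Finally I would match these chains with pairwise disjoint families $P$ of paths in $\cN^{u,v}$ with $\partial_{in}P=\partial_{out}P=S$: reading from the incoming to the outgoing boundary, $T_r$ is the set of levels occupied by the $|S|$ paths between the $r$-th and $(r{+}1)$-st constituent networks, a step $T_r\neq T_{r+1}$ is one path crossing the corresponding bridge, and the requirement $|T_r|=|S|$ for all $r$ is exactly vertex-disjointness; the weight conventions of Definition~\ref{def:network} then make $wt(P)$ equal the chain's monomial factor by factor. The main obstacle is the affine node together with the cylindrical topology: the twist $\loopvar^{\pm1}$ coming from $\varphi_0$ must be bookkept as a path winding around the cylinder, so that the $\loopvar$-degree of a chain's monomial equals the total winding of the associated family, and one must verify that extracting the $\loopvar^0$-part --- which is all that $\Delta_{\omega_S}$ sees, since $\omega_S$ has no $\delta$-component --- leaves precisely the families counted on the right-hand side, with the signs of the affine minors cancelling so that the result is the unsigned path sum. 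Granting this, the argument is the cylindrical Lindström--Gessel--Viennot lemma, paralleling \cite{FZ99} in the $SL_n$/disk case and the loop-group networks of \cite{GSV12,FM14}; since only $u=c$, $v=c^{-1}$ is needed in the sequel, I would carry out the winding and sign bookkeeping in that case.
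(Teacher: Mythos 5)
Your proposal follows essentially the same route as the paper's proof: both identify $\Delta_{\omega_S}(g)$ with the diagonal matrix coefficient of $g$ acting on a wedge-basis vector of $\big(\bigwedge^{|S|}\kk^n\big)[\loopvar^{\pm1}]$, and both convert this to a sum over nonintersecting path collections in $\cN^{u,v}$ by a Lindström--Gessel--Viennot argument applied factor by factor along the decomposition of $g$. You spell out steps the paper leaves implicit --- the Cauchy--Binet expansion into chains of $|S|$-subsets, the index shift $S\mapsto S'$ between network levels and exterior-power basis vectors, and the bookkeeping of $\loopvar$-degree and signs at the affine node --- but these are elaborations of, not departures from, the paper's argument.
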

\begin{proof}
  The representation $\big(\bigwedge^{|S|}\kk^n\big)[\loopvar^{\pm 1}]$ has a natural basis $\{ e_{k_1} \wedge \cdots \wedge e_{k_{|S|}} \loopvar^d\}$ labelled by pairs of an $|S|$-element subset of $\ZZ_n$ and an integer $d$.
  The networks associated to $x_i(t)$, $x_{\ol{\imath}}(t)$, and $h$ directly encode the action of the corresponding group elements in this basis (see \cite{GSV12,FM14} for related constructions).
  If $S = \{k_1,\dotsc,k_{|S|}\}$, then by a generalized Lindstr\"om-Gessel-Viennot argument, the sum over collections of nonintersecting paths with $\partial_{in}P = \partial_{out}P = S$ computes the diagonal matrix coefficient by which $g$ rescales the basis element $e_{k_1} \wedge \cdots \wedge e_{k_{|S|}}$.
  Since this element has weight $\omega_S$, this matrix coefficient is precisely $\Delta_{\omega_S}(g)$.
\end{proof}

\begin{example}
  \label{ex:minor_computation}
  We use Proposition~\ref{prop:minorsfrompaths} to compute the value of the generalized minor $\Delta_{\omega_1-\omega_2}$ on the generic element $g$ represented by the network of Figure~\ref{fig:totalnetwork}.
  In this case $S=\{2,3\}$ and we get
  \[
    \delimitershortfall -2pt
    \begin{array}{ccccccc}
      \Delta_{\omega_1-\omega_2}(g)\!
      &
      =
      &
      wt\left(
        \begin{tikzpicture}[baseline={([yshift=-0.5ex]current bounding box.center)}]
          \pgfmathsetmacro{\scaling}{0.298} 
          \pgfmathsetmacro{\pathwdt}{5*\scaling} 
          \pgfmathsetmacro{\ehgt}{2*\scaling} 
          \pgfmathsetmacro{\edpt}{0.6*\scaling} 
          \pgfmathsetmacro{\ewdt}{0.9*\scaling} 
          \pgfmathsetmacro{\fang}{150} 
          \pgfmathsetmacro{\ang}{25} 
          \pgfmathsetmacro{\ask}{-7} 
          \pgfmathsetmacro{\hsk}{-0.2*\scaling} 

          \draw ([shift={(90:{\edpt} and \ehgt)}]0,0) arc (90:270:{\edpt} and \ehgt);
          \draw [dashed] ([shift={(-90:{\edpt} and \ehgt)}]0,0) arc (-90:90:{\edpt} and \ehgt);

          \draw ([shift={(\fang:{\edpt} and \ehgt)}]\ewdt,0) arc (\fang:\fang+\ang:{\edpt} and \ehgt);

          \draw ([shift={(90:{\edpt} and \ehgt)}]2*\ewdt,0) arc (90:\fang:{\edpt} and \ehgt);
          \draw [dashed] ([shift={(-90:{\edpt} and \ehgt)}]2*\ewdt,0) arc (-90:90:{\edpt} and \ehgt);
          \draw ([shift={(\fang+2*\ang:{\edpt} and \ehgt)}]2*\ewdt,0) arc (\fang+2*\ang:270:{\edpt} and \ehgt);

          \draw ([shift={(\fang+\ang:{\edpt} and \ehgt)}]3*\ewdt,0) arc (\fang+\ang:\fang+2*\ang:{\edpt} and \ehgt);

          \draw ([shift={(\fang:{\edpt} and \ehgt)}]0,0) -- ([shift={(150:{\edpt} and \ehgt)}]8*\ewdt,0);
          \draw ([shift={(\fang:{\edpt} and \ehgt)}]0,0) -- ([shift={(150:{\edpt} and \ehgt)}]\ewdt,0);
          \draw ([shift={(\fang:{\edpt} and \ehgt)}]7*\ewdt,0) -- ([shift={(150:{\edpt} and \ehgt)}]8*\ewdt,0);

          \draw ([shift={(\fang+\ang:{\edpt} and \ehgt)}]0,0) -- ([shift={(\fang+\ang:{\edpt} and \ehgt)}]8*\ewdt,0);
          \draw ([shift={(\fang+\ang:{\edpt} and \ehgt)}]0,0) -- ([shift={(\fang+\ang:{\edpt} and \ehgt)}]\ewdt,0);
          \draw ([shift={(\fang+\ang:{\edpt} and \ehgt)}]7*\ewdt,0) -- ([shift={(\fang+\ang:{\edpt} and \ehgt)}]8*\ewdt,0);

          \draw ([shift={(\fang+2*\ang:{\edpt} and \ehgt)}]0,0) -- ([shift={(\fang+2*\ang:{\edpt} and \ehgt)}]8*\ewdt,0);
          \draw ([shift={(\fang+2*\ang:{\edpt} and \ehgt)}]0,0) -- ([shift={(\fang+2*\ang:{\edpt} and \ehgt)}]\ewdt,0);
          \draw ([shift={(\fang+2*\ang:{\edpt} and \ehgt)}]7*\ewdt,0) -- ([shift={(\fang+2*\ang:{\edpt} and \ehgt)}]8*\ewdt,0);

          \draw ([shift={(\fang+\ang:{\edpt} and \ehgt)}]5*\ewdt,0) arc (\fang+\ang:\fang+2*\ang:{\edpt} and \ehgt);

          \draw ([shift={(90:{\edpt} and \ehgt)}]6*\ewdt,0) arc (90:\fang:{\edpt} and \ehgt);
          \draw [dashed] ([shift={(-90:{\edpt} and \ehgt)}]6*\ewdt,0) arc (-90:90:{\edpt} and \ehgt);
          \draw ([shift={(\fang+2*\ang:{\edpt} and \ehgt)}]6*\ewdt,0) arc (\fang+2*\ang:270:{\edpt} and \ehgt);

          \draw ([shift={(\fang:{\edpt} and \ehgt)}]7*\ewdt,0) arc (\fang:\fang+\ang:{\edpt} and \ehgt);

          \draw (8*\ewdt,0) ellipse ({\edpt} and \ehgt);

          \draw ([shift={(90:{\edpt} and \ehgt)}]0,0) -- ([shift={(90:{\edpt} and \ehgt)}]8*\ewdt,0);
          \draw ([shift={(-90:{\edpt} and \ehgt)}]0,0) -- ([shift={(-90:{\edpt} and \ehgt)}]8*\ewdt,0);

          \draw [stealth-, line width=\pathwdt] ([shift={(\fang+\ang:{\edpt} and \ehgt)}]0,0) -- ([shift={(\fang+\ang:{\edpt} and \ehgt)}]8*\ewdt,0);
          \draw [stealth-, line width=\pathwdt] ([shift={(\fang+2*\ang:{\edpt} and \ehgt)}]0,0) -- ([shift={(\fang+2*\ang:{\edpt} and \ehgt)}]8*\ewdt,0);

        \end{tikzpicture}
      \right)
      &
      +
      &
      wt\left(
        \begin{tikzpicture}[baseline={([yshift=-0.5ex]current bounding box.center)}]
          \pgfmathsetmacro{\scaling}{0.298} 
          \pgfmathsetmacro{\pathwdt}{5*\scaling} 
          \pgfmathsetmacro{\ehgt}{2*\scaling} 
          \pgfmathsetmacro{\edpt}{0.6*\scaling} 
          \pgfmathsetmacro{\ewdt}{0.9*\scaling} 
          \pgfmathsetmacro{\fang}{150} 
          \pgfmathsetmacro{\ang}{25} 
          \pgfmathsetmacro{\ask}{-7} 
          \pgfmathsetmacro{\hsk}{-0.2*\scaling} 

          \draw ([shift={(90:{\edpt} and \ehgt)}]0,0) arc (90:270:{\edpt} and \ehgt);
          \draw [dashed] ([shift={(-90:{\edpt} and \ehgt)}]0,0) arc (-90:90:{\edpt} and \ehgt);

          \draw ([shift={(\fang:{\edpt} and \ehgt)}]\ewdt,0) arc (\fang:\fang+\ang:{\edpt} and \ehgt);

          \draw ([shift={(90:{\edpt} and \ehgt)}]2*\ewdt,0) arc (90:\fang:{\edpt} and \ehgt);
          \draw [dashed] ([shift={(-90:{\edpt} and \ehgt)}]2*\ewdt,0) arc (-90:90:{\edpt} and \ehgt);
          \draw ([shift={(\fang+2*\ang:{\edpt} and \ehgt)}]2*\ewdt,0) arc (\fang+2*\ang:270:{\edpt} and \ehgt);

          \draw ([shift={(\fang+\ang:{\edpt} and \ehgt)}]3*\ewdt,0) arc (\fang+\ang:\fang+2*\ang:{\edpt} and \ehgt);

          \draw ([shift={(\fang:{\edpt} and \ehgt)}]0,0) -- ([shift={(150:{\edpt} and \ehgt)}]8*\ewdt,0);
          \draw ([shift={(\fang:{\edpt} and \ehgt)}]0,0) -- ([shift={(150:{\edpt} and \ehgt)}]\ewdt,0);
          \draw ([shift={(\fang:{\edpt} and \ehgt)}]7*\ewdt,0) -- ([shift={(150:{\edpt} and \ehgt)}]8*\ewdt,0);

          \draw ([shift={(\fang+\ang:{\edpt} and \ehgt)}]0,0) -- ([shift={(\fang+\ang:{\edpt} and \ehgt)}]8*\ewdt,0);
          \draw ([shift={(\fang+\ang:{\edpt} and \ehgt)}]0,0) -- ([shift={(\fang+\ang:{\edpt} and \ehgt)}]\ewdt,0);
          \draw ([shift={(\fang+\ang:{\edpt} and \ehgt)}]7*\ewdt,0) -- ([shift={(\fang+\ang:{\edpt} and \ehgt)}]8*\ewdt,0);

          \draw ([shift={(\fang+2*\ang:{\edpt} and \ehgt)}]0,0) -- ([shift={(\fang+2*\ang:{\edpt} and \ehgt)}]8*\ewdt,0);
          \draw ([shift={(\fang+2*\ang:{\edpt} and \ehgt)}]0,0) -- ([shift={(\fang+2*\ang:{\edpt} and \ehgt)}]\ewdt,0);
          \draw ([shift={(\fang+2*\ang:{\edpt} and \ehgt)}]7*\ewdt,0) -- ([shift={(\fang+2*\ang:{\edpt} and \ehgt)}]8*\ewdt,0);

          \draw ([shift={(\fang+\ang:{\edpt} and \ehgt)}]5*\ewdt,0) arc (\fang+\ang:\fang+2*\ang:{\edpt} and \ehgt);

          \draw ([shift={(90:{\edpt} and \ehgt)}]6*\ewdt,0) arc (90:\fang:{\edpt} and \ehgt);
          \draw [dashed] ([shift={(-90:{\edpt} and \ehgt)}]6*\ewdt,0) arc (-90:90:{\edpt} and \ehgt);
          \draw ([shift={(\fang+2*\ang:{\edpt} and \ehgt)}]6*\ewdt,0) arc (\fang+2*\ang:270:{\edpt} and \ehgt);

          \draw ([shift={(\fang:{\edpt} and \ehgt)}]7*\ewdt,0) arc (\fang:\fang+\ang:{\edpt} and \ehgt);

          \draw (8*\ewdt,0) ellipse ({\edpt} and \ehgt);

          \draw ([shift={(90:{\edpt} and \ehgt)}]0,0) -- ([shift={(90:{\edpt} and \ehgt)}]8*\ewdt,0);
          \draw ([shift={(-90:{\edpt} and \ehgt)}]0,0) -- ([shift={(-90:{\edpt} and \ehgt)}]8*\ewdt,0);

          \draw [stealth-, line width=\pathwdt] ([shift={(\fang+\ang:{\edpt} and \ehgt)}]0,0) -- ([shift={(\fang+\ang:{\edpt} and \ehgt)}]\ewdt,0) arc (\fang+\ang:\fang:{\edpt} and \ehgt) -- ([shift={(\fang:{\edpt} and \ehgt)}]7*\ewdt,0) arc (\fang:\fang+\ang:{\edpt} and \ehgt) -- ([shift={(\fang+\ang:{\edpt} and \ehgt)}]8*\ewdt,0);
          \draw [stealth-, line width=\pathwdt] ([shift={(\fang+2*\ang:{\edpt} and \ehgt)}]0,0) -- ([shift={(\fang+2*\ang:{\edpt} and \ehgt)}]8*\ewdt,0);

        \end{tikzpicture}
      \right)
      &
      +
      &
      wt\left(
        \begin{tikzpicture}[baseline={([yshift=-0.5ex]current bounding box.center)}]
          \label{fig:level_zero_example}
          \pgfmathsetmacro{\scaling}{0.298} 
          \pgfmathsetmacro{\pathwdt}{5*\scaling} 
          \pgfmathsetmacro{\ehgt}{2*\scaling} 
          \pgfmathsetmacro{\edpt}{0.6*\scaling} 
          \pgfmathsetmacro{\ewdt}{0.9*\scaling} 
          \pgfmathsetmacro{\fang}{150} 
          \pgfmathsetmacro{\ang}{25} 
          \pgfmathsetmacro{\ask}{-7} 
          \pgfmathsetmacro{\hsk}{-0.2*\scaling} 

          \draw ([shift={(90:{\edpt} and \ehgt)}]0,0) arc (90:270:{\edpt} and \ehgt);
          \draw [dashed] ([shift={(-90:{\edpt} and \ehgt)}]0,0) arc (-90:90:{\edpt} and \ehgt);

          \draw ([shift={(\fang:{\edpt} and \ehgt)}]\ewdt,0) arc (\fang:\fang+\ang:{\edpt} and \ehgt);

          \draw ([shift={(90:{\edpt} and \ehgt)}]2*\ewdt,0) arc (90:\fang:{\edpt} and \ehgt);
          \draw [dashed] ([shift={(-90:{\edpt} and \ehgt)}]2*\ewdt,0) arc (-90:90:{\edpt} and \ehgt);
          \draw ([shift={(\fang+2*\ang:{\edpt} and \ehgt)}]2*\ewdt,0) arc (\fang+2*\ang:270:{\edpt} and \ehgt);

          \draw ([shift={(\fang+\ang:{\edpt} and \ehgt)}]3*\ewdt,0) arc (\fang+\ang:\fang+2*\ang:{\edpt} and \ehgt);

          \draw ([shift={(\fang:{\edpt} and \ehgt)}]0,0) -- ([shift={(150:{\edpt} and \ehgt)}]8*\ewdt,0);
          \draw ([shift={(\fang:{\edpt} and \ehgt)}]0,0) -- ([shift={(150:{\edpt} and \ehgt)}]\ewdt,0);
          \draw ([shift={(\fang:{\edpt} and \ehgt)}]7*\ewdt,0) -- ([shift={(150:{\edpt} and \ehgt)}]8*\ewdt,0);

          \draw ([shift={(\fang+\ang:{\edpt} and \ehgt)}]0,0) -- ([shift={(\fang+\ang:{\edpt} and \ehgt)}]8*\ewdt,0);
          \draw ([shift={(\fang+\ang:{\edpt} and \ehgt)}]0,0) -- ([shift={(\fang+\ang:{\edpt} and \ehgt)}]\ewdt,0);
          \draw ([shift={(\fang+\ang:{\edpt} and \ehgt)}]7*\ewdt,0) -- ([shift={(\fang+\ang:{\edpt} and \ehgt)}]8*\ewdt,0);

          \draw ([shift={(\fang+2*\ang:{\edpt} and \ehgt)}]0,0) -- ([shift={(\fang+2*\ang:{\edpt} and \ehgt)}]8*\ewdt,0);
          \draw ([shift={(\fang+2*\ang:{\edpt} and \ehgt)}]0,0) -- ([shift={(\fang+2*\ang:{\edpt} and \ehgt)}]\ewdt,0);
          \draw ([shift={(\fang+2*\ang:{\edpt} and \ehgt)}]7*\ewdt,0) -- ([shift={(\fang+2*\ang:{\edpt} and \ehgt)}]8*\ewdt,0);

          \draw ([shift={(\fang+\ang:{\edpt} and \ehgt)}]5*\ewdt,0) arc (\fang+\ang:\fang+2*\ang:{\edpt} and \ehgt);

          \draw ([shift={(90:{\edpt} and \ehgt)}]6*\ewdt,0) arc (90:\fang:{\edpt} and \ehgt);
          \draw [dashed] ([shift={(-90:{\edpt} and \ehgt)}]6*\ewdt,0) arc (-90:90:{\edpt} and \ehgt);
          \draw ([shift={(\fang+2*\ang:{\edpt} and \ehgt)}]6*\ewdt,0) arc (\fang+2*\ang:270:{\edpt} and \ehgt);

          \draw ([shift={(\fang:{\edpt} and \ehgt)}]7*\ewdt,0) arc (\fang:\fang+\ang:{\edpt} and \ehgt);

          \draw (8*\ewdt,0) ellipse ({\edpt} and \ehgt);

          \draw ([shift={(90:{\edpt} and \ehgt)}]0,0) -- ([shift={(90:{\edpt} and \ehgt)}]8*\ewdt,0);
          \draw ([shift={(-90:{\edpt} and \ehgt)}]0,0) -- ([shift={(-90:{\edpt} and \ehgt)}]8*\ewdt,0);

          \draw [stealth-, line width=\pathwdt] ([shift={(\fang+\ang:{\edpt} and \ehgt)}]0,0) -- ([shift={(\fang+\ang:{\edpt} and \ehgt)}]\ewdt,0) arc (\fang+\ang:\fang:{\edpt} and \ehgt) -- ([shift={(\fang:{\edpt} and \ehgt)}]7*\ewdt,0) arc (\fang:\fang+\ang:{\edpt} and \ehgt) -- ([shift={(\fang+\ang:{\edpt} and \ehgt)}]8*\ewdt,0);
          \draw [stealth-, line width=\pathwdt] ([shift={(\fang+2*\ang:{\edpt} and \ehgt)}]0,0) -- ([shift={(\fang+2*\ang:{\edpt} and \ehgt)}]3*\ewdt,0) arc (\fang+2*\ang:\fang+\ang:{\edpt} and \ehgt) -- ([shift={(\fang+\ang:{\edpt} and \ehgt)}]5*\ewdt,0) arc (\fang+\ang:\fang+2*\ang:{\edpt} and \ehgt) -- ([shift={(\fang+2*\ang:{\edpt} and \ehgt)}]8*\ewdt,0);

        \end{tikzpicture}
      \right)
      \\
      \\
      &
      =
      &
      h_3h_2^{-1}
      \cdot
      h_1h_3^{-1}
      &
      +
      &
      t_{\ol{2}}h_2h_1^{-1}t_2
      \cdot
      h_1h_3^{-1}
      &
      +
      &
      t_{\ol{2}}h_2h_1^{-1}t_2
      \cdot
      t_{\ol{3}}h_3h_2^{-1}t_3
      \\
      \\
      &
      =
      &
      \multicolumn{5}{l}{h_1h_2^{-1} + t_{\ol{2}}t_2h_2h_3^{-1} + t_{\ol{2}}t_2t_{\ol{3}}t_3h_1^{-1}h_3}
    \end{array}
  \]
\end{example}

We now return to the specific case of $\widetilde{LSL}_n^{c,c^{-1}}$, where $c = s_{\sigma_1}\cdots s_{\sigma_{n}}$ is a fixed Coxeter element.
From now on we reindex the factorization of Equation~\eqref{eq:uvfactorization} in a way tailored to this case:
\begin{equation}
  \label{eq:coxfactorization}
  g = x_{\ol{\sigma_1}}(t_{\ol{\sigma_1}}) \cdots x_{\ol{\sigma_n}}(t_{\ol{\sigma_n}})h x_{\sigma_n}(t_{\sigma_n}) \cdots x_{\sigma_1}(t_{\sigma_1}).
\end{equation}
To use the preceding proposition effectively, we collect some elementary combinatorial observations about collections of pairwise disjoint paths $P$ in $\cN^{c,c^{-1}}$.
Note that when $u$ and $v$ are Coxeter elements the choice of reduced word in Definition~\ref{def:network} is immaterial: different reduced words differ by relations of the form $s_i s_j = s_j s_i$ for $a_{ij} = 0$, and these relations do not change the combinatorics of the network.
Given a collection $P$ of disjoint paths, define its \newword{bridge set} $\beta(P)\subset\ZZ_n$ by letting $j\in\beta(P)$ if and only if a path in $P$ traverses the unique bridge of weight $t_j$.

\begin{lemma}
  \label{lem:phi}
  Let $P$ be a collection of pairwise disjoint paths in $\cN^{c,c^{-1}}$.
  Then:
  \begin{enumerate}
    \item for $j\in\partial_{in}P$, $j+1\in\beta(P)$ implies $j+1\to j$ in $Q$ and $j\in\beta(P)$;
    \item for $j\notin\partial_{in}P$, $j\in\beta(P)$ implies $j\to j+1$ in $Q$ and $j+1\in\beta(P)$.
  \end{enumerate}
\end{lemma}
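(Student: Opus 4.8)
The plan is to argue directly from the combinatorial description of $\cN^{c,c^{-1}}$ in Definition~\ref{def:network}, reading the network from its incoming (right) boundary toward its outgoing (left) boundary. First I would record the relevant structure. A bridge of weight $t_j$ occurs only inside the constituent network of the factor $x_{\sigma_a}(t_{\sigma_a})$ with $\sigma_a=j$, it runs between levels $j-1$ and $j$, and it is oriented \emph{from level $j$ down to level $j-1$}; all bridges of weight $t_1,\dots,t_n$ lie in the ``right half'' of $\cN^{c,c^{-1}}$ (the part built from the $x_{\sigma_k}$ factors) and, reading from the incoming boundary, are met in the order $x_{\sigma_1},\dots,x_{\sigma_n}$. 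Consequently the only bridges a path lying on level $j$ in the right half can interact with are the weight-$t_j$ bridge (which may carry it down to level $j-1$) and the weight-$t_{j+1}$ bridge (which runs \emph{onto} level $j$ from level $j+1$, hence cannot be used from level $j$); moreover there are no upward bridges anywhere in the right half. Second, I would translate bridge order into arrow direction: the weight-$t_j$ bridge is met before the weight-$t_{j+1}$ bridge if and only if $j$ precedes $j+1$ in any reduced word for $c$, and by the defining property $b_{\sigma_a\sigma_b}\ge 0$ for $a<b$ together with the dictionary between $B$ and $Q$, this happens precisely when the arrow of $Q$ between $j$ and $j+1$ points $j+1\to j$.

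For part (1), let $P_0$ be the path of $P$ beginning at the level-$j$ incoming vertex (unique by disjointness, using $j\in\partial_{in}P$). Since in the right half the only bridge leading off level $j$ is the weight-$t_j$ bridge, $P_0$ stays on level $j$ until it meets that bridge. Suppose the weight-$t_{j+1}$ bridge were met first; then $P_0$ traverses the entire level-$j$ line of the constituent carrying that bridge, while the path using that bridge (which exists since $j+1\in\beta(P)$) drops onto level $j$ there and continues along level $j$ to that constituent's outgoing boundary, forcing the two paths to share level-$j$ vertices — impossible, since the paths in $P$ are pairwise disjoint and these two are distinct (the second cannot be $P_0$, as $P_0$ can never reach level $j+1$ in the right half). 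Therefore the weight-$t_j$ bridge is met first, which by the preliminary step gives $j+1\to j$ in $Q$; and $P_0$ is then forced to use it to leave level $j$, so $j\in\beta(P)$.

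For part (2), let $P_1\in P$ be a path traversing the weight-$t_j$ bridge, so $P_1$ sits on level $j$ just to the right of that bridge. Since $j\notin\partial_{in}P$, the path $P_1$ does not begin on level $j$, hence reached level $j$ via a bridge met earlier (closer to the incoming boundary). The only bridge ending on level $j$ that lies before the weight-$t_j$ bridge is the weight-$t_{j+1}$ bridge, so $P_1$ must use it; thus $j+1\in\beta(P)$, and this bridge is met strictly before the weight-$t_j$ bridge, so the preliminary step gives $j\to j+1$ in $Q$.

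The step needing the most care is the collision argument in part (1): one must verify precisely that $P_0$ and the other path genuinely occupy overlapping portions of the \emph{same} level-$j$ line of a single constituent network — not merely the same cyclic level at some point — and rule out that they coincide. Everything else (bridge orientations, the uniqueness of the weight-$t_j$ and weight-$t_{j+1}$ bridges, which uses that $c$ is a Coxeter element so that each $s_j$ occurs exactly once, and the bridge-order/arrow-direction dictionary) is routine bookkeeping.
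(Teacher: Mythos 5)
Your approach is the same as the paper's: establish the dictionary between the order of the $t_j$-bridges along the network (from the incoming boundary) and the orientation of arrows of $Q$, which is exactly the content of the paper's Figure~\ref{fig:quivernetwork}, and then read off both conclusions using disjointness of the paths. The paper compresses the second step to ``compare the statements against the picture,'' whereas you spell out the collision argument, so your write-up is if anything more detailed than the published one. Your translation of the factorization~\eqref{eq:coxfactorization} into the network data is accurate, and both cases are handled.

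One claim deserves a caveat: the parenthetical ``$P_0$ can never reach level $j+1$ in the right half.'' This is not literally true in general --- since the levels are cyclic and all bridges in the right half go down one level, a path starting on level $j$ \emph{can} reach level $j+1$ by wrapping all the way around, which happens precisely when the $\sigma$-order is $j, j-1, \ldots, j+1$. What saves you is that in the contradiction scenario where the claim is invoked (weight-$t_{j+1}$ bridge met first), reaching level $j+1$ would force $t_j, t_{j-1}, \ldots, t_{j+2}$ all to lie to the right of $t_{j+1}$, and in particular $t_j$ to precede $t_{j+1}$, contradicting the hypothesis. A similar remark applies to the last sentence of part~(1): ``$P_0$ is then forced to use it'' needs a second collision argument of the same type (the path arriving on level $j$ via the $t_{j+1}$-bridge will meet $P_0$ to the left of the $t_j$-bridge unless $P_0$ has dropped off level $j$, and the only way for $P_0$ to \emph{be} that path requires it to have already traversed the $t_j$-bridge). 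Neither observation breaks the proof, and you flag the relevant delicacy yourself in the final paragraph, but as written the justification for both claims is left implicit. With those two points tightened, the argument is complete and faithful to the paper's.
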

\begin{proof}
  If $c = s_{\sigma_1}\cdots s_{\sigma_{n}}$, then by our definitions $(\sigma_1,\ldots,\sigma_{n})$ is sink adapted for $Q$.
  The orientation of the arrow connecting vertices $j$ and $j+1$ is thus related to the local structure of the network $\cN^{c,c^{-1}}$ as in Figure~\ref{fig:quivernetwork}.
  The result follows immediately by comparing the statements against the picture.
\end{proof}

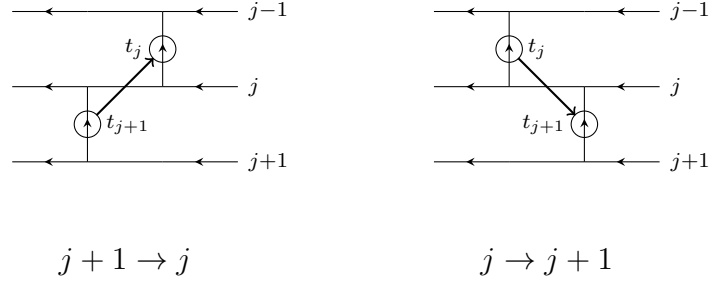
\begin{figure}
	\centering
  \begin{tikzpicture}
    \node[below] at (0,-2) {$j+1\to j$};
    \draw[singlearrow,draw] (1.5,1) -- (0.5,1);
    \draw[-] (0.5,1) -- (-0.5,1);
    \draw[singlearrow,draw] (-0.5,1) -- (-1.5,1);
    \node[right] at (1.5,1) {$\scriptstyle j-1$};
    \draw[singlearrow,draw] (0.5,0) -- (0.5,1);
    \node[left] at (0.5,0.5) {$\scriptstyle t_j\ $};
    \draw[singlearrow,draw] (1.5,0) -- (0.5,0);
    \draw[-] (0.5,0) -- (-0.5,0);
    \draw[singlearrow,draw] (-0.5,0) -- (-1.5,0);
    \node[right] at (1.5,0) {$\scriptstyle j$};
    \draw[singlearrow,draw] (-0.5,-1) -- (-0.5,0);
    \node[right] at (-0.5,-0.5) {$\ \scriptstyle t_{j+1}$};
    \draw[singlearrow,draw] (1.5,-1) -- (0.5,-1);
    \draw[-] (0.5,-1) -- (-0.5,-1);
    \draw[singlearrow,draw] (-0.5,-1) -- (-1.5,-1);
    \node[right] at (1.5,-1) {$\scriptstyle j+1$};
    \draw (-0.5,-0.5) circle (5pt);
    \draw (0.5,0.5) circle (5pt);
    \draw[thick,->] (-0.38,-0.38) -- (0.38,0.38);
  \end{tikzpicture}
  \qquad\qquad
  \begin{tikzpicture}
    \node[below] at (0,-2) {$j\to j+1$};
    \draw[singlearrow,draw] (1.5,1) -- (0.5,1);
    \draw[-] (0.5,1) -- (-0.5,1);
    \draw[singlearrow,draw] (-0.5,1) -- (-1.5,1);
    \node[right] at (1.5,1) {$\scriptstyle j-1$};
    \draw[singlearrow,draw] (-0.5,0) -- (-0.5,1);
    \node[right] at (-0.5,0.5) {$\ \scriptstyle t_j$};
    \draw[singlearrow,draw] (1.5,0) -- (0.5,0);
    \draw[-] (0.5,0) -- (-0.5,0);
    \draw[singlearrow,draw] (-0.5,0) -- (-1.5,0);
    \node[right] at (1.5,0) {$\scriptstyle j$};
    \draw[singlearrow,draw] (0.5,-1) -- (0.5,0);
    \node[left] at (0.5,-0.5) {$\scriptstyle t_{j+1}\ $};
    \draw[singlearrow,draw] (1.5,-1) -- (0.5,-1);
    \draw[-] (0.5,-1) -- (-0.5,-1);
    \draw[singlearrow,draw] (-0.5,-1) -- (-1.5,-1);
    \node[right] at (1.5,-1) {$\scriptstyle j+1$};
    \draw (-0.5,0.5) circle (5pt);
    \draw (0.5,-0.5) circle (5pt);
    \draw[thick,->] (-0.38,0.38) -- (0.38,-0.38);
  \end{tikzpicture}
  \caption{The local picture relating the network $\cN^{c,c^{-1}}$ to the orientation of the arrow of $Q$ connecting vertices $j$ and $j+1$, where we draw the vertices on top of the corresponding bridges in $\cN^{c,c^{-1}}$}
  \label{fig:quivernetwork}
\end{figure}

We now turn to the main result of the section.
We will freely use the isomorphism $\cA_{\Qdp} \cong \kk[\widetilde{LSL}_n^{c,c^{-1}}]$ to identify elements of $\cA_{\Qdp}$ with elements of
\[
  \kk[t_{1}^{\pm1},\dotsc,t_{n}^{\pm 1},h_1^{\pm 1},\dotsc,h_{n}^{\pm 1},t_{\ol{1}}^{\pm 1},\dotsc,t_{\ol{n}}^{\pm 1}]
\]
by evaluating them on a generic $g \in \widetilde{LSL}_n^{c,c^{-1}}$ factored as in Equation~\eqref{eq:coxfactorization} (and setting $h_i := h^{\omega_i}$).
For example, we have
\begin{equation}
  \label{eq:typeAyhat}
  x_i = h_i,\quad\quad \hat{y}_i = t_it_{\ol{\imath}}h_{i-1}^{-1}h_i^2h_{i+1}^{-1},
\end{equation}
the latter following by reindexing vertices of $Q$ in the results of Lemma~\ref{lemma:coefficients_values} with the present conventions, then plugging them into the definition of $\hat{y}_i$.

\begin{theorem}
\label{th:cluster character equals minor}
  Let $\Qrep_{[k,\ell]}$ be a rigid regular representation of $Q$.
  Then under $\cA_{\Qdp} \cong \kk[\widetilde{LSL}_n^{c,c^{-1}}]$, the regular cluster variable $x_{\Qrep_{[k,\ell]}}$ is the restriction of the level zero minor $\Delta_{\grep(\Qrep_{[k,\ell]})}$.
\end{theorem}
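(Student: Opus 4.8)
The plan is to compare the combinatorial expansions of the two sides furnished by our earlier results. By Lemma~\ref{lem:regulargvectors} we have $\grep(\Qrep_{[k,\ell]}) = \omega_S$ with $S := S_{[k,\ell]}$, so Proposition~\ref{prop:minorsfrompaths} identifies the restriction of $\Delta_{\grep(\Qrep_{[k,\ell]})}$ to $\widetilde{LSL}_n^{c,c^{-1}}$ with $\sum_{P:\,S\to S} wt(P)$, the sum over collections $P$ of pairwise disjoint paths in $\cN^{c,c^{-1}}$ with $\partial_{in}P = \partial_{out}P = S$. On the other hand, Proposition~\ref{prop:regular coindices} together with \eqref{eq:typeAyhat} gives
\[
  x_{\Qrep_{[k,\ell]}}
  =
  x^{\grep(\Qrep_{[k,\ell]})}\sum_{E\in\cE_{[k,\ell]}}\prod_{j\in E}\hat y_j,
  \qquad
  x^{\grep(\Qrep_{[k,\ell]})} = \prod_{j\in S} h_{j+1}h_j^{-1},
  \qquad
  \hat y_j = t_j t_{\ol{\jmath}}\, h_{j-1}^{-1}h_j^2 h_{j+1}^{-1}.
\]
It therefore suffices to construct a weight-preserving bijection between the collections $P$ with $\partial_{in}P=\partial_{out}P=S$ and the target-closed subsets $E$ of $[k,\ell]$.

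To build the bijection, first analyze the structure of such a collection $P$. Reading the factorization \eqref{eq:coxfactorization} from the incoming to the outgoing boundary, a path meets in turn the bridges of the factors $x_{\sigma_i}(t_{\sigma_i})$ (each running from level $i$ down to level $i-1$), then the Cartan levels (level $i$ carrying weight $h_{i+1}h_i^{-1}$), then the bridges of the factors $x_{\ol{\sigma_i}}(t_{\ol{\sigma_i}})$ (each running from level $i-1$ up to level $i$). Since each path must return to the level set $S$ and the paths are pairwise disjoint, the only available freedom is that at a vertex $j$ a path running along level $j$ may descend to level $j-1$ across the $t_j$-bridge and climb back across the $t_{\ol{\jmath}}$-bridge; call this a \emph{detour at $j$}, and let $E(P)\subseteq\ZZ_n$ be the set of vertices where one occurs. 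I claim (i) $P$ is determined by $E(P)$, and (ii) $E(P)$ ranges exactly over $\cE_{[k,\ell]}$. For (i): using disjointness and the fact that $(\sigma_1,\dots,\sigma_n)$ is sink-adapted, one checks that no path can leave the level set $S$ except to perform a detour, and that once $E(P)$ is fixed the routing of every path is forced; Lemma~\ref{lem:phi} is the local incarnation of this rigidity. For (ii): the local analysis of Figure~\ref{fig:quivernetwork} shows that a detour at $j$ is compatible with $S$ precisely when $j\in[k,\ell]$, and that when $j\to j+1$ in $Q$ with $j+1\in[k,\ell]$ a detour at $j$ forces a detour at $j+1$ — exactly the condition for $E(P)$ to be target-closed in $[k,\ell]$. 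The interaction with the boundary vertices $k-1,k,\ell,\ell+1$ is governed by the two cases ($i_1<o_1$ versus $o_1<i_1$) in the description of $S_{[k,\ell]}$ in Lemma~\ref{lem:regulargvectors}; in each case one verifies that the extreme intervals comprising $S_{[k,\ell]}$ are arranged so that no detour is forced or forbidden inconsistently at a vertex outside $[k,\ell]$, so that $E(P)$ sweeps out all of $\cE_{[k,\ell]}$.

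Finally, the weights match on the nose. A straight path along level $j$ contributes the Cartan factor $h_{j+1}h_j^{-1}$, so the detour-free collection has weight $\prod_{j\in S}h_{j+1}h_j^{-1} = x^{\grep(\Qrep_{[k,\ell]})}$; and introducing a detour at $j$ replaces the level-$j$ Cartan passage by a level-$(j-1)$ one while inserting the bridge weights $t_j$ and $t_{\ol{\jmath}}$, multiplying the total weight by $t_j t_{\ol{\jmath}}\cdot\frac{h_j h_{j-1}^{-1}}{h_{j+1}h_j^{-1}} = \hat y_j$. Hence $wt(P) = x^{\grep(\Qrep_{[k,\ell]})}\prod_{j\in E(P)}\hat y_j$, precisely the term indexed by $E(P)\in\cE_{[k,\ell]}$ in the cluster character \eqref{eq:regular cluster characters}. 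Summing over $P$ and combining with (i), (ii), Proposition~\ref{prop:minorsfrompaths}, and Lemma~\ref{lem:regulargvectors} yields $\Delta_{\grep(\Qrep_{[k,\ell]})} = x_{\Qrep_{[k,\ell]}}$ on $\widetilde{LSL}_n^{c,c^{-1}}$.

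The main obstacle is the bijection of the second paragraph — in particular the rigidity statement (i), that a disjoint path collection with prescribed boundary $S$ is completely determined by its detour set, and the boundary analysis in (ii) showing the realizable detour sets are exactly the target-closed subsets of $[k,\ell]$. Once this is pinned down, the weight bookkeeping is routine.
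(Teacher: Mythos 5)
Your strategy is identical to the paper's: express $\Delta_{\grep(\Qrep_{[k,\ell]})}$ as a path sum via Proposition~\ref{prop:minorsfrompaths} and $x_{\Qrep_{[k,\ell]}}$ as a sum over $\cE_{[k,\ell]}$ via Proposition~\ref{prop:regular coindices}, then match terms via what the paper calls the bridge set $\beta(P)$ and you call the detour set $E(P)$; your weight bookkeeping agrees with the paper's claim (b). However, your claims (i) and (ii) are exactly the paper's claims (a) and (c), and you assert them without proof -- as you yourself flag. That is where the substance of the argument lies, and the paper spends essentially all of its proof establishing them: one shows $\beta(P)\subseteq[k,\ell]$ by applying Lemma~\ref{lem:phi}(2) to the intervals $[k^-,k-1]$ and $[\ell+1,\ell^+]$ just outside $[k,\ell]$; one proves target-closedness using both parts of Lemma~\ref{lem:phi}, whereas you only invoke the $j\to j+1$ direction (an arrow $j+1\to j$ with $j\in[k,\ell]$ imposes the other constraint, via Lemma~\ref{lem:phi}(1)); and for surjectivity the paper builds the inverse $\pi:\cE_{[k,\ell]}\to\cP_{[k,\ell]}$ explicitly, blockwise over the intervals $[i_{r-1},o_r-1]$ and $[o_r,i_r-1]$, simultaneously verifying the weight formula. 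That construction is subtler than a sequence of independent local dips, since diversions on consecutive levels must shift downward together to remain pairwise disjoint; and your phrase ``a detour at $j$ is compatible with $S$ precisely when $j\in[k,\ell]$'' conflates a property of individual vertices with the realizability constraint on the whole set (target-closedness). So the plan is right but the combinatorial core is missing.
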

\begin{proof}
  Recall from Lemma~\ref{lem:regulargvectors} the subset $S_{[k,\ell]} \subset \ZZ_n$ such that $\grep(\Qrep_{[k,\ell]}) = \omega_{S_{[k,\ell]}}$.
  Let $\cP_{[k,\ell]}$ denote the set of pairwise disjoint collections of paths in $\cN^{c,c^{-1}}$ with $\partial_{in}P = \partial_{out}P = S_{[k,\ell]}$.
  We claim that for $P \in \cP_{[k,\ell]}$:
  \begin{enumerate}[\quad\upshape (a)]
    \item
      the bridge set $\beta(P)$ is a target-closed subset of $[k,\ell]$,
    \item
      we have
      \[
        wt(P) = x^{\grep(\Qrep_{[k,\ell]})} \prod_{j \in \beta(P)} \hat{y}_j,
      \]
    \item
      the assignment $P \mapsto \beta(P)$ defines a bijection between $\cP_{[k,\ell]}$ and the set $\cE_{[k,\ell]}$ of target-closed subsets of $[k,\ell]$.
  \end{enumerate}
  The main claim follows from these given Propositions~\ref{prop:regular coindices} and~\ref{prop:minorsfrompaths}.

  As in Proposition~\ref{prop:regular coindices}, we let $i_1 < \cdots < i_m$ and $o_1 < \cdots < o_m$ denote the labels of the sinks and sources of $Q$ contained in $[k,\ell]$.
  We consider the case where the first source in $[k,\ell]$ comes before the first sink (i.e. $o_1 < i_1$ in $[k,\ell]$), the opposite case following by a symmetric argument.
  Recall that in this case
  \[
    S_{[k,\ell]} = [k,o_1-1] \cup [i_1,o_2-1] \cup \cdots \cup [i_m,\ell].
  \]

  Let $P \in \cP_{[k,\ell]}$.
  Since $o_1<i_1$, we have $j+1\to j$ in $Q$ for each $j\in[k^-,k-1]$, where $k^-$ is the nearest sink less that $k$ (i.e. no element of $[k^-+1,k]$ is a sink).
  But $[k^-,k-1]\cap\partial_{in}P=\emptyset$ and thus part (2) of Lemma~\ref{lem:phi} implies $j\notin\beta(P)$ for any $j\in[k^-,k-1]$.
  Similarly, $o_m<i_m$ gives $j\to j-1$ in $Q$ for each $j\in[\ell+1,\ell^+]$, where $\ell^+$ is the nearest source after $\ell$.
  Applying part (2) of Lemma~\ref{lem:phi} inductively beginning with the source $j=\ell^+$, we see that $j\notin\beta(P)$ for any $j\in[\ell+1,\ell^+]$.
  Noting that $[\ell^+,k^-]\cap\partial_{in}P=\emptyset$ and then arguing as above we conclude that $\beta(P)\subset[k,\ell]$.

  Write $i_0=k$ and $i_{m+1}=o_{m+1}=\ell+1$.
  For $1\le r\le m+1$ and $j\in[i_{r-1},o_r-1]\subset\partial_{in}P$, we have $j+1\to j$ in $Q$ and part (1) of Lemma~\ref{lem:phi} says $j+1\in\beta(P)$ implies $j\in\beta(P)$.
  For $1\le r\le m$ and $j\in[o_r,i_r-1]$, we have $j\notin\partial_{in}P$ and $j\to j+1$ in $Q$ so that part (2) of Lemma~\ref{lem:phi} says $j\in\beta(P)$ implies $j+1\in\beta(P)$.
  Putting these observations together, we conclude that $\beta(P)\subset[k,\ell]$ is target-closed and (a) is established.

  Now suppose $E\in\cE_{[k,\ell]}$ is a target-closed subset of $[k,\ell]$.
  Write $P_0\in\cP_{[k,\ell]}$ for the trivial collection of pairwise disjoint paths in which the path beginning on level $j\in S_{[k,\ell]}=\partial_{in}P_0$ stays on level $j$ without traversing any bridges.
  Note that
  \[
    wt(P_0)=\prod_{i \in S_{[k,\ell]}}h_{i+1} h_i^{-1}=x^{\grep(\Qrep_{[k,\ell]})}.
  \]
  We will show how to inductively modify $P_0$ to obtain a unique collection of paths $\pi(E)\in\cP_{[k,\ell]}$ with $\beta(\pi(E))=E$, keeping track of how the weight of the collection changes along the way.
  That is, for $1\le r\le m+1$ we build from $P_{r-1}$ a new collection $P_r \in \cP_{[k,\ell]}$ such that
  \[
    wt(P_r) = x^{\grep(\Qrep_{[k,\ell]})} \prod_{j \in E \cap [k,i_r]} \hat{y}_j.
  \]
  We keep the convention that $i_0=k$ and $i_{m+1}=o_{m+1}=\ell+1$, and define $P_r$ in two steps:
  \begin{enumerate}
    \item
      First define an intermediate network $P'_r \in \cP_{[k,\ell]}$.
      If $i_{r-1} \notin E$, let $P'_r=P_{r-1}$.
      Otherwise let $o'_r$ be the largest index in $[i_{r-1},o_r-1]$ such that $[i_{r-1},o'_r]\subset E$.
      We obtain $P'_r$ from $P_{r-1}$ by, for each $j\in[i_{r-1},o'_r]$, diverting the path that stays on level $j$ so that it traverses the bridges of weights $t_j$ and $t_{\ol{\jmath}}$.
      By inspection of Figure~\ref{fig:quivernetwork} and their counterparts with $t_{\ol{\jmath}}$, the resulting collection $P'_r$ will remain pairwise disjoint.

      If $P'_r \neq P_{r-1}$, then using Equation~\eqref{eq:typeAyhat} we have
      \[
        wt(P'_r)=wt(P_{r-1})\cdot h_{i_{r-1}-1}^{-1}h_{i_{r-1}}h_{o'_r}h_{o'_r+1}^{-1}\prod_{j\in[i_{r-1},o'_r]}(t_jt_{\ol{\jmath}}) = wt(P_{r-1})\prod_{j \in [i_{r-1},o'_r]} \hat{y}_j.
      \]
      In either case, using our inductive assumption on $wt(P_{r-1})$ and the fact that $E$ is target-closed we have
      \[
        wt(P'_r) = x^{\grep(\Qrep_{[k,\ell]})} \prod_{j \in E \cap [k,o_{r}-1]} \hat{y}_j.
      \]

    \item
      If $i_r \notin E$, let $P_r = P'_r$.
      Otherwise let $o''_r$ be the smallest index in $[o_r,i_r]$ such that $[o''_r,i_r]\subset E$.
      We obtain $P_r$ from $P'_r$ by diverting the path that stays on level $i_r$ so that it traverses every bridge of weight $t_j$ and $t_{\ol{\jmath}}$ for $j\in[o''_r,i_r]$.
      By inspection of Figure~\ref{fig:quivernetwork} and their counterparts with $t_{\ol{\jmath}}$, the resulting collection $P_r$ will remain pairwise disjoint since $[o''_r,i_r]\cap\partial_{in}P'_r=\{i_r\}$ and when $o''_r=o_r$ (i.e. $o_r\in E$) we have $o_r-1\in\beta(P'_r)$.

      If $P_r \neq P'_r$, we have
      \[
        wt(P_r)=wt(P'_r)\cdot h_{o''_r-1}^{-1}h_{o''_r}h_{i_r}h_{i_r+1}^{-1}\prod_{j\in[o''_r,i_r]}(t_jt_{\ol{\jmath}})=wt(P'_r)\prod_{j \in [o''_r,i_r]} \hat{y}_j.
      \]
      In either case, again by induction and the fact that $E$ is target-closed we have
      \[
        wt(P_r) = x^{\grep(\Qrep_{[k,\ell]})} \prod_{j \in E \cap [k,i_r]} \hat{y}_j.
      \]
  \end{enumerate}
  By induction, $\pi(E)=P_{m+1} \in \cP_{[k,\ell]}$ has $\beta(\pi(E))=E$ and
  \[
    wt(\pi(E))=x^{\grep(\Qrep_{[k,\ell]})}\prod_{j\in E}\hat y_j,
  \]
  as desired in (b).
  Finally, it is straightforward to see that if $\beta(P) = \beta(P')$ for some $P, P' \in \cP_{[k,\ell]}$, we must have $P = P'$.
  In particular, $\pi(\beta(P))=P$ for any $P\in\cP_{[k,\ell]}$, which shows (c) and completes the proof.
\end{proof}
\begin{example}
  We continue with Example~\ref{ex:minor_computation}, which concerns type $A_2^{(1)}$ with Coxeter element $c=s_2s_1s_3$.
  The regular cluster variable with $\bfg$-vector $\omega_1-\omega_2$ is obtained from the initial cluster of $\cA_{\Bdp}$ by mutating first in direction 3 then in direction 2, and the result is
  \[
    x_{\omega_1-\omega_2}
    =
    \frac{x_1 x_3 + \cvar_2 \cvar_{\ol{2}} + x_1 x_2 \cvar_2 \cvar_3 \cvar_{\ol{2}} \cvar_{\ol{3}}}{x_2 x_3}.
  \]
  If we substitute the values computed in Lemma~\ref{lemma:coefficients_values} into this expression we recover the value of the generalized minor $\Delta_{\omega_1-\omega_2}$ on a generic element $g$ of $\widetilde{LSL}_3^{c,c^{-1}}$ as computed in Example~\ref{ex:minor_computation}.
\end{example}
\subsection{Level zero minors in other affine types}
\label{sec:othertypes}

We anticipate that the realization of regular cluster variables as level zero minors is not specific to type $A_{n-1}^{\!(1)}$.
Indeed, that regular $\bfg$-vectors in any affine type lie on the boundary of the Tits cone and its negative is strongly suggested by both the combinatorial construction of affine generalized associahedra \cite{RS16} and by Cambrian frameworks \cite{RS15}.
With these considerations in hand we pose the following specific form of Conjecture~\ref{conj:mainconjecture} in affine type:

\begin{conjecture}
  \label{conj:affine}
  Let $B$ be an acyclic skew-symmetrizable matrix whose Cartan companion $A$ is of affine type.
  Let $x_{i;t} \in \cA_{\Bdp}$ be any regular cluster variable and $\gv_{i;t} \in P$ its $\bfg$-vector.
  Then under $\cA_{\Bdp} \cong \kk[G^{c,c^{-1}}]$, the cluster variable $x_{i;t}$ is the restriction of the level zero minor $\Delta_{\gv_{i;t}}$.
\end{conjecture}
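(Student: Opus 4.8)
The plan is to attack Conjecture~\ref{conj:affine} in two stages: a soft reduction valid uniformly in affine type, followed by a harder identification step, for which I see two possible routes. The reduction goes as follows. In affine type the regular cluster variables of $\cA_{\Bdp}$ are finite in number and explicitly computable; by the description of affine generalized associahedra in \cite{RS16}, the $\bfg$-vector $\lambda := \gv_{i;t}$ of any regular cluster variable lies in the complement of the Tits cone and its negative, hence in $P^\circ\oplus\ZZ\delta$. Writing $\lambda = \lambda^\circ + r\delta$, the candidate minor $\Delta_\lambda$ is the level zero minor of the explicit representation $V(\lambda^\circ)[\loopvar^{\pm1}]$, with $V(\lambda^\circ)$ a finite-dimensional representation of the underlying semisimple group $G^\circ$; part of the content of the conjecture is that this is the correct normalization, rather than one of the twisted level zero representations with nontrivial parameters. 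After this reduction it suffices to establish a finite list of identities in $\kk[G^{c,c^{-1}}]$, one per regular $\bfg$-vector.

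The first route — essentially the one carried out in Section~\ref{sec:othertypes} for a finite list of types — is brute-force comparison. Using the factorization \eqref{eq:uvfactorization} one restricts $\Delta_\lambda$ to $G^{c,c^{-1}}$ by letting each one-parameter subgroup act on a weight basis of $V(\lambda^\circ)[\loopvar^{\pm1}]$ and reading off the diagonal matrix coefficient labelled by $\lambda$; in parallel one computes $x_{i;t}$ by iterated mutation from the initial cluster of Theorem~\ref{thm:coordring}, substituting the parameter values of Lemma~\ref{lemma:coefficients_values}. When the weight multiplicities of $V(\lambda^\circ)$ are small this is tractable and the two Laurent polynomials match on inspection. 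The obstruction to pushing it further is that the character of $V(\lambda^\circ)$ does not in general determine the restriction of $\Delta_\lambda$: one must track the action of the unipotent factors, and the bookkeeping explodes.

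The second, structural route would generalize the type-$A_{n-1}^{\!(1)}$ argument of Section~\ref{sec:networks}. I would (a) construct a weighted directed network on a surface encoding a generic element of $G^{c,c^{-1}}$ for an arbitrary untwisted affine $G$, extending \cite{GSV12,FM14} from the case $G = \widetilde{LSL}_n$; (b) express the level zero minors $\Delta_{\omega_S}$ attached to fundamental representations as weighted sums over pairwise disjoint path collections by a generalized Lindstr\"om--Gessel--Viennot lemma, mirroring Proposition~\ref{prop:minorsfrompaths}; (c) for each rigid regular representation of the affine-type quiver or species -- whose placement in the exceptional tubes is explicitly known -- build a weight-preserving bijection between its subrepresentations (equivalently, the target-closed subsets of its coefficient quiver) and the relevant path collections, as in Theorem~\ref{th:cluster character equals minor}; and (d) conclude that the network computation reproduces the Caldero--Chapoton formula. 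A variant would try to bootstrap the non-simply-laced affine cases from type $A$ via diagram folding where available.

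The hard part, and the reason the conjecture remains open, is that level zero minors are not semi-invariants of $B_+$, $B_-$ or their conjugates, so the generalized determinantal identities of Proposition~\ref{prop:fundid} -- which hold globally on $G$ and restrict to the exchange relations in the preprojective and postinjective cases -- have no known analog here. Concretely, the network models of \cite{GSV12,FM14} exist only for $\widetilde{LSL}_n$, so step (a) is genuinely new work for types $D_n^{(1)}$, $E_n^{(1)}$ and the twisted affine types; and even granting them, step (c) is delicate because the relevant tubes are non-homogeneous and some non-rigid regular modules carry the primitive imaginary root. More fundamentally, a uniform proof would seem to require a representation-theoretic interpretation of the exchange relations linking regular cluster variables to the rest of the exchange graph -- precisely the gap flagged after Theorem~\ref{thm:maintheorem}. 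I would look for the missing identities inside the $\widehat{G}$-version of \eqref{eq:fundid}, which involves the characters of $\widehat{G}/G$ and so might naturally produce level zero minors, or among the Pl\"ucker-type relations satisfied by coordinates on the thick affine flag variety; isolating the right family of such identities is, I expect, the crux of the problem.
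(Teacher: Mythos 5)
The statement you were asked to prove is a conjecture, and the paper does not prove it; it only verifies Conjecture~\ref{conj:affine} by direct computation in types $B_3^{(1)}$, $C_2^{(1)}$, $D_4^{(1)}$, and $G_2^{(1)}$ (Proposition~\ref{prop:conjholds}), with the type $A_{n-1}^{(1)}$ case handled separately by the network argument of Section~\ref{sec:networks} (Theorem~\ref{th:cluster character equals minor}). You correctly recognize this and, appropriately, do not claim to close the conjecture. Your analysis is accurate and tracks the paper closely: the reduction using \cite{RS16} to place regular $\bfg$-vectors in $\widehat{P}\smallsetminus(X\cup -X)$, hence in $P^\circ\oplus\ZZ\delta$, is exactly the observation made in the introduction; your ``route 1'' is the character-based brute-force computation of Section~\ref{sec:othertypes} (and your observation that this only works when one can track the unipotent action through a weight basis with small multiplicities is precisely why the paper stops at the types it lists); your ``route 2'' is a faithful outline of the steps of Section~\ref{sec:networks}, including the role of Proposition~\ref{prop:minorsfrompaths} and the target-closed-subset bijection; and your identification of the central obstacle — that level zero minors are not $B_\pm$-semi-invariants, so Proposition~\ref{prop:fundid} gives no usable relations — reproduces the paper's own stated reason that (2) of Theorem~\ref{thm:maintheorem} requires different methods than (1). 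Your further remark that the conjecture implicitly chooses among the level zero representations of \cite{CP86} (with trivial twisting parameters) is a correct and somewhat subtle point the paper only touches on in a remark. The one place you go beyond the paper is the suggestion to look for the missing identities inside the $\widehat{G}$-form of \eqref{eq:fundid} or among Pl\"ucker relations on the thick affine flag variety; this is reasonable speculation but is not pursued in the paper, and you rightly present it as such rather than as a step of a proof. In summary: there is no proof to compare against, and your proposal is a correct account of the paper's partial verification, the methods available, and the genuine obstructions.
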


While the combinatorics we used to prove Conjecture~\ref{conj:affine} in type $A_{n-1}^{\!(1)}$ is not available in general, in some types it is possible to compute the relevant minors by hand using only the characters of the representations involved (e.g. when they are spaces of Laurent polynomials valued in a representation all of whose weight spaces are one dimensional).
The results are consistent with Conjecture~\ref{conj:affine}:

\begin{proposition}
  \label{prop:conjholds}
  Conjecture~\ref{conj:affine} holds when the Cartan companion of $B$ is of type $B_3^{(1)}$, $C_2^{(1)}$, $D_4^{(1)}$, or $G_2^{(1)}$.
\end{proposition}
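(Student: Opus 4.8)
The plan is to verify Conjecture~\ref{conj:affine} by direct computation in each of the four listed types, exploiting the fact that in each case there are only finitely many regular cluster variables and the level zero representations $V(\omega^\circ)$ with $\omega^\circ \in P^\circ$ all have one-dimensional weight spaces. First I would enumerate, for each type $B_3^{(1)}$, $C_2^{(1)}$, $D_4^{(1)}$, $G_2^{(1)}$, all acyclic orientations of the corresponding affine Dynkin diagram (equivalently, all Coxeter elements $c$), using that non-isomorphic orientations are classified up to the symmetries of the diagram. For each orientation, the regular cluster variables are finite in number and can be explicitly computed by iterated mutation starting from the initial cluster of $\cA_{\Bdp}$; their $\bfg$-vectors $\gv_{i;t}$ can simultaneously be recorded. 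By the result of \cite{RS16} cited in the introduction, each such $\gv_{i;t}$ lies in the complement of the Tits cone and its negative, hence in $P^\circ \oplus \ZZ\delta$, so we may write $\gv_{i;t} = \omega^\circ + r\delta$ and the corresponding minor is $\Delta_{\gv_{i;t}} = \Delta_{(V(\omega^\circ)[\loopvar^{\pm1}],\,\gv_{i;t})}$.

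Next I would compute the restriction of each such minor to $G^{c,c^{-1}}$. Using the generic factorization \eqref{eq:uvfactorization} of an element of $G^{c,c^{-1}}$ into the $2n+1$ one-parameter factors, together with the values of the initial cluster variables from Theorem~\ref{thm:coordring} and the auxiliary evaluations in Lemma~\ref{lemma:coefficients_values}, we obtain all the $x_i$, $z_i$, $z_{\ol\imath}$ as explicit Laurent monomials in the factorization parameters $t_j, t_{\ol\jmath}, h^{\omega_i}$. Substituting these into the rational expression for a regular cluster variable $x_{i;t}$ produced by the mutation computation gives its value as an explicit Laurent polynomial. On the other side, since all weight spaces of $V(\omega^\circ)$ are one-dimensional, the matrix entry $\Delta_{(V(\omega^\circ)[\loopvar^{\pm1}],\,\omega^\circ + r\delta)}$ is computed by tracking how each one-parameter factor $x_j(t_j)$, $x_{\ol\jmath}(t_{\ol\jmath})$, $h$ acts on the single basis vector of the relevant weight space — a finite sum over ``paths'' through the weight diagram of $V(\omega^\circ)$ analogous to (but smaller than) the network computation of Proposition~\ref{prop:minorsfrompaths}. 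Matching the two Laurent polynomials term by term establishes the identity; the remaining finitely many $\bfg$-vectors not of the form of a regular cluster variable's, if any, need not be checked.

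I expect the main obstacle to be purely organizational rather than conceptual: there are several orientations per diagram and several regular cluster variables per orientation, so the verification is a moderately large but entirely finite bookkeeping task. The one genuinely nontrivial point is the weight-diagram computation of $\Delta_{(V(\omega^\circ),\,\mu)}$ for the non-minuscule representations that occur (e.g.\ the $7$-dimensional representation of $G_2$, or the adjoint-type representations of $B_3$ and $D_4$): one must correctly identify which weight spaces are one-dimensional, and the precise coefficients with which the Chevalley generators act along the relevant chains in the weight diagram. These can be extracted from the standard description of the representations via their crystal/weight structure, and the resulting sums are short enough to carry out by hand. Once these representation-theoretic inputs are fixed, comparison with the cluster-algebraic side is a routine monomial identification, and the identification of $\bfg$-vectors with minor weights then follows as in the statement of Theorem~\ref{thm:maintheorem}.
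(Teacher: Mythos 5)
Your proposal takes essentially the same approach as the paper's: a finite direct verification in which the regular cluster variables are produced by iterated mutation and compared against the level zero minors evaluated on the generic factorization of $G^{c,c^{-1}}$; the paper carries out one sample computation (a $C_2^{(1)}$ variable) by tracing the action of the one-parameter factors through the weight diagram using the extremality of the weight, and relegates the remaining ``several dozen such calculations'' to a machine check. One small point of care: the tractability of these four types does not come from a blanket claim that every $V(\omega^\circ)$ with $\omega^\circ \in P^\circ$ has all one-dimensional weight spaces (indeed the adjoint-type representations you mention do not), but rather from the fact that the weights actually visited along the computation are Weyl conjugates of the extremal weight $\gv_{i;t}$ and hence one-dimensional, together with the listed types being precisely those, per the paper's introduction, for which this information plus the character of $V(\omega^\circ)$ suffices to pin down the relevant matrix coefficients.
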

\begin{proof}
  We illustrate the needed computation in the case of a sample regular cluster variable in type $C_{2}^{(1)}$.
  The full proposition amounts to an assertion about several dozen such calculations, which can be efficiently checked by a computer.

  Consider the exchange matrix and Cartan companion
  \[
    B =
    \begin{bmatrix}
      0 & 1 & 0 \\
      -2 & 0 & 2 \\
      0 & -1 & 0
    \end{bmatrix},
    \quad
    A =
    \begin{bmatrix}
      2 & -1 & 0 \\
      -2 & 2 & -2 \\
      0 & -1 & 2
    \end{bmatrix}.
  \]
  This corresponds to the Coxeter element $s_1 s_2 s_3$, where the $C_2^{(1)}$ Dynkin diagram is labelled as
  \vspace{3mm}
  \[
    \begin{tikzpicture}[scale=0.5]
      \draw (0, 0.1 cm) -- +(2 cm,0);
      \draw (0, -0.1 cm) -- +(2 cm,0);
      \draw[shift={(1.2, 0)}, rotate=0] (135 : 0.45cm) -- (0,0) -- (-135 : 0.45cm);
      {
        \pgftransformxshift{2 cm}
        \draw (0 cm,0) -- (0 cm,0);
        \draw (0 cm, 0.1 cm) -- +(2 cm,0);
        \draw (0 cm, -0.1 cm) -- +(2 cm,0);
        \draw[shift={(0.8, 0)}, rotate=180] (135 : 0.45cm) -- (0,0) -- (-135 : 0.45cm);
        \draw[fill=black] (0 cm, 0 cm) circle (.25cm) node[below=4pt]{$2$};
        \draw[fill=black] (2 cm, 0 cm) circle (.25cm) node[below=4pt]{$3$};
      }
      \draw[fill=black] (0 cm, 0 cm) circle (.25cm) node[below=4pt]{$1$};
    \end{tikzpicture}
  \]
  If we mutate in directions 1, 3, and 2 in that order we obtain the regular cluster variable
  \[
    x_{\omega_2 - \omega_1} = \frac{x_1 x_2^2 z_1 z_2 z_3 z_{\ol{1}} z_{\ol{2}} z_{\ol{3}} + x_1 z_1 z_2 z_{\ol{1}} z_{\ol{2}} + x_2^2 x_3 +  x_3 z_1 z_{\ol{1}}}{x_1 x_2 x_3}
  \]
  with $\bfg$-vector is $\omega_2 - \omega_1$.

  Let us evaluate the level zero minor $\Delta_{\omega_2 - \omega_1}$ on a generic element
  \[
    g = x_{\ol{1}}(t_{\ol{1}})x_{\ol{2}}(t_{\ol{2}})x_{\ol{3}}(t_{\ol{3}})hx_3(t_3)x_2(t_2)x_1(t_1) \in \widetilde{LSp}_4^{c,c^{-1}}.
  \]
  To do this, fix a vector $v$ of weight $\omega_2 - \omega_1$  in $V(\omega_2 - \omega_1) \cong \kk^4[\loopvar^{\pm 1}]$.
  Note that $v$ is extremal in the sense that it is either a highest-weight or lowest-weight vector for each (not necessarily simple) coroot subgroup of $\widetilde{LSp}_4$; the same property then holds automatically for any vector whose weight is conjugate to $\omega_2 - \omega_1$.

  Write $e_i$ and $f_i$ for the standard Chevalley generators of $\widetilde{L\mathfrak{sp}}_4$.
  Since $v$ sits in an $\alpha_1$-root string of length 2, we have
  \[
    x_1(t_1)v = v + t_1e_1v.
  \]
  Similarly, since $e_1v$ has weight $s_1(\omega_2 - \omega_1) = \omega_1 - \omega_2$, it sits in an $\alpha_2$-root string of length 2, and we have
  \[
    x_2(t_2)x_1(t_1)v = v + t_1e_1v + t_1t_2e_2e_1v.
  \]
  Finally, $e_2e_1v$ has weight $s_2s_1(\omega_2 - \omega_1) = \omega_2 - \omega_3$, and we have
  \[
    x_3(t_3)x_2(t_2)x_1(t_1)v = v + t_1e_1v + t_1t_2e_2e_1v + t_1t_2t_3e_3e_2e_1v.
  \]

  Since we know the weights of each component of $x_3(t_3)x_2(t_2)x_1(t_1)v$, we can now compute that
  \[
    hx_3(t_3)x_2(t_2)x_1(t_1)v = h^{\omega_2-\omega_1}v + h^{\omega_1 - \omega_2}t_1e_1v + h^{\omega_2-\omega_3}t_1t_2e_2e_1v + h^{\omega_3-\omega_2}t_1t_2t_3e_3e_2e_1v.
  \]
  From here we can see that the component of $gv$ of weight $\omega_2 - \omega_1$ is
  \[
    (h^{\omega_2-\omega_1} + h^{\omega_1 - \omega_2}t_1t_{\ol{1}} + h^{\omega_2-\omega_3}t_1t_{\ol{1}}t_2t_{\ol{2}} + h^{\omega_3-\omega_2}t_1t_{\ol{1}}t_2t_{\ol{2}}t_3t_{\ol{3}})v.
  \]
  By definition, this is equal to $\Delta_{\omega_2-\omega_1}(g)v$.
  On the other hand, it is exactly what we obtain by using Equation~\eqref{eq:frozen_values} to rewrite the earlier expression for $x_{\omega_2 - \omega_1}$.
\end{proof}

\subsection{The homogeneous element}
We conclude with an extension of Theorem~\ref{th:cluster character equals minor} motivated by the theory of canonical bases.
Cluster monomials form a partial basis of a cluster algebra, and there are various meaningful (and in general different) ways of completing them to a full basis.
One approach in line with the considerations of this paper is to form a basis consisting of cluster characters of potentially non-rigid representations.
By taking the cluster character of a generic representation with each possible $\bfg$-vector, one obtains the generic basis of \cite{Dup12} (which directly generalizes the dual semicanonical basis of Lusztig \cite{Lus00}).

When $Q$ is an acyclic orientation of an $n$-cycle as in Section~\ref{sec:rigidregular}, the simplest example of a non-rigid representation is the representation $\Qrep_\eta$ at the base of a homogeneous tube $\mathcal{T}_\eta$ for $\eta \in \kk^*$.
That is, $\Qrep_\eta$ consists of a one-dimensional vector space at each vertex of $Q$, with arrows acting by isomorphisms such that the monodromy around $Q$ is equal to $\eta$.
The dimension vector of $\Qrep_\eta$ is the primitive imaginary root $\delta$, and if $i_1<\cdots<i_m$ and $o_1<\cdots < o_m$ denote the sinks and sources of $Q$, its $\bfg$-vector is
\[
  \grep(\Qrep_\eta) = \sum_{j=1}^m \omega_{o_j} - \omega_{i_j}.
\]
The cluster character of $\Qrep_\eta$ is independent of $\eta$, and we refer to the resulting element of the generic basis as the homogeneous element.

\begin{theorem}
  \label{thm:homogeneous}
  In $\cA_{\Qdp} \cong \kk[\widetilde{LSL}_n^{c,c^{-1}}]$ the homogeneous element $x_{\Qrep_\eta}$ is the restriction of the level zero minor $\Delta_{\grep(\Qrep_\eta)}$.
\end{theorem}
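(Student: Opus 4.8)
The plan is to repeat the network argument of Theorem~\ref{th:cluster character equals minor}, now for the sincere but still ``thin'' representation $\Qrep_\eta$. First I would record that $\grep(\Qrep_\eta)=\omega_S$, where $S=\bigcup_{j=1}^m[i_j,o_j-1]\subset\ZZ_n$ is the disjoint union of the arcs running from each sink to just before the following source: this follows from the formula $\grep(\Qrep_\eta)=\sum_j(\omega_{o_j}-\omega_{i_j})$ stated above by the same telescoping as in Lemma~\ref{lem:regulargvectors}. Next, since every $(\Qrep_\eta)_i$ is one-dimensional and every arrow of $Q$ acts invertibly, a subrepresentation of $\Qrep_\eta$ is exactly a target-closed subset of $\ZZ_n$, independently of $\eta$; in particular each $\Gr_e\Qrep_\eta$ is a single reduced point when $e$ is the dimension vector of such a subset and is empty otherwise. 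Hence
\[
  x_{\Qrep_\eta}=x^{\grep(\Qrep_\eta)}\sum_{E}\prod_{j\in E}\hat{y}_j,
\]
the sum ranging over all target-closed $E\subseteq\ZZ_n$.

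On the group side, Proposition~\ref{prop:minorsfrompaths} gives $\Delta_{\grep(\Qrep_\eta)}(g)=\Delta_{\omega_S}(g)=\sum_P wt(P)$, the sum over collections $P$ of pairwise disjoint paths in $\cN^{c,c^{-1}}$ with $\partial_{in}P=\partial_{out}P=S$. To each such $P$ I would attach its bridge set $\beta(P)\subseteq\ZZ_n$ and prove that (a) $\beta(P)$ is target-closed, (b) $wt(P)=x^{\grep(\Qrep_\eta)}\prod_{j\in\beta(P)}\hat{y}_j$, and (c) $P\mapsto\beta(P)$ is a bijection onto the set of target-closed subsets of $\ZZ_n$; the displayed formula for $x_{\Qrep_\eta}$ then matches the sum of path weights term by term and the theorem follows. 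Here $\ZZ_n$ decomposes into alternating arcs: the $S$-arcs $[i_j,o_j-1]$, on which all arrows point toward the sink $i_j$, and the complementary arcs $[o_j,i_{j+1}-1]$, on which all arrows point toward the sink $i_{j+1}$. Part (a) is then immediate from Lemma~\ref{lem:phi}: membership in $\beta(P)$ propagates toward the nearest sink inside an $S$-arc and away from each source inside a complementary arc, which is precisely the target-closure condition. For (b) and the surjectivity in (c) I would modify the trivial collection $P_0$ (each path stays on its level, so $\beta(P_0)=\emptyset$ and $wt(P_0)=x^{\grep(\Qrep_\eta)}$) arc by arc around the cycle, diverting paths so as to traverse precisely the bridges in a prescribed target-closed $E$ and using $\hat{y}_j=t_j t_{\ol{\jmath}} h_{j-1}^{-1}h_j^2 h_{j+1}^{-1}$ to track the weight; injectivity of $\beta$ is clear once the construction is reversible.

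The step I expect to be the main obstacle is this cyclic version of the modification procedure. In Theorem~\ref{th:cluster character equals minor} the proper subset $[k,\ell]\subsetneq\ZZ_n$ supplied boundary arcs $[k^-,k-1]$ and $[\ell+1,\ell^+]$ that forced $\beta(P)$ to stay inside $[k,\ell]$, so the induction had a definite beginning and end; here $S$ and its complement together exhaust $\ZZ_n$, so the induction must run all the way around the cylinder and close up without inconsistency. I will need to check that when $E$ contains a full source-to-sink arc --- equivalently, when $E$ contains a source --- the diverted path crosses the bridge past that source, as in the second step of the construction in Theorem~\ref{th:cluster character equals minor} where one uses $o_r-1\in\beta(P'_r)$, and, in the extreme case $E=\ZZ_n$, that diverting every path to use every bridge still yields a pairwise disjoint collection with $\partial_{in}P=\partial_{out}P=S$. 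Once the bookkeeping accommodates these wrap-around cases, the remaining verifications are routine adaptations of the regular case, and comparison with the cluster character completes the proof.
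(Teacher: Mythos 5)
Your proposal is correct and follows the same route as the paper, which simply records $\grep(\Qrep_\eta)=\omega_S$ for $S=\bigcup_j[i_j,o_j-1]$ and states that the argument of Theorem~\ref{th:cluster character equals minor} carries over verbatim. The wrap-around concern you flag is exactly the point to check and does work out: since each detour traverses a bridge of weight $t_j$ together with the matching bridge of weight $t_{\ol{\jmath}}$, every collection (including the one with $\beta(P)=\ZZ_n$) has zero net winding around the cylinder, so it contributes to the $\loopvar^0$ diagonal coefficient and hence to $\Delta_{\omega_S}$.
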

\begin{proof}
  Similar to Lemma~\ref{lem:regulargvectors}, we have $\grep(\Qrep_\eta) = \omega_S$, where
  \[
    S=[i_1,o_1-1] \cup [i_2,o_2-1] \cup \cdots \cup [i_m,o_m-1] \subset \ZZ_n.
  \]
  The proof now proceeds exactly like that of Theorem~\ref{th:cluster character equals minor}, and we omit the details.
\end{proof}

  Theorem~\ref{thm:homogeneous} has an analogue for an arbitrary symmetric Kac-Moody group of rank two.
  Let $Q$ be the $r$-Kronecker quiver: it has two vertices, and $r$ arrows from vertex 2 to vertex 1.
  Note that all non-initial cluster variables of $\cA_{\Qdp}$ are either preprojective or postinjective.
  However, it contains a homogeneous element $x_M$, where $M$ is any representation with $M_1$ and $M_2$ one-dimensional and at least one arrow acting by an isomorphism.
  Using the obvious injective resolution of $M$ and the fact that its only nontrivial proper submodule is $S_1$, we see that
  \[
    x_M = x_1^{-1}x_2^{r-1}(1 + \hat{y}_1 + \hat{y}_1 \hat{y}_2).
  \]
  One may easily check that such an element is contained in the generic basis of \cite{Dup12}, in the triangular basis of \cite{BZ14}, and the greedy/theta basis of \cite{LLZ14,GHKK14,CGMMRSW15}.

  We have $\cA_{\Qdp} \cong \kk[G^{c,c^{-1}}]$, where $G$ is the Kac-Moody group with Cartan matrix
  \[
    A = \begin{bmatrix} 2 & -r \\ -r & 2 \end{bmatrix}
  \]
  and $c = s_1 s_2$.
  Recall that for any weight $\lambda$ of $G$, there is a nonempty set of isomorphism classes of irreducible $G$-representations for which $\lambda$ is extremal \cite{Kas94}.

  \begin{theorem}
  \label{thm:rank2homogeneous}
  Let $V$ be a $G$-representation for which $\grep(\Qrep) = (r-1) \omega_2 - \omega_1$ is extremal. Then in $\cA_{\Qdp} \cong \kk[G^{c,c^{-1}}]$ the homogeneous element $x_{\Qrep}$ is the restriction of the  minor $\Delta_{(V,\,\grep(\Qrep))}$.
  \end{theorem}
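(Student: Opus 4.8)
The plan is to evaluate both sides on a generic element of $G^{c,c^{-1}}$; since such elements are dense, equality there gives equality of functions. Write $\lambda := \grep(\Qrep) = -\omega_1 + (r-1)\omega_2$, so $\langle\lambda,\alpha_1^\vee\rangle = -1$ and $\langle\lambda,\alpha_2^\vee\rangle = r-1 \geq 1$, and fix a nonzero $v \in V_\lambda$ (which is one-dimensional, as it must be for $\Delta_{(V,\lambda)}$ to be defined, and is for Kashiwara's extremal weight module attached to $\lambda$). First I would record the local $\mathfrak{sl}_2$-structure forced by extremality: the weights $\lambda$, $s_1\lambda = \lambda+\alpha_1 = \omega_1-\omega_2$, and $s_2 s_1\lambda = \lambda+\alpha_1+\alpha_2 = (1-r)\omega_1+\omega_2$ are $W$-conjugate, so $v$, $e_1 v$, $e_2 e_1 v$ are all extremal for every coroot $SL_2$; comparing $\alpha_i^\vee$-weights forces $f_1 v = e_1^2 v = e_2 v = 0$ and $f_2(e_1 v) = e_2^2(e_1 v) = 0$, while the $\mathfrak{sl}_2$-commutation relations give $f_1(e_1 v) = v$ and $f_2(e_2 e_1 v) = e_1 v$ with coefficient exactly $1$.

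Next I would compute $\Delta_{(V,\lambda)}$ on a generic $g = x_{\ol{1}}(t_{\ol{1}})\, x_{\ol{2}}(t_{\ol{2}})\, h\, x_2(t_2)\, x_1(t_1) \in G^{c,c^{-1}}$ by applying the factors to $v$ from the right and extracting the weight-$\lambda$ component, exactly as in the proof of Proposition~\ref{prop:conjholds}. One gets $x_1(t_1)v = v + t_1 e_1 v$, then $x_2(t_2)(v + t_1 e_1 v) = v + t_1 e_1 v + t_1 t_2\, e_2 e_1 v$, then $h$ rescales the three summands by $h^\lambda$, $h^{\lambda+\alpha_1}$, $h^{\lambda+\alpha_1+\alpha_2}$; applying $\exp(t_{\ol{2}} f_2)$ and then $\exp(t_{\ol{1}} f_1)$ and using the relations above, the weight-$\lambda$ component of $gv$ comes out to $\big(h^\lambda + t_1 t_{\ol{1}}\, h^{\lambda+\alpha_1} + t_1 t_2 t_{\ol{1}} t_{\ol{2}}\, h^{\lambda+\alpha_1+\alpha_2}\big)v$. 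The one point to watch is that the components of $gv$ lying in weights $\lambda - k\alpha_2$ (produced from $v$ by $\exp(t_{\ol{2}}f_2)$) and the component in weight $\lambda+\alpha_1+\alpha_2$ cannot be carried back to weight $\lambda$ by $\exp(t_{\ol{1}}f_1)$, which shifts weights only by multiples of $\alpha_1$, so they contribute nothing. Hence $\Delta_{(V,\lambda)}(g) = h^\lambda\big(1 + t_1 t_{\ol{1}}\, h^{\alpha_1} + t_1 t_2 t_{\ol{1}} t_{\ol{2}}\, h^{\alpha_1+\alpha_2}\big)$.

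Finally I would evaluate $x_{\Qrep} = x_1^{-1} x_2^{r-1}(1 + \hat{y}_1 + \hat{y}_1 \hat{y}_2)$ on the same $g$. By Lemma~\ref{lemma:coefficients_values} we have $x_i(g) = h^{\omega_i}$, and substituting the values of $z_i(g)$, $z_{\ol{\imath}}(g)$ into the definition $\hat{y}_j = \prod_i x_i^{b_{ij}}$ for the exchange matrix $\Bdp$ of the $r$-Kronecker quiver gives $\hat{y}_i(g) = t_i t_{\ol{\imath}}\, h^{\alpha_i}$ — the same identity underlying Equation~\eqref{eq:typeAyhat}. Since $x_1^{-1} x_2^{r-1}$ evaluates to $h^{-\omega_1+(r-1)\omega_2} = h^\lambda$, we obtain $x_{\Qrep}(g) = h^\lambda\big(1 + t_1 t_{\ol{1}}\, h^{\alpha_1} + t_1 t_{\ol{1}} t_2 t_{\ol{2}}\, h^{\alpha_1+\alpha_2}\big)$, matching the expression for $\Delta_{(V,\lambda)}(g)$. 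The only real obstacle is the bookkeeping in the weight-space computation of $\Delta_{(V,\lambda)}(g)$ — confirming that no stray weight component is missed — since once that expression is in hand the comparison with $x_{\Qrep}$ is immediate, and no structural input beyond what is already used in Proposition~\ref{prop:conjholds} is needed.
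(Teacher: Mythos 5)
Your proof is correct and follows the same approach as the paper's: evaluate both $\Delta_{(V,\lambda)}$ and $x_{\Qrep}$ on a generic factored element, use extremality of $v$, $e_1 v$, $e_2 e_1 v$ to control the $\mathfrak{sl}_2$-root strings, and then compare with $\hat{y}_i(g) = t_i t_{\ol{\imath}} h^{\alpha_i}$. You are in fact slightly more explicit than the paper at the "reverse" step, spelling out why the $f_2^k v$ terms with $k\ge 1$ and the $e_2 e_1 v$ term cannot feed back into the $\lambda$-weight space under $\exp(t_{\ol{1}} f_1)$; the paper only says ``reversing the above analysis.''
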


  \begin{proof}
  As usual, we factor a generic element of $G^{c,c^{-1}}$ as 
  \[
    g = x_{\ol{1}}(t_{\ol{1}}) x_{\ol{2}}(t_{\ol{2}})hx_2(t_2)x_1(t_1).
  \]
  By hypothesis, any $v \in V_{\grep(\Qrep)}$ is a lowest-weight vector for the coroot subgroup $\varphi_1(SL_2) \subset G$.
  Denoting the positive Chevalley generators of the Lie algebra of $G$ by $e_1$, $e_2$, we thus have
  \[
    x_1(t_1)v = v + t_1e_1v.
  \]
  Since $v$ is extremal and $e_1v = \ol{s_1}v$ has weight $s_1((r-1) \omega_2 - \omega_1) = \omega_1 - \omega_2$, $e_1v$ is a lowest-weight vector for $\varphi_2(SL_2)$.
  As $v$ is highest-weight for $\varphi_2(SL_2)$, we have
  \[
    x_2(t_2)x_1(t_1)v = v + t_1e_1v + t_1t_2e_2e_1v.
  \]
  Inspecting the weights involved, we see that
  \[
    hx_2(t_2)x_1(t_1)v = h^{(r-1) \omega_2 - \omega_1}v + h^{\omega_1-\omega_2}t_1e_1v + h^{\omega_2 - (r-1)\omega_1}t_1t_2e_2e_1v.
  \]
  Reversing the above analysis we find that the component of $gv$ of weight $(r-1)\omega_2 - \omega_1$ is
  \[
    (h^{(r-1) \omega_2 - \omega_1} + h^{\omega_1-\omega_2}t_1t_{\ol{1}} + h^{\omega_2 - (r-1)\omega_1}t_1t_{\ol{1}}t_2t_{\ol{2}})v.
  \]
  By definition this is $\Delta_{(V,\,\grep(M))}(g)v$.
  On the other hand, using Lemma~\ref{lemma:coefficients_values} and the definition of $\hat{y}_1$, $\hat{y}_2$ one can check that it is also equal to the value of $x_M$ on $g$, hence $x_M$ is equal to the restriction of $\Delta_{(V,\,\grep(M))}$.
  \end{proof}

\bibliographystyle{amsalpha}
\bibliography{bibliography}

\end{document}